%Wersja z 24 lipca, z oddzieloną bibliografią, nowym rysunkiem 21
\documentclass[]{elsarticle}
\bibliographystyle{num}
%\bibliographystyle{elsarticle-num}
%\bibliographystyle{elsart-num-sort}
%\bibliographystyle{plain}
%\bibliographystyle{model1-num-names}

%\usepackage[T1]{fontenc}
%\usepackage[cp1250]{inputenc}
%\sloppy
\usepackage{amscd,amssymb,amsxtra}
\usepackage{amsfonts}
\usepackage{amsmath}
\usepackage{graphicx}
\usepackage{epsfig}
\usepackage{amssymb}
\usepackage{amstext}

\usepackage{tkz-berge}

\usepackage{calc}
\usepackage{tikz-cd}

\usetikzlibrary{positioning}
\usetikzlibrary{calc}

\usepackage{enumerate}

\newtheorem{theorem}{Theorem}[section]
\newtheorem{lemma}{Lemma}[section]

\newdefinition{remark}{Remark}[section]
\newdefinition{corollary}{Corollary}[section]
\newdefinition{definition}{Definition}[section]
\newdefinition{example}{Example}[section]

\newproof{proof}{Proof}

\def\afrac#1#2{\ifinner {#1}/{#2} \else \frac{#1}{#2} \fi}
\def\1#1{\mbox{\rm{#1}}}

\overfullrule5pt
\begin{document}

\def\kob{circle (3pt)} % kółko białe

\def\ticircw{\tikzset{VertexStyle/.style = {
shape = circle,
fill = black,
inner sep = 0pt,
outer sep = 0pt,
minimum size = 0pt,
draw}}
}

\def\barc{\begin{tabular}{c}} % tabelka do obrazkow
\def\earc{\end{tabular}}

\begin{frontmatter}

\title
{Hamiltonian cycles in some family of cubic $3$-connected plane graphs.}
\author{Jan~Florek}
%\ead{jan.florek@ue.wroc.pl}
\ead{jan.florek@pwr.edu.pl}

\address{Faculty of Pure and Applied Mathematics,
 Wroclaw University of Science and Technology,
 Wybrze\.{z}e Wyspia\'nskiego 27,
50-370 Wroc{\l}aw, Poland}
%\\
%Institute of Mathematics and Cybernetics,
% University of Economics,\\  ul. Komandorska 118/120,  53--345 Wroc{\l}aw,  Poland}

\begin{abstract}
Barnette conjectured that all cubic $3$-connected plane graphs with maximum face size at most $6$ are hamiltonian. We provide a method of construction of a hamiltonian cycle (in dual terms) in an arbitrary cubic, $3$-connected plane graph possessing such a face $g$ that every face incident with $g$ has at most $5$ edges and every other face has at most $6$ edges.
\end{abstract}

\begin{keyword}
%Keywords 
Barnette’s conjecture \sep Hamilton cycle \sep
Induced tree
%Mathematics Subject Classification 
\MSC[2010]{05C45 \sep 05C05}
\end{keyword}

\end{frontmatter}

\section{Introduction}

For basic definitions and terminology we refer to \cite{flo4} and \cite{flo5} .Let $V(G)$ be the set of all vertices of a plane graph $G$. The set $\mathbb{R}^2 \setminus G$ is open, and its regions are \textit{faces} of $G$. We say that $U \subset V(G)$ \textsl{dominates} all faces of $G$ if every face of $G$ is incident with a vertex belonging to $U$. $G[U]$ denotes the subgraph of $G$ induced by $U$. If $a,b$ are vertices, then $d(a,b)$ denotes the length of the shortest path from $a$ to $b$ in $G$. For a fixed vertex $g \in V(G)$, $V^n(G,g)$ denotes the set of all $v \in V(G)$ such that $d(v, g) = n$.

By a  \textit{plane triangulation} we will mean a simple plane triangulation with at least four vertices.
Hence, it is $3$-connected. Suppose that a pair $(G, g)$ (or $(H, h)$) denotes a plane triangulation $G$ (or $H$) with a fixed vertex $g$ (or~$h$, respectively) belonging to its outer cycle.  We say that $(G, g)$ and $(H ,h)$ are \textit{op-equivalent}
(then we write $(G, g) =(H ,h)$) if there exists an isomorphism $\varphi: G - g \rightarrow H - h$ which preserves the counterclockwise orientation. More precisely: we require that if $x_1 \ldots x_m$ is the outer cycle of the graph $G - g$ with the counterclockwise orientation, then $\varphi(x_1) \ldots \varphi(x_m)$ is the outer cycle of the graph $H - h$ with the counterclockwise orientation. We say that $(G, g)$ and $(H, h)$ are \textit{different} if they are not op-equivalent (then we write $(G, g) \neq (H ,h)$).  The \textsl{height} of $(G, g)$ (denoted as $h(G)$) is the maximal number $n$ such that $V^n(G, g) \neq \emptyset$.

Barnette \cite[Conjecture 5]{flo15}  conjectured that all cubic $3$-connected plane graphs with maximum face size at most $6$ are hamiltonian (see also Gr\"{u}nbaum and Walther \cite{flo13}, p. 380).  Aldred, Bau, Holton and McKay \cite{flo1} used computer search to confirm Barnette's conjecture for graphs up to 176 vertices. Goodey \cite{flo9} (or \cite{flo10}) proved that the hypothesis is satisfied for all such graphs having only faces bounded by $4$ or $6$ edges (by $3$ or $6$ edges, respectively). We give two examples of non-hamiltonian $3$-connected essentially $4$-edge-connected cubic plane graphs (see Fig.~1).  The first one, constructed by Grinberg \cite{flo11} and Tutte \cite{flo12}, has three non adjacent faces bounded by $8$ edges, and other faces bounded by $5$ or $6$ edges. The second one, constructed by Faulkner and Younger \cite{flo6}, has two non adjacent faces bounded by $11$ edges, and other faces bounded by $4$ or $5$ edges. Tutte \cite{flo18} showed, on the other hand, that every $4$-connected plane graph is hamiltonian. Thomassen \cite{flo17} found a shorter proof of this theorem.

%********************************************

\begin{figure}[tbhp!]
\centering

\begin{tabular}{cc}
\begin{tikzpicture}[scale=0.5, line width=0.8pt]

\draw (-4,-4) -- (4,-4) -- (4,4) -- (-4,4) --cycle;
\draw (-3,-3) -- (3,-3) -- (3,3) -- (-3,3) --cycle;
\draw (-4,-4) -- (-3,-3);
\draw (4,-4) -- (3,-3);
\draw (4,4) -- (3,3);
\draw (-4,4) -- (-3,3);
\draw (0,4) -- (0,3);
\draw (-3,0) -- (-2,0);
\draw (3,0) -- (2,0);
\draw (-2,-3) -- (-2,3);
\draw (2,-3) -- (2,3);

\draw (-1,-3) -- (-1,-2);
\draw (-1,-1) -- (-1,1);
\draw (-1,2) -- (-1,3);
\draw (1,-3) -- (1,-2);
\draw (1,-1) -- (1,1);
\draw (1,2) -- (1,3);

\draw (0,-2) -- (0,-1);
\draw (0,2) -- (0,1);

\draw (-1,-2) -- (1,-2);
\draw (-1,-1) -- (1,-1);
\draw (-1,0) -- (1,0);
\draw (-1,1) -- (1,1);
\draw (-1,2) -- (1,2);

\coordinate  (A) at ($(-1,-2)+ (135:0.7071cm)$);

\draw (-1,-2) -- (A) -- (-1,-1);
\draw (-2,-1.5) -- (A);

\coordinate  (B) at ($(1,-2)+ (45:0.7071cm)$);

\draw (1,-2) -- (B) -- (1,-1);
\draw (2,-1.5) -- (B);

\coordinate  (C) at ($(1,2)+ (-45:0.7071cm)$);

\draw (1,2) -- (C) -- (1,1);
\draw (2,1.5) -- (C);

\coordinate  (D) at ($(-1,2)+ (-135:0.7071cm)$);

\draw (-1,2) -- (D) -- (-1,1);
\draw (-2,1.5) -- (D);
\end{tikzpicture}

&

\hspace{1cm}
\begin{tikzpicture}[scale=0.5, line width=0.8pt]

\draw (2,0) -- (2,4) -- (0,5) -- (-2,4) -- (-2,0) -- (-1,-2) -- (1,-2)  --cycle;
\draw (6,0) -- (6,4) -- (0,7) -- (-6,4) -- (-6,0) -- (-2,-4) -- (2,-4) -- cycle;

\draw (1,-2) -- (2,-4);
\draw (-1,-2) -- (-2,-4);
\draw (0,5) -- (0,7);

\draw (2,0) -- (6,0);
\draw (2,1) -- (6,1);
\draw (3,2) -- (5,2);
\draw (2,3) -- (6,3);
\draw (2,4) -- (6,4);

\draw (3,1) -- (3,3);
\draw (5,1) -- (5,3);
\draw (4,0) -- (4,1);
\draw (4,3) -- (4,4);

\draw (-2,0) -- (-6,0);
\draw (-2,1) -- (-6,1);
\draw (-3,2) -- (-5,2);
\draw (-2,3) -- (-6,3);
\draw (-2,4) -- (-6,4);

\draw (-3,1) -- (-3,3);
\draw (-5,1) -- (-5,3);
\draw (-4,0) -- (-4,1);
\draw (-4,3) -- (-4,4);

\end{tikzpicture}
\\
(a) & \hspace{1cm}(b)
\end{tabular}

\label{Fig.1}

%Fig.1.
\caption{ (a) The Grinberg and Tutte graph, and (b) the Faulkner and Younger graph}

\end{figure}
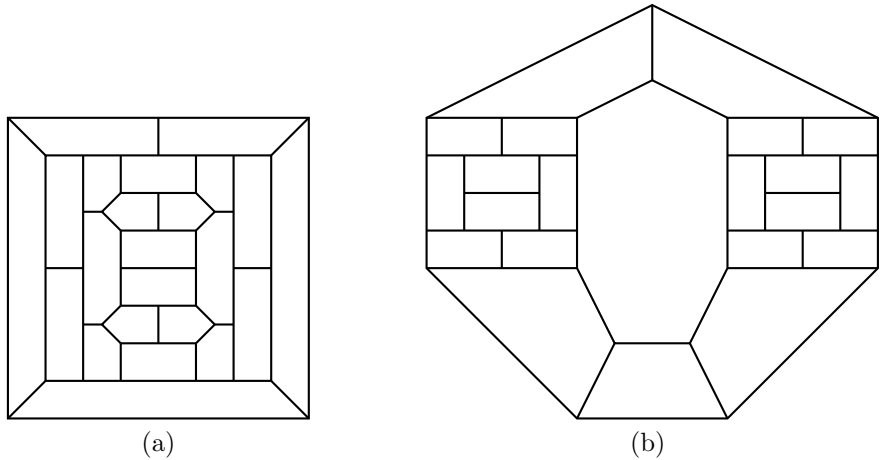

%******************************************************************

It is known (see Stein \cite{flo16},  Florek \cite{flo7},  Alt, Payne, Schmidt and Wood \cite{flo2}) that the dual version of the Barnette's conjecture is the following: Every plane triangulation with all vertices of degree at most $6$ has an induced tree with the vertex set dominating all faces of the graph. Let $\cal{G}$ be the set of all plane triangulations $(G,g)$ such that every vertex of $G$ different than $g$ has degree at most $6$, and every vertex adjacent to $g$ has degree at most $5$. In this article we prove that every graph in $\cal{G}$ has an induced tree with the vertex set dominating all faces of the graph.

Indeed we will prove more. Let  $(G, g) \in \cal{G}$, and let $c(G) = x_1,\ldots,x_m$ be the outer cycle of  $G - g$ with the counterclockwise orientation (indexed by elements of the cyclic group $\{1, 2, \ldots, m\}$ with the addition modulo $m$).  We say  that $U \subset V(G)$ is \textsl{hamiltonian} in the graph $(G, g)$  if $g \notin U$, $U$ induces a tree and dominates all faces of $(G, g)$.  Hence,  if $x_{i} \notin U$,  then $x_{i-1}$ and $x_{i+1} \in U$. We say that $U$ is \textsl{compatible} with the orientation of $c(G)$ if for every $1 \leqslant i \leqslant m$ the following implication is satisfied: if $x_{i} \notin U$ and  $d_{G}(x_{i}) \leqslant 4$, then $x_{i+2} \in U$. Let $(P, g)$, $(Q, g) \in \cal{G}$ be graphs defined as the graphs $P - g$, $Q - g$ in Fig.~2. We prove the following theorem (see Theorem~\ref{theo3.2}): If $(G, g) \in \cal{G}$ is different than $(P, g)$ and $(Q, g)$, then it has a hamiltonian set of vertices which is compatible with the orientation of $c(G)$. In fact, the proof contains a construction of such hamiltonian set in $(G, g)$  by induction with respect to the height $h(G)$. Notice that $(P, g)$ (or $(Q, g)$) has a hamiltonian set of vertices, marked by circles in Fig.~2, but the set is not compatible with the orientation of $c(P)$ (or $c(Q)$, respectively).

Barnette's other hamiltonicity conjecture \cite{flo3} states that all  cubic $3$-connected bipartite plane graphs are hamiltonian (see also Holton, Manvel and  McKay~\cite{flo14}, or Florek \cite{flo7}, \cite{flo8}).

%**************************************************************

%**************************************************************

\section{Generating the family $\cal{G}$}
A simple $2$-connected plane graph with a distinguished face is called a plane \textsl{semi-triangulation} if  every other face, called \textsl{proper face}, is bounded by $3$ edges. If $F$ is a plane semi-triangulation, then $\sigma(F)$ denotes the cycle which bounds the distinguished face. The cycle  $\sigma(F)$ separates the plane into two regions on the plane: the distinguished face, and the other region called the \textsl{proper region} of~$F$. We will make the following assumption: if the proper region of~ $F$ is bounded (unbounded), then $\sigma(F)$ has the counterclockwise (clockwise, respectively) orientation. A plane semi-triangulation is called a \textsl{configuration} of a plane graph~$G$ if it is a subgraph of $G$, and its every proper face is a face of~$G$. Let $\cal{F}$ be a family of all plane semi-triangulations.

A graph is \textsl{generated} by a set of operations  from a starting graph in a class of graphs if it can be constructed by a sequence of these operations from the starting graph and the class is closed under the construction operations. We assume that the starting graph is generated from itself (by the empty set of operations). We give a definition of a replacement operation which replaces a configuration of a plane triangulation by another plane semi-triangulation to obtain a new plane triangulation (Definition \ref{def2.1}--\ref{def2.2}). Next we define an operation~$A$ (Definition \ref{def2.4}) and operations $B$,~$C$ (Definition 2.3) which map graphs of $\cal{G}$, under some conditions, into graphs belonging  to $\cal{G}$. In Theorem~\ref{theo2.1} we prove that every graph in the class $\cal{G}$ is generated by operations of type $A$, $B$ and $C$ from one of graphs: $(G_3, g)$, or $(F_n, g) \in \cal{G}$, $n \geqslant 1$, as the starting graph (Definition \ref{def2.5}--\ref{def2.6}).

\begin{definition}\label{def2.1} If  $\Delta$ is a plane semi-triangulation, $d$ is a fixed vertex of $\sigma(\Delta)$, and $D$ is a fixed  subset of $V(\Delta)$, then a triple $[\Delta, d, D]$ (or $[\Delta, d]$ for $D = \emptyset$) is called a \emph{pattern}.

We say that a patern $[\Delta, d, D]$ is \textsl{$\sigma$-compatible} with a patern $[\Gamma, g, W]$ if there exists an isomorphism $\varphi: \sigma(\Delta)\rightarrow \sigma(\Gamma)$ such that $\varphi(\sigma(\Delta))$ and $\sigma(\Gamma)$ have the same orientation, $\varphi(d) = g$, $\varphi(D\cap V(\sigma(\Delta)) = W \cap V(\sigma(\Gamma))$, and,  if $x y$ is a chord of $\sigma(\Delta)$, then $\varphi(x)\varphi(y)$ is a chord of $\sigma(\Gamma)$.

We write $[\Delta, d, D] = [\Gamma, g, W]$ if there exists an isomorphism $\varphi\colon  \Delta \rightarrow \Gamma$ such that $\varphi(\sigma(\Delta))$ and $\sigma(\Gamma)$ have the same orientation, $\varphi(d) = g$ and $\varphi(D) = W$.

\end{definition}
\begin{definition}\label{def2.2} Let $(G, g)$ be a plane triangulation, % with a fixed vertex $g$,
and let $W$ be a fixed  subset of $V(G)$. Suppose that $[\Gamma_i, g, W \cap V(\Gamma_i)]$, $i \in I$, is a family of patterns such that $\Gamma_i$ is a configuration of $G$, $g \in \sigma(\Gamma_i)$, and the proper regions of $\Gamma_i$ are pairwise disjoint. Assume also that, for every $i \in I$,  $[\Delta_i, d_i, D_i]$ is a pattern which is $\sigma$-compatible with the pattern $[\Gamma_i, g, W \cap V(\Gamma_i)]$. We define a new plane triangulation, denoted $(\gamma(G), g)$, and a set of vertices $\gamma(W)$ in the  following way. For each $i \in I$, we replace $[\Gamma_i, g, W \cap V(\Gamma_i)]$ by $[\Delta_i, d_i, D_i]$; that is, we delete from the graph $G$ all the vertices and edges of $\Gamma_i$ which do not belong to $\sigma(\Gamma_i)$, add the graph $\Delta_i$, and identify cycles $\sigma(\Gamma_i)$ and $\sigma(\Delta_i)$ according to their $\sigma$-compatibility. Then we say that $(\gamma(G),g)$ is obtained from $(G,g)$ by the \textsl{replacement operation} $\gamma$ which we write down symbolically
$$\gamma:  [\Gamma_i, g, W\cap V(\Gamma_i)]   {\xrightarrow[]{i \in I}}  [\Delta_i, d_i, D_i].$$ % : (G,g) \rightarrow (H,g) \mbox{ and } W \rightarrow X.$$
Notice that, for each $i \in I$, we replace $W \cap V(\Gamma_i)$ by $D_i$, and we obtain the following set $\gamma(W) \subset V(\gamma(G))$
$$\gamma(W)= (W \setminus {\bigcup_{i \in I} V(\Gamma_i)}) \cup {\bigcup_{i \in I} D_i}.$$
\end{definition}

We recall that if $(G,g) \in \cal{G}$, then $c(G)$ denotes the outer cycle of the graph $G - g$ with the counterclockwise orientation.
\begin{definition}\label{def2.3} Assume that $(G,g) \in \cal{G}$ and $c(G) = x_1 \ldots x_m$.

(a) Suppose that there is an edge $x_{i}x_{i+1}$ such that $d_G(x_{i})$ and $d_G(x_{i+1})\leqslant~4$. Let  $\Gamma_i$ be a configuration of $G$ such that $x_ix_{i+1}g$ is the only proper face of $\Gamma_i$. Suppose that $[\Delta_1, d]$ is the pattern defined in Fig.~3. Since  $[\Delta_1, d]$ is $\sigma$-compatible with $[\Gamma_i, g]$, we define a replacement operation
$$B_i : [\Gamma_i, g] \rightarrow  [\Delta_1, d]$$
Hence, we obtain a plane triangulation $(B_i(G),g) \in \cal{G}$ from $(G,g)$.

(b) Suppose that there is an edge $x_{i}x_{i+1}$ and a vertex $y_i \in  V^2(G, g)$  such that $x_ix_{i+1}y_i$ is a face in $G$ and $d_G({y_i}) \leqslant 5$. Let  $\Gamma_i$ be a configuration of $G$ such that $x_ix_{i+1}g$ and $x_ix_{i+1}y_i$ are the only proper faces of $\Gamma_i$.  Suppose that $[\Delta_2, d]$ is the pattern defined in Fig.~3. Since $[\Delta_2, d]$ is $\sigma$-compatible with $[\Gamma_i, g]$, we define a replacement operation
$$C_i : [\Gamma_i, g] \rightarrow  [\Delta_2, d].$$
Hence, we obtain a plane triangulation $(C_i(G),g) \in \cal{G}$ from $(G,g)$.
 
(c) Suppose that there is an edge $x_{i}x_{i+1}$, and vertices $y_{i}$, $y_{i+1} \in V^{2}(G, g)$ such that $x_{i}x_{i+1}y_i$, $y_{i}y_{i+1}x_{i+1}$ are faces in $G$, and $d_G({y_i})$, ${d_G({y_{i+1}}) \leqslant 5}$. Let  $\Gamma_i$ be a configuration of $G$ such that $x_{i}x_{i+1}g$, $x_{i}x_{i+1}y_i$, $y_{i}y_{i+1}x_{i+1}$ are the only proper faces of $\Gamma_i$. Suppose that $[\Delta_3, d]$ is the pattern defined in Fig.~3. Since $[\Delta_3, d]$ is $\sigma$-compatible with $[\Gamma_i, g]$, we define a replacement operation
$$\bar{B}_i : [\Gamma_i, g] \rightarrow  [\Delta_3, d].$$
Hence, we obtain a plane triangulation $(\bar{B}_i(G),g) \in \cal{G}$ from $(G,g)$ (see Fig.~7).

(d) Suppose that there is an edge $x_{i}x_{i+1}$, vertices $y_i$, $y_{i+1} \in V^{2}(G, g)$, and a vertex $v_i \in V^3(G, g)$ such that $x_ix_{i+1}y_i$, $y_iy_{i+1}x_{i+1}$, $y_iy_{i+1}v_i$ are faces in $G$ and $d_G({v_i}) \leqslant 5$. Let~$\Gamma_i$ be a configuration of $G$ such that $x_ix_{i+1}g$, $x_ix_{i+1}y_i$, $y_iy_{i+1}x_{i+1}$, $y_iy_{i+1}v_i$ are the only proper faces of $\Gamma_i$. Suppose that $[\Delta_4, d]$ is the pattern defined in Fig.~3. Since $[\Delta_4, d]$ is $\sigma$-compatible with $[\Gamma_i, g]$, we define a replacement operation
$$\bar{C}_i : [\Gamma_i, g] \rightarrow  [\Delta_4, d].$$
Hence, we obtain a plane triangulation $(\bar{C}_i(G),g) \in \cal{G}$ from $(G,g)$ (see Fig.~8).

The operation $B_i$, $C_i$, $\bar{B}_i$ and $\bar{C}_i$ depends on the choice of the edge $x_ix_{i+1}$.  Any such operation will be called of \textsl{type} $B$, $C$, $\bar{B}$ and $\bar{C}$, respectively, and the index $i$ will be omitted.
\end{definition}

%*********************************************
%**********************************************************

%\begin{remark}\label{rem2.1}
%Notice that if $(G, g) \in \cal{G}$, then $(B(G),g)$, $(C(G),g)$, $(\bar{B}(G),g)$ and %${(\bar{C}(G),g) \in \cal{G}}$.
%\end{remark}

%The bounded and the unbounded regions into which a cycle separates the plane is called the \textsl{interior} and the \textsl{exterior} of this cycle, respectively.
\begin{definition}\label{def2.4} Let $(G,g) \in \cal{G}$ and suppose that $c(G) = y_1 \ldots y_m$.

(a) We first delete all edges $gy_i$ from $G$. Next, we add a cycle $x_1 \ldots x_m$, with the counterclockwise orientation, in such a way that $G - g$  is contained in the  bounded, and $g$ in the unbounded region of the cycle. At last, we add edges $x_{i}y_i$, $x_{i+1}y_{i}$, $gx_i$, for $1\leqslant i \leqslant m$. We obtain a plane triangulation, denoted  as $A(G)$, such that $(A(G),g) \in \cal{G}$.
 Hence, we define an operation $A : {\cal{G}} \rightarrow {\cal{G}} : (G,g) \rightarrow  (A(G),g)$. We denote $A(A(G))$ briefly by $A^2(G)$.

(b)
For $h(G) > 1$, $G^{-1}$ is a plane graph obtained from $G$ by deleting all vertices of $c(G)$ and adding all edges $gv$, for $v~\in V^2(G, g)$ . Let us observe that $G^{-1}$ may be not simple. If $(G^{-1},g) \in \cal{G}$ we  define $G^{-2} = (G^{-1})^{-1}$.
\end{definition}
\begin{definition} \label{def2.5}
We define a graph $(G_n, g) \in \cal{G}$, for $n \geqslant 3$, (or $(J, g) \in \cal{G}$) as the graph $G_n - g \in \cal{F}$ (or $J - g \in \cal{F}$) shown in Fig.~4.
\end{definition}

%*************************************************************

%************************************************************
\begin{definition} \label{def2.6} We define a graph $(F_n,g) \in \cal{G}$, for $n\geqslant 1$, as the graph ${F_n -g \in \cal{F}}$ shown in Fig.~5.

Let ${\cal{G}}_n$, for $n \geqslant 1$, (or ${\cal{G}}_0$) be the family of all graphs in ${\cal{G}}$ which are generated by operations of type $A$, $B$, $C$ from the graph $(F_n,g)$ (or $(G_3, g)$, respectively).
\end{definition}
%*************************************************
%*****************************************************
\begin{example}\label{exam2.1}
Notice that $(P, g)$ (or $(Q, g)$) is generated by operations of type $C$ from $(A(G_3), g)$ (or $(A(F_1), g)$, respectively).
\end{example}

\begin{lemma}\label{lem2.1}
 If $(G,g) \in \cal{G}$ and all vertices of the cycle $c(G)$ are of degree $5$, then one of the following conditions is satisfied:
\begin{enumerate}
\item [(1)]
$(G^{-1},g) \in \cal{G}$ and  $(G,g) = (A(G^{-1}),g)$,
\item [(2)]
$(G, g) = (F_n, g)$ for some $n \geqslant 3$.
\end{enumerate}
\end{lemma}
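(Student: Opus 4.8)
The plan is to read off the local structure of $G$ along $c(G)$, separating one ``generic'' configuration (which yields (1)) from a short list of rigid degenerate ones (which yield (2)); so first I would fix notation and the local picture. Write $c(G)=x_1\ldots x_m$. Since $g$ lies on the outer cycle of the triangulation $G$ while $c(G)$ is the outer cycle of $G-g$, we have $N_G(g)=\{x_1,\ldots,x_m\}$ and the faces at $g$ are the triangles $gx_ix_{i+1}$; in particular $h(G)\geqslant 2$, since otherwise $d_G(x_i)=3$. In $G-g$ the vertex $x_i$ has degree $d_G(x_i)-1=4$, so besides $x_{i-1},x_{i+1}$ it has exactly two further (``inner'') neighbours. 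Let $a_i$ be the third vertex of the face of $G-g$ incident with the edge $x_{i-1}x_i$ on its inner side. Reading the rotation at $x_i$ gives: the inner neighbours of $x_i$ are exactly $a_i$ and $a_{i+1}$; the edge $a_ia_{i+1}$ is present (it closes the face of $G$ between $a_i$ and $a_{i+1}$ at $x_i$); and $a_ix_{i-1}x_i$ is a face of $G$. Since every vertex at distance $2$ from $g$ is an inner neighbour of some $x_j$, one gets $V^2(G,g)=\{a_1,\ldots,a_m\}$; and if the $a_i$ are pairwise distinct and none of them lies on $c(G)$, then $a_i$ is adjacent in $c(G)$ only to $x_{i-1}$ and $x_i$ (from $a_i\sim x_k$ one deduces $a_i\in\{a_k,a_{k+1}\}$, whence $k\in\{i-1,i\}$).

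Next I would treat the generic case: the $a_i$ are pairwise distinct and none lies on $c(G)$. Then $a_1a_2\ldots a_m$ is a cycle, and the closed region of $G$ between $c(G)$ and this cycle is tiled by the $2m$ triangular faces $x_{i-1}x_ia_i$ and $x_ia_ia_{i+1}$ -- which is exactly the annulus that the operation $A$ attaches to an outer cycle. Hence deleting $V(c(G))$ and joining $g$ to every $a_i$ produces $G^{-1}$ with $G=A(G^{-1})$. To see $(G^{-1},g)\in\cal{G}$: $G^{-1}$ is a plane triangulation (the interior of $a_1\ldots a_m$ is inherited from $G$, the exterior is the fan of triangles $ga_ia_{i+1}$) with $|V(G^{-1})|\geqslant 1+m\geqslant 4$; every vertex of $G^{-1}$ other than $g$ keeps its $G$-degree or, being some $a_i$, has degree $d_G(a_i)-1\leqslant 5$; and the neighbours of $g$ in $G^{-1}$, namely $a_1,\ldots,a_m$, have degree $d_G(a_i)-1\leqslant 5$. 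So $(G^{-1},g)\in\cal{G}$ and $(G,g)=(A(G^{-1}),g)$, which is alternative (1).

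Finally I would handle the degenerate case: some $a_i$ lies on $c(G)$ (equivalently, $c(G)$ has a chord), or two of the $a_i$ coincide. The plan is to show that the degree hypotheses -- $d_G(x_i)=5$ for all $i$, $d_G(v)\leqslant 6$ for $v\neq g$, $d_G(v)\leqslant 5$ for $v$ adjacent to $g$ -- make this very rigid, the surviving graphs being exactly the $(F_n,g)$, $n\geqslant 3$, of Fig.~5. For a coincidence $a_p=a_q=:v$ with $p,q$ non-consecutive, $v$ is already adjacent to the four distinct vertices $x_{p-1},x_p,x_{q-1},x_q$ plus the walk-neighbours $a_{p\pm1},a_{q\pm1}$, of which $a_{p-1}$ and $a_{p+1}$ are distinct; so $d_G(v)\leqslant 6$ forces the remaining walk-neighbours into $\{a_{p-1},a_{p+1}\}$, which generates further coincidences and, by iterating and using planarity (three vertices -- two such inner ones and $g$ -- cannot all be adjacent to the whole cycle $c(G)$), closes up only when $m=4$, giving a single $8$-vertex graph. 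A chord $x_px_q$ of $c(G)$ instead bounds a ``cap'', a sub-triangulation in which every intermediate vertex $x_{p+1},\ldots,x_{q-1}$ still has degree $5$ in $G$; its shape is again pinned down by the same degree bounds up to a size parameter, yielding the remaining graphs of Fig.~5. Comparing the cyclic degree sequence along $c(G)$, the chords of $c(G)$, and the subgraph $G[V^2(G,g)]$ with those of $(F_n,g)$ then identifies $(G,g)$ with some $(F_n,g)$, $n\geqslant 3$ -- alternative (2).

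The routine part is the generic case, essentially an unwinding of the definitions of $A$ and $G^{-1}$. The main obstacle is the degenerate case: one must verify that the coincidence- and chord-configurations compatible with the three degree bounds are exhausted by the family $F_n$, $n\geqslant 3$ -- so that no sporadic triangulation slips through -- and then match the resulting graphs to those of Fig.~5 up to op-equivalence.
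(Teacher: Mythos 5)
Your generic case (all $a_i$ pairwise distinct and none on $c(G)$) is correct and is exactly the paper's alternative (1); the unwinding of the definitions of $A$ and $G^{-1}$ and the degree bookkeeping are fine. The problems are concentrated in the degenerate case, and they are not just missing details: two of your structural claims there are false. First, the case ``some $a_i$ lies on $c(G)$'' yields no graphs at all --- it is impossible, and proving this is the one genuinely non-routine step of the lemma. The paper's argument (its condition $(i_3)$) runs as follows: if $x_jx_{j+1}x_k$ were a face, then since $x_j$ and $x_{j+1}$ have degree $5$ (not $3$) one gets $x_{k+1}\neq x_j$ and $x_{k-1}\neq x_{j+1}$, and since $x_k$ has degree $5$ the triangles $x_{k-1}x_kx_{j+1}$ and $x_kx_{k+1}x_j$ are again faces; applying the same reasoning to $x_{k-1}x_kx_{j+1}$ in place of $x_jx_{j+1}x_k$ and iterating produces an infinite sequence of distinct faces, a contradiction. (Note also that once each $x_i$ has its five neighbours accounted for as $g,x_{i-1},x_{i+1},a_i,a_{i+1}$, a chord of $c(G)$ forces exactly such a face, so this single argument kills the whole chord case.) Your sketch instead asserts that chords produce ``the remaining graphs of Fig.~5,'' which misattributes where the family $F_n$ comes from and leaves the actually required argument untouched; you flag this yourself as the main obstacle, but the route you propose for closing it goes in the wrong direction.

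Second, your analysis of a coincidence $a_p=a_q$ begins correctly (the degree-$6$ bound on $a_p=a_q$ forces $a_{q-1}=a_{p+1}$ and $a_{q+1}=a_{p-1}$), but the conclusion that the iteration ``closes up only when $m=4$, giving a single $8$-vertex graph'' is wrong. The identification $a_j=a_{p+q-j}$ propagates consistently all the way around the cycle for every admissible $m\geqslant 4$: the vertices $a_1,\ldots,a_m$ collapse onto an induced path, the two folding ends are forced to have degree $3$ (the paper's $(i_5)$), and the resulting graph is precisely $(F_n,g)$ with $n\geqslant 3$ equal to the number of distinct $a_i$. Thus the entire infinite family $F_n$, $n\geqslant 3$, arises from the coincidence case alone, and nothing remains to be supplied by the (vacuous) chord case. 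To repair the proof you need (a) the spiralling-faces contradiction above to eliminate $a_i\in c(G)$, and (b) to carry the folding argument around the whole cycle for general $m$ and match the outcome to the graphs of Fig.~5, rather than stopping at $m=4$.
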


\begin{proof}
Suppose that $c(G) = {x_1}\ldots{x_m}$. Let us denote by $y_i \neq g$, $1 \leqslant i \leqslant m$, a vertex of the graph $G$ such that
\begin{enumerate}
\item [($i_1$)]
 $x_{i}x_{i+1}y_{i}$ is a face in $G$.
\end{enumerate}
Since  $x_{i+1}$ has degree $5$, for $1 \leqslant i \leqslant m$, we have:
\begin{enumerate}
\item [($i_2$)]
 $y_iy_{i+1}x_{i+1}$ is a face in $G$.
\end{enumerate}
We will first prove the following condition:
\begin{enumerate}
\item [($i_3$)]
 $\{x_i: 1 \leqslant i \leqslant m \} \cap \{y_i: 1 \leqslant i \leqslant m \}= \emptyset$.
\end{enumerate}
Suppose, on the contrary, that $y_j = x_k$ for some $j, k$. Hence, by~$(i_1)$, $x_{j}x_{j+1}x_{k}$ is a face in $G$. Then,  $x_{k+1} \neq x_j$ and $x_{k-1} \neq x_{j+1}$, because $x_j$ and $x_{j+1}$ doesn't have degree $3$ in $G$. Hence, $x_{k-1}x_kx_{j+1}$ and $x_kx_{k+1}x_j$ are faces in $G$, because $x_k$ has degree $5$.  The same is true for $x_{k-1}x_kx_{j+1}$ in place of $x_{j}x_{j+1}x_{k}$. Thus,  $x_{j+1}x_{j+2}x_{k-1}$ is a face in $G$. We can continue in this fashion, to obtain an infinite sequence %$x_{j}x_{j+1}x_{k}$, $x_{j+1}x_{j+2}x_{k-1}$, $x_{j+2}x_{j+3}x_{k-2}, \ldots$
of  faces in $G$, which is a contradiction. Hence $(i_3)$ holds.

If $y_i \neq y_j$, for every $i \neq j$, then if follows from $(i_2)$ that ${y_1}\ldots{y_m}$ is a cycle. Hence $(1)$ holds.

If $y_p = y_s$, for some $p < s$, then, by $(i_2)$, $1 < s - p < m - 1$. We prove the following implications  (see Fig.~6):
\begin{enumerate}
\item [($i_4$)]
 if $y_p = y_s$, $p < s$, then $y_{p+1} = y_{s-1}$, $y_{p-1} = y_{s+1}$, and $y_p$ has degree $6$.
\item [($i_5$)]
if $y_p = y_s$ and $s - p = 2$ (or $s - p = m - 2$), then $y_{p+1}$ (or $y_{p-1}$, respectively) has degree $3$.
\end{enumerate}
Indeed, if $y_p = y_s$, then, by $(i_1)$--$(i_3)$, the vertex $y_p = y_s$ is incident with at least six faces: $y_{p-1}y_{p}x_{p}$, $x_{p}y_{p}x_{p+1}$, $x_{p+1}y_{p}y_{p+1}$ and $y_{s+1}y_{s}x_{s+1}$, $x_{s+1}y_{s}x_{s}$, $x_{s}y_{s}y_{s-1}$.  Thus, $y_{p-1}y_{p} = y_{s+1}y_{s}$ and $y_{p}y_{p+1} = y_{s}y_{s-1}$, because $y_p = y_s$ has degree at most~$6$.  Hence, $(i_4)$ holds.

If $y_p = y_{p+2}$, then, by $(i_1)$--$(i_3)$, the vertex $y_{p+1}$ is incident with exactly three faces: $y_{p}y_{p+1}x_{p+1}$, $x_{p+1}y_{p+1}x_{p+2}$, and $x_{p+2}y_{p+1}y_{p+2} =  x_{p+2}y_{p+1}y_{p}$. Thus, $y_{p+1}$  has degree $3$. Similarly, if $y_1 = y_{m-1}$ (or $y_2 = y_m$), then  $y_m$  (or $y_1$) has degree~$3$. Hence, $(i_5)$ holds.

Finally, $(2)$ follows from conditions $(i_2)$, $(i_4)$  and $(i_5)$.
\qed \end{proof}

%**********************************************

%************************************************

\begin{theorem}\label{theo2.1}
 If $(G, g) \in \cal{G}$, then it satisfies one of the following conditions:
%\begin{description}
\begin{enumerate}[\upshape(1)]
%\item{(1)} 
\item
$(G^{-1}, g) \in \cal{G}$ and $(G, g)$ is generated by operations of type $C$ and $B$ from $(A(G^{-1}), g)$,
%\item{(2)} 
\item
$(G, g)$ is generated by operations of type $C$ and $B$ from $(F_n, g)$, for some $n \geqslant 1$,
%\item{(3)} 
\item
$h(G)= 1$ and $(G, g)$ is generated by operations of type $B$ from $(G_3, g)$.
%\end{description}
\end{enumerate}
 \end{theorem}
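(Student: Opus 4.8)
The natural route is induction on $|V(G)|$, at each step deleting a single vertex of the outer cycle $c(G)=x_1\ldots x_m$ so as to reach a strictly smaller graph of $\cal{G}$ of which $(G,g)$ is the image under an operation of type $B$ or $C$; the base of the induction is $G=G_3$, the unique plane triangulation on four vertices, which is generated from itself and satisfies~(3). Two elementary facts are used throughout. First, since $G$ is a triangulation with $g$ on the outer cycle, the boundary of the outer face of $G-g$ is the cycle on $N(g)$, i.e.\ $V(c(G))=N(g)$; hence every vertex of $c(G)$ is adjacent to $g$ and so has degree at most $5$, its two cycle-neighbours and $g$ already accounting for three neighbours. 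Second, if $h(G)=1$ then $G-g$ is a triangulated polygon, so unless $G=G_3$ it has an ear and $c(G)$ contains a vertex of degree $3$; equivalently, once $c(G)$ has no vertex of degree $3$ (and $G\neq G_3$) we already know $h(G)\geqslant 2$.

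I would split the inductive step by the least degree occurring on $c(G)$. \emph{A vertex of degree $3$.} Let $v\in c(G)$ have degree $3$, with $G\neq G_3$. Then $v$ is adjacent only to $g$ and to its cycle-neighbours $x_i,x_{i+1}$, and $x_ivx_{i+1}$ is a face, so $x_ix_{i+1}$ is an edge. Set $G'=G-v$: it lies in $\cal{G}$, has $gx_ix_{i+1}$ as a face, and $d_{G'}(x_i)=d_G(x_i)-1\leqslant 4$, similarly for $x_{i+1}$, so $(G,g)=(B_i(G'),g)$; moreover $V^n$ is unchanged for $n\geqslant 2$, so $(G')^{-1}=G^{-1}$ and $h(G')=h(G)$. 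Applying the induction hypothesis to $G'$ and appending $B_i$ to its generating sequence shows $G$ satisfies the same one of~(1)--(3) as $G'$ (for~(1) one uses $(G')^{-1}=G^{-1}$; if $G'$ falls under~(3) then $h(G)=h(G')=1$, as required).

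\emph{No vertex of degree $3$ but a vertex $z$ of degree $4$.} Then $h(G)\geqslant 2$. Write $x_i,x_{i+1}$ for the cycle-neighbours of $z$ and $w$ for its fourth neighbour; the two inner faces at $z$ force $w$ adjacent to $x_i,z,x_{i+1}$, and a planarity argument exploiting the absence of degree-$3$ vertices on $c(G)$ shows $w\in V^2(G,g)$. If $x_ix_{i+1}\notin E(G)$, put $G'=(G-z)+x_ix_{i+1}$; no degree increases and $d_{G'}(w)=d_G(w)-1\leqslant 5$, so $G'\in\cal{G}$; its faces $gx_ix_{i+1}$ and $x_iwx_{i+1}$ with $w\in V^2(G',g)$ give $(G,g)=(C_i(G'),g)$, and $(G')^{-1}=G^{-1}$, $h(G')=h(G)$, so by induction $G'$---and hence $G$---satisfies~(1) or~(2). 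If instead $x_ix_{i+1}\in E(G)$, then $z$ lies in the pocket that this chord cuts off from the disc $G-g$, $w$ is the unique vertex of that pocket adjacent to $z$, and $w\in V^2$ while $x_i,x_{i+1}$ are both forced to have degree $5$; the configuration is then very rigid, and a direct inspection identifies $G$ with one of the small graphs $F_n$, $n\geqslant 1$, so~(2) holds. \emph{All vertices of $c(G)$ of degree $5$.} Now Lemma~\ref{lem2.1} applies verbatim: either $(G^{-1},g)\in\cal{G}$ and $(G,g)=(A(G^{-1}),g)$, so $G$ is generated from $A(G^{-1})$ by the empty sequence and~(1) holds; or $(G,g)=(F_n,g)$ for some $n\geqslant 3$ and~(2) holds.

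The conceptual heart is the trichotomy---reduce by $B^{-1}$, reduce by $C^{-1}$, or land in the degree-$5$ case governed by Lemma~\ref{lem2.1}---together with the fact that $B^{-1}$ and $C^{-1}$ leave both $G^{-1}$ and $h(G)$ unchanged, which is what propagates the three conditions upward. The part I expect to demand real care, and the likely main obstacle, is the middle case: checking that $C^{-1}$ returns a graph of $\cal{G}$ with no multiple edge, proving that the fourth neighbour of a degree-$4$ boundary vertex always lies in $V^2$, and verifying that the "pocket" exceptions are exactly the small members of the family $\{F_n\}$. Once these checks are settled, the induction closes.
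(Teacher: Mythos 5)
Your proposal is correct and follows essentially the same route as the paper: induction on the number of vertices, with the trichotomy (a degree-$3$ vertex on $c(G)$ removed by $B^{-1}$; a degree-$4$ vertex removed by $C^{-1}$ after locating its fourth neighbour in $V^2(G,g)$, with the chord case yielding the exceptional graphs $F_1$, $F_2$; all degrees $5$ handled by Lemma~\ref{lem2.1}), together with the observation that these reductions preserve $G^{-1}$ and the height. The paper's own proof is no more detailed on the points you flag as delicate (simplicity of $(G-x_2)+x_1x_3$, the fourth neighbour lying in $V^2$, and the identification of the chord case with $F_1$ or $F_2$), so no gap relative to the paper.
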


 \begin{proof}
Let $(G, g) \in \cal{G}$ and $c(G) = {x_1}\ldots{x_m}$. We proceed by induction with respect to the order of the graph.  By Lemma \ref{lem2.1}, it is sufficient to consider the following two cases:
\begin{enumerate}
\item [(i)] $d_G(x_2) = 3$,
\item [(ii)] $d_G(x_2) = 4$ and $d_G(x_i) \geqslant 4$, for every $i$.
 \end{enumerate}

Case (i) If $d_G(x_1) \geqslant 4$ and $d_G(x_3) \geqslant 4$, then $(G - x_2, g) \in \cal{G}$. Hence, by induction, $(G - x_2, g)$ satisfies one of the conditions (1)--(3). Since $(G^{-1},g) = ((G - x_2)^{-1}, g)$ and $(G, g) = (B(G - x_2), g)$, $(G, g)$ satisfies one of the conditions (1)--(3).
If $d_G(x_1) = 3$ or $d_G(x_3) = 3$, then $(G, g) = (G_3, g)$.

Case (ii) The vertices $x_1$, $x_2$ and $x_3$ are adjacent to a vertex $y$ different than  $g$. If $y$ is a vertex of the cycle $c(G)$, then $d_G(y) \leqslant 5$. It follows that $d_G(x_1) = 3$ or $d_G(x_3) = 3$. We obtain a contradiction, hence $y \in V^2(G, g)$.

If vertices $x_1$, $x_3$ are not adjacent, then $H = (G - x_2) + x_1x_3 \in \cal{G}$ and the vertex $y \in V^2(H, g)$. Thus, by induction, $(H, g)$ satisfies condition (1) or (2). Since $(G^{-1}, g) = (H^{-1}, g)$ and $(G, g) = (C(H), g)$, $(G, g)$ satisfies condition (1) or~(2).

If vertices $x_1$, $x_3$ are adjacent, then $(G, g) = (F_1, g)$ or $(G, g) = (F_2, g)$.
\qed \end{proof}
%From Theorem \ref{theo2.1} follows the following corollaries:
\begin{corollary}\label{coro2.1}
Let  $(G, g) \in {\cal {G}}_0$.
%\begin{description}
\begin{enumerate}
\item[(1)] If $h(G) = 1$, then it is op-equivalent to one of the following graphs:
$$(J, g), \ (G_n, g), \hbox{ or } (r(G_{2n}), g), \hbox{ for } n \geqslant 3,$$
where $r(G)$ is a mirror reflection of $G$ on the plane such that $r(g) = g$.
\item[(2)] If $h(G) = 2$, then it is generated by operations of type $C$ and $B$ from one of graphs:
  $$(A(J), g), \ (A(G_n), g), \hbox{ or } (r(A(G_{2n})), g), \hbox{ for } n \geqslant 3.$$
%\end{description}
\end{enumerate}
\begin{proof}
The proof follows from Theorem \ref{theo2.1}, because every graph $(G, g) \in \cal{G}$ which is generated by operations of typy $B$ from $(G_3, g)$ is op-equivalent to one of the following graphs:  
$$(J, g), \ (G_n, g), \hbox{ or } (r(G_{n}), g), \hbox{ for } n \geqslant 3.$$
Notice that 
$$(r(G_4), g)  = (G_4, g), \, (r(G_{2n+1}), g) = (G_{2n+1}, g), \hbox{ for } n \geqslant 1,$$
and
$$(r(G_{2n}), g) \neq (G_{2n}, g), \hbox{ for } n \geqslant 3.$$
\qed \end{proof}
\end{corollary}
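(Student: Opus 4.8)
\textbf{Plan for the proof of Corollary~\ref{coro2.1}.}

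The plan is to derive both parts directly from Theorem~\ref{theo2.1}, so the bulk of the work reduces to understanding the class of graphs generated by operations of type $B$ alone from the starting graph $(G_3, g)$. First I would observe that, by definition, every graph in ${\cal G}_0$ is generated by operations of type $A$, $B$, $C$ from $(G_3, g)$; an operation of type $A$ raises the height by exactly $1$, while operations of type $B$ and $C$ leave the height unchanged (they modify only the configuration near the outer cycle). Hence a graph $(G,g)\in{\cal G}_0$ with $h(G)=1$ cannot have involved any $A$-step, nor any $C$-step (a $C$-step requires a vertex $y_i\in V^2(G,g)$, i.e. $h(G)\geqslant 2$); so it is generated from $(G_3, g)$ by $B$-operations only. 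For $h(G)=2$, the construction sequence must contain exactly one $A$-step, and after removing the outer layer (passing to $G^{-1}$) one lands in the height-$1$ case; this is where Theorem~\ref{theo2.1}(1) together with Definition~\ref{def2.4} is invoked to write $(G,g)$ as generated by $B$- and $C$-operations from $(A(G^{-1}),g)$ with $(G^{-1},g)$ of height $1$.

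The key step is therefore the classification \emph{up to op-equivalence} of all graphs obtained from $(G_3,g)$ by $B$-operations. Here I would argue as follows. A $B$-operation (Definition~\ref{def2.3}(a)) is applicable precisely at an edge $x_ix_{i+1}$ of $c(G)$ with $d_G(x_i),d_G(x_{i+1})\leqslant 4$; it inserts the pattern $[\Delta_1,d]$ of Fig.~3, which geometrically amounts to adding a new degree-$3$ vertex on that edge — the inverse of the ``delete a degree-$3$ outer vertex'' move used in Case~(i) of the proof of Theorem~\ref{theo2.1}. Starting from $(G_3,g)$ (a fan/triangle configuration), a sequence of such insertions around the single layer $V^1(G,g)$ produces exactly the one-parameter families $G_n$ and their mirror reflections $r(G_n)$, plus the exceptional graph $J$ that arises when the insertions are arranged so that two degree-$3$ vertices become adjacent (the analogue of the $(F_1,g)$, $(F_2,g)$ endpoints in Case~(ii)). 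I would verify, by inspecting Fig.~4, that these are all the semi-triangulations on one layer with the degree constraints of ${\cal G}$: every outer vertex has degree $\leqslant 5$, $g$ is adjacent to all of them, and each layer vertex $y_j$ has degree $\leqslant 6$. The symmetry bookkeeping at the end — that $r(G_4,g)=(G_4,g)$, $r(G_{2n+1},g)=(G_{2n+1},g)$, and $r(G_{2n},g)\neq(G_{2n},g)$ for $n\geqslant 3$ — is a routine check of the op-equivalence relation (Section~1) against the explicit drawings in Fig.~4: the odd-index and $n=4$ graphs have a reflective symmetry fixing $g$, the others do not, which is why only the $r(G_{2n})$, $n\geqslant 3$, need to be listed separately in part~(1).

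With that classification in hand, part~(1) follows immediately: if $h(G)=1$ then $(G,g)$ is $B$-generated from $(G_3,g)$, hence op-equivalent to one of $(J,g)$, $(G_n,g)$, $(r(G_n),g)$ with $n\geqslant 3$, and by the symmetry identities the list collapses to $(J,g)$, $(G_n,g)$, $(r(G_{2n}),g)$, $n\geqslant 3$. For part~(2), if $h(G)=2$ then $(G^{-1},g)\in{\cal G}$ has height $1$; one checks $(G^{-1},g)\in{\cal G}_0$ as well (the inverse operations preserve the generating set), so by part~(1) it is op-equivalent to $(J,g)$, $(G_n,g)$, or $(r(G_{2n}),g)$; applying Theorem~\ref{theo2.1}(1) gives that $(G,g)$ is generated by $C$- and $B$-operations from $(A(G^{-1}),g)$, which is op-equivalent to $(A(J),g)$, $(A(G_n),g)$, or $(r(A(G_{2n})),g)$ since $A$ commutes with op-equivalence and with mirror reflection fixing $g$. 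The main obstacle I anticipate is not any single deduction but the faithful translation between the picture-level description of $B$-operations on Fig.~3 and the explicit families in Fig.~4, i.e. proving rigorously that $B$-generation from $(G_3,g)$ yields \emph{exactly} $\{(J,g)\}\cup\{(G_n,g),(r(G_n,g)):n\geqslant 3\}$ and nothing more; this requires a careful induction on the order, tracking which outer edges are $B$-eligible at each stage, essentially re-running Case~(i) of the proof of Theorem~\ref{theo2.1} and identifying the terminal graphs.
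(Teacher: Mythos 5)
Your proposal is correct and follows essentially the same route as the paper: both reduce the corollary to Theorem~\ref{theo2.1}, rest on the classification (up to op-equivalence) of the graphs generated from $(G_3,g)$ by operations of type $B$ alone as $\{(J,g)\}\cup\{(G_n,g),(r(G_n),g):n\geqslant 3\}$, and then collapse the list using the symmetry identities $(r(G_4),g)=(G_4,g)$, $(r(G_{2n+1}),g)=(G_{2n+1},g)$ and $(r(G_{2n}),g)\neq(G_{2n},g)$ for $n\geqslant 3$. The paper simply asserts that classification without proof, so your extra remarks (height arguments ruling out $A$- and $C$-steps at height $1$, and the sketched induction identifying the $B$-terminal graphs) only make explicit what the paper leaves implicit.
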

%\item[(3)] If $h(G) = 2$ and $(G, g) \in {\cal {G}}_n$, for $n \geqslant 1$, then it is generated by operations of type $C$ and $B$ from $(F_n, g)$.
\begin{corollary}\label{coro2.2}
For every $(G,g) \in \cal{G}$, the following equivalences hold:
\begin{enumerate}
\item[(1)]
$(G,g) \in {\cal{G}}_{0}$ $\Leftrightarrow$  $V^{h(G)}(G, g)$ is the set of vertices of a cycle in $G$,
 \item[(2)]
$(G,g) \in {\cal{G}}_{n}$, $n \geqslant 1$ $\Leftrightarrow$ %$|V^{h(G)}(G, g)| = n$ and
$V^{h(G)}(G, g)$ induces a path of length $n-1$ in $G$.
\end{enumerate}
\end{corollary}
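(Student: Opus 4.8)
The statement to prove is Corollary~\ref{coro2.2}, characterizing membership in the subfamilies $\mathcal{G}_0$ and $\mathcal{G}_n$ by the combinatorial structure of the farthest distance layer $V^{h(G)}(G,g)$.

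\medskip

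\textbf{Plan of proof.} The plan is to prove both equivalences simultaneously by induction on $h(G)$, using Theorem~\ref{theo2.1} and Corollary~\ref{coro2.1} as the engine. For the forward implications ($\Rightarrow$), I would argue that the operations generating $\mathcal{G}_0$ and $\mathcal{G}_n$ (namely $A$, $B$, $C$) preserve the relevant structure of the top layer. First I would establish the base cases: for $\mathcal{G}_0$ with $h(G)=1$, Corollary~\ref{coro2.1}(1) lists all graphs op-equivalent to $(J,g)$, $(G_n,g)$, or $(r(G_{2n}),g)$, and in each of these the set $V^1(G,g)=V(c(G))$ is by construction the vertex set of the outer cycle, hence of a cycle in $G$; for $\mathcal{G}_n$ the starting graph $(F_n,g)$ has height $1$ and $V^1(F_n,g)$ induces a path of length $n-1$, as is visible from Fig.~5.

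\medskip

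\textbf{The inductive step.} For the induction I would use the operations $A$, $B$, $C$. The key observations are: (i) applying $A$ to $(G,g)$ raises the height by $1$ and makes the new top layer $V^{h+1}(A(G),g)$ exactly the freshly added cycle $x_1\ldots x_m$, so it is always a cycle — this accounts for why every graph in $\mathcal{G}_0$ of height $\geqslant 2$ still has a cyclic top layer, and why for $\mathcal{G}_n$ one passes through $(A(F_n),g)$, after which the path structure has been pushed down a level; (ii) operations $B$ and $C$ act inside a configuration $\Gamma_i$ attached along $c(G)$ whose only vertices in the top layer are on $c(G)$, and by inspection of the patterns $[\Delta_1,d]$, $[\Delta_2,d]$ in Fig.~3 these operations modify only vertices at distance $\leqslant 2$ from $g$ while leaving $V^{h(G)}(G,g)$ untouched whenever $h(G)\geqslant 3$; when $h(G)\leqslant 2$ one checks directly against the finite list in Corollary~\ref{coro2.1}. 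Thus the property ``$V^{h(G)}(G,g)$ is a cycle'' (resp.\ ``induces a path of length $n-1$'') is an invariant of the generation process, giving $\Rightarrow$ in both parts.

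\medskip

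For the reverse implications ($\Leftarrow$), suppose $V^{h(G)}(G,g)$ is the vertex set of a cycle (resp.\ induces a path of length $n-1$). I would peel off the outer layers by repeatedly applying the inverse construction $G\mapsto G^{-1}$ of Definition~\ref{def2.4}(b): since each top-layer vertex lies below a cycle or path, passing to $G^{-1}$ decreases $h(G)$ by $1$ and keeps us in $\mathcal{G}$, and by Theorem~\ref{theo2.1} each step corresponds (in reverse) to undoing a sequence of $B$, $C$ operations together with one $A$ operation. Iterating, we reach a graph of height $1$ whose top layer is a cycle (resp.\ a path of length $n-1$), which by Corollary~\ref{coro2.1}(1) must be op-equivalent to $(J,g)$, $(G_k,g)$ or $(r(G_{2k}),g)$ (resp.\ must be $(F_n,g)$), placing the original graph in $\mathcal{G}_0$ (resp.\ $\mathcal{G}_n$). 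The main obstacle I anticipate is the bookkeeping at small heights: one must verify carefully that $B$ and $C$ never destroy a cyclic or path-shaped top layer when that layer is close to $g$ (heights $1$ and $2$), and that the operation $A$ really is the unique height-raising move consistent with Theorem~\ref{theo2.1}, so that the inductive descent via $G^{-1}$ is well-defined and terminates in the listed starting graphs. The rest is a routine propagation of structural invariants.
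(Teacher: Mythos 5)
Your overall strategy --- induction on the height $h(G)$ driven by Theorem~\ref{theo2.1}, propagating the structure of the top layer through the generating operations --- is exactly the route the paper takes (its proof is the one-line remark that the corollary follows from Theorem~\ref{theo2.1} by induction on $h(G)$). But your key step (i) gets the operation $A$ backwards, and this breaks the argument. By Definition~\ref{def2.4}(a) the new cycle $x_1\ldots x_m$ is inserted \emph{between} $g$ and the old outer cycle $c(G)=y_1\ldots y_m$: the edges $gy_i$ are deleted and the edges $gx_i$, $x_iy_i$, $x_{i+1}y_i$ are added. So the freshly added cycle becomes the new \emph{bottom} layer $V^1(A(G),g)$, each old layer $V^k(G,g)$ becomes $V^{k+1}(A(G),g)$, and the top layer $V^{h(G)+1}(A(G),g)=V^{h(G)}(G,g)$ is unchanged both as a set and as an induced subgraph. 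You assert instead that the new top layer is the added cycle, ``so it is always a cycle,'' with the path structure ``pushed down a level.'' If that were the behaviour of $A$, every graph in $\mathcal{G}_n$, $n\geqslant 1$, of height at least $3$ would have a cyclic top layer, i.e.\ your invariant would prove the \emph{negation} of the forward implication in part (2). The same confusion infects your base case: $(F_n,g)$ has height $2$, not $1$; $V^1(F_n,g)=c(F_n)$ is the outer cycle of $F_n-g$, and it is $V^2(F_n,g)$ that induces the path of length $n-1$.

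The invariant you need is: $A$ preserves the top layer verbatim while moving it one level further from $g$; $B$ and $C$ create only vertices adjacent to $g$ and alter only the layers $V^1$ and $V^2$, so for $h(G)\geqslant 3$ they leave the top layer untouched, and for $h(G)\leqslant 2$ one checks directly that $B$ merely enlarges the link of $g$ (which stays a cycle, covering the height-$1$ graphs of $\mathcal{G}_0$) and that $C$ does not add vertices to, or edges inside, $V^2$. Starting from $(G_3,g)$ (top layer a triangle) and $(F_n,g)$ (top layer a path on $n$ vertices), the forward implications then follow by induction via Theorem~\ref{theo2.1}. The reverse implications are immediate from the forward ones, since Theorem~\ref{theo2.1} shows $\mathcal{G}=\bigcup_k\mathcal{G}_k$ and the right-hand conditions in (1) and (2) are mutually exclusive (a set carrying a cycle cannot induce a path, and paths of different lengths differ); your explicit peeling via $G^{-1}$ is therefore unnecessary, though not wrong in spirit.
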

\begin{proof}
The proof follows from Theorem \ref{theo2.1} by induction with respect to the height $h(G)$.
\qed \end{proof}
\begin{corollary}\label{coro2.3}
$\{ {\cal{G}}_0, {\cal{G}}_1, {\cal{G}}_2, \ldots \}$ is a partition of $\cal{G}$.
\end{corollary}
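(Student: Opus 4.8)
The plan is to prove the three defining properties of a partition separately: each $\mathcal{G}_n$ is nonempty (immediate, since $\mathcal{G}_0$ contains $(G_3,g)$ and $\mathcal{G}_n$ contains $(F_n,g)$, and both families lie in $\mathcal{G}$ by Definitions~\ref{def2.3} and~\ref{def2.4}), the families cover $\mathcal{G}$, and they are pairwise disjoint. The first point needs nothing further, so the work lies in coverage and in disjointness.

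For coverage I would argue by induction on the height $h(G)$, using Theorem~\ref{theo2.1}. Let $(G,g)\in\mathcal{G}$. If condition (3) of Theorem~\ref{theo2.1} holds, then $(G,g)$ is generated by operations of type $B$ from $(G_3,g)$, hence $(G,g)\in\mathcal{G}_0$; if condition (2) holds, then $(G,g)\in\mathcal{G}_n$ for the corresponding $n\geqslant 1$ directly by Definition~\ref{def2.6}. In the remaining case, condition (1), we have $(G^{-1},g)\in\mathcal{G}$ and $(G,g)=(A(G^{-1}),g)$; since $A$ wraps a fresh outer cycle around the whole graph, $h(G^{-1})=h(G)-1$, so the induction hypothesis places $(G^{-1},g)$ in some $\mathcal{G}_k$, i.e. $(G^{-1},g)$ is generated from its starting graph ($(G_3,g)$ if $k=0$, otherwise $(F_k,g)$) by operations of type $A$, $B$, $C$. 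Applying a single operation of type $A$ then shows $(A(G^{-1}),g)$ is generated from that same starting graph, and composing with the type $C$ and type $B$ operations furnished by Theorem~\ref{theo2.1}(1) exhibits $(G,g)$ as a member of $\mathcal{G}_k$. The base $h(G)=1$ is fine because condition (1) cannot occur there: $G^{-1}$ is only defined when $h(G)>1$ (Definition~\ref{def2.4}(b)).

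For disjointness I would invoke Corollary~\ref{coro2.2}. For any $(G,g)\in\mathcal{G}$ the induced subgraph $G[V^{h(G)}(G,g)]$ is uniquely determined. By Corollary~\ref{coro2.2}(1), $(G,g)\in\mathcal{G}_0$ is equivalent to this subgraph being a cycle; by Corollary~\ref{coro2.2}(2), $(G,g)\in\mathcal{G}_n$ with $n\geqslant 1$ is equivalent to it being a path of length $n-1$, hence on exactly $n$ vertices. A cycle is never a path, and a path determines its number of vertices, so no $(G,g)$ can satisfy two of these conditions simultaneously; therefore the $\mathcal{G}_n$, $n\geqslant 0$, are pairwise disjoint. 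Together with coverage and nonemptiness this gives the claimed partition.

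I expect the main obstacle to be the coverage step, and within it the bookkeeping that converts the recursive phrasing of Theorem~\ref{theo2.1}(1), "$(G,g)$ is generated from $(A(G^{-1}),g)$", into the fixed-starting-graph language of Definition~\ref{def2.6}: one must check that operation $A$ is precisely the bridge between a $\mathcal{G}_k$-generation of $(G^{-1},g)$ and one of $(A(G^{-1}),g)$, and that the induction on height is well-founded, i.e. that the equality $(G,g)=(A(G^{-1}),g)$ really forces $h(G^{-1})<h(G)$. Disjointness, by contrast, is essentially a one-line consequence of Corollary~\ref{coro2.2} once one observes that "cycle" and "path of length $n-1$" are mutually exclusive and mutually distinguishing descriptions of the single graph $G[V^{h(G)}(G,g)]$.
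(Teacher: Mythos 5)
Your proposal is correct and follows essentially the same route as the paper, whose entire proof is the citation of Corollary~\ref{coro2.2}: disjointness is exactly your observation that a cycle and paths of distinct lengths are mutually exclusive descriptions of $G[V^{h(G)}(G,g)]$, and coverage is the other direction of those equivalences combined with the fact (Remark~\ref{rem2.1}, itself from Theorem~\ref{theo2.1}) that $V^{h(G)}(G,g)$ is always a cycle or a path. Your explicit induction on height for coverage just unwinds that same dependence on Theorem~\ref{theo2.1}; the only slip is the parenthetical claim that condition (1) gives $(G,g)=(A(G^{-1}),g)$ rather than ``generated from'' it, which you correct yourself a line later.
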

\begin{proof}
The proof follows from Corollary \ref{coro2.2}. 
\qed \end{proof}
%\begin{corollary}\label{coro2.3}
%If $(G,g)\in {\cal{G}}_n$, and $ h(G) > 2$, for $n > 0$ ($h(G) > 1$, for $n = 0$), then $(G^{-1}, g) \in {\cal{G}}_n$.
%\end{corollary}
\begin{remark}\label{rem2.1}
 Let $(G,g) \in \cal{G}$. If $k < h(G)$  ($k = h(G)$), then, by Theorem~\ref{theo2.1} and  Corollary \ref{coro2.2}, $V^k(G, g)$ is the set of vertices of a cycle (a cycle or a path, respectively) in $G$, which will be denoted by $c^k(G)$. We assume that the cycle $c^k(G)$ has the counterclockwise orientation, and it is indexed by elements of the cyclic group $\{1, 2, \ldots, |c^k(G)|\}$ with the addition modulo $m$.
\end{remark}
%The bellow Theorem~\ref{theo2.2} together with Theorem \ref{theo2.1} shows that every graph $(G, g) \in {\cal{G}}_n$ with $h(G) > 3$, for $n > 0$, ($h(G) > 2$, for $n = 0$) is generated by operations of type $\bar{C}$, $\bar{B}$, $C$, $B$ from the graph $(A^2(G^{-2}),g) \in {\cal{G}}_n$.

\begin{theorem}\label{theo2.2}
 If $(G,g)\in {\cal{G}}_n$, $n \geqslant 0$, is a graph of height $h(G) \geqslant 3$, then one of the following conditions is satisfied:.
%\begin{description}
\begin{enumerate}[\upshape(1)]
%\item{(1)} 
\item
$(G^{-2}, g) \in {\cal{G}}_n$ and  $(G, g)$
is generated by operations of type $\bar{C}$, $\bar{B}$, $C$, $B$ from $(A^2(G^{-2}), g)$,
%\item{(2)} 
\item
$n \geqslant 1$ and $(G, g)$ is generated by operations of type $\bar{C}$, $\bar{B}$, $C$, $B$ from $(A(F_n), g)$.
 %\end{description}
\end{enumerate}
\end{theorem}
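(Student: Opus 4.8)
The plan is to mirror the structure of Theorem~\ref{theo2.1}, but to descend \emph{two} levels at a time (using the operation $G \mapsto G^{-2}$ rather than $G \mapsto G^{-1}$) so that the cycle $c(G)$ together with the cycle $c^{h(G)-1}(G)$ can be reconstructed by iterated $A$-operations and then by the ``bar'' operations $\bar{C}$, $\bar{B}$ which act on two consecutive layers at once. First I would fix $c(G) = x_1\ldots x_m$ and, by Remark~\ref{rem2.1} and Corollary~\ref{coro2.2}, record that since $h(G)\geqslant 3$ the set $V^{h(G)-1}(G,g)$ is the vertex set of a cycle $c^{h(G)-1}(G)$, and that membership of $(G,g)$ in ${\cal G}_n$ is detected by the induced subgraph on $V^{h(G)}(G,g)$ (a cycle if $n=0$, a path of length $n-1$ if $n\geqslant 1$). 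I would then do induction on the order of $G$. The base of the analysis is the case in which every vertex of $c(G)$ has degree $5$: by Lemma~\ref{lem2.1}, either $(G^{-1},g)\in{\cal G}$ with $(G,g)=(A(G^{-1}),g)$, or $(G,g)=(F_n,g)$ for some $n\geqslant 3$; in the first subcase one repeats the degree-$5$ analysis on the cycle $c^{h(G)-1}(G) = c(G^{-1})$, and a second application of Lemma~\ref{lem2.1} (after translating it from $c(G)$ to $c^{h(G)-1}(G)$ via the combinatorics of the layers, exactly as in the proof of Theorem~\ref{theo2.1}) yields that either $(G^{-2},g)\in{\cal G}$ with $(G,g)=(A^2(G^{-2}),g)$, or we reach $(A(F_n),g)$ directly. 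Since $A$ and $A^2$ preserve the ${\cal G}_n$-class (an $A$-operation adds an outermost cycle without changing the induced structure on the previous outer layer up to the obvious shift), this gives the skeleton of both conclusions (1) and (2).

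Next I would handle the generic case, where $c(G)$ has a vertex of degree $3$ or $4$. Following the case split of Theorem~\ref{theo2.1}: if $d_G(x_2)=3$ and both neighbours on $c(G)$ have degree $\geqslant 4$, then $(G-x_2,g)\in{\cal G}$, has the same height and the same second-outermost layer, so $(G-x_2,g)\in{\cal G}_n$ with $h(G-x_2)\geqslant 3$, and $(G,g)=(B(G-x_2),g)$; apply the induction hypothesis and compose with the $B$-operation. If instead $d_G(x_2)=4$, the three consecutive vertices $x_1,x_2,x_3$ share a common neighbour $y$, which (by the degree bound on $c(G)$ and the argument in Case~(ii) of Theorem~\ref{theo2.1}) must lie in $V^2(G,g)$; depending on how the faces around $y$ and the second layer look, one removes $x_2$ (and possibly $y$, or a vertex $v_i\in V^3(G,g)$) and adds the appropriate chords, landing on a smaller graph $H\in{\cal G}_n$ with $h(H)\geqslant 3$ to which induction applies, and $(G,g)$ is recovered from $H$ by one of $C$, $\bar B$, $\bar C$ — this is precisely where the configurations $\Delta_3,\Delta_4$ of Definition~\ref{def2.3}(c),(d) are designed to be inserted. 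In each subcase I must check that $(G^{-2},g)=(H^{-2},g)$ (the reduction does not touch layers $\geqslant 2$, except in the $\bar B$/$\bar C$ cases where it touches layers $2$ and $3$ in exactly the controlled way that $G^{-2}$ is insensitive to), so that conclusion~(1) for $H$ transports to $G$.

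The main obstacle I anticipate is the bookkeeping needed to guarantee, in Case~(ii), that after removing $x_2$ and adjusting the chords the resulting graph $H$ \emph{stays in $\cal G$} with the \emph{same height} and \emph{same ${\cal G}_n$-class}, and that the reduction is genuinely an inverse of one of $\bar B,\bar C$ as defined — in particular that the vertices $y_i,y_{i+1}\in V^2(G,g)$ and $v_i\in V^3(G,g)$ occurring in Definition~\ref{def2.3}(c),(d) really do satisfy the stated degree bounds ($\leqslant 5$) forced by the hypothesis on $\cal G$, and that collapsing a face-fan at $x_2$ cannot create a multiple edge or drop a vertex from a layer. This is the two-layer analogue of the ``if $x_1,x_3$ are adjacent'' dichotomy in Theorem~\ref{theo2.1}, and the terminal cases (where one cannot reduce and instead reads off $(G,g)$ as an $A$- or $A^2$- or $A(F_n)$-image) will require the same kind of finite case-check as there. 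I would organise this as a careful enumeration of the local pictures of layers $V^1,V^2,V^3$ around the low-degree vertex, using Figs.~7 and~8 as the canonical templates for the $\bar B$ and $\bar C$ reductions, and reduce everything else to Theorem~\ref{theo2.1} applied one layer down.
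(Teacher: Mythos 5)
Your framework---induction on the order of $G$, branching on whether $c(G)$ contains a vertex of degree $3$ or $4$---cannot reach the conclusion, because it is blind to exactly the situations in which the operations $\bar{B}$ and $\bar{C}$ are needed. The operations $\bar{B}_i$ and $\bar{C}_i$ of Definition~2.3(c),(d) insert new vertices into \emph{both} of the two outer layers in such a way that every vertex of the resulting outer cycle still has degree $5$; this is encoded in the identities $(A(B_{i}(H)), g) = (\bar{B}_{i}(A(H)), g)$ and $(A(C_{i}(H)), g) = (\bar{C}_{i}(A(H)), g)$ (Figs.~7--8). Consequently a graph of the form $G = A(G^{-1})$, where $G^{-1}$ is a nontrivial $C,B$-descendant of $A(G^{-2})$ or of $F_n$, has all vertices of $c(G)$ of degree $5$ and yet is neither $A^2(G^{-2})$ nor $A(F_n)$. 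Your ``generic case'' never fires for such a $G$ because $c(G)$ has no low-degree vertex, and your ``base case'' disposes of it by ``a second application of Lemma~2.1'' one layer down---but Lemma~2.1 requires \emph{every} vertex of $c(G^{-1})$ to have degree $5$, which is exactly what fails here. So the case that actually produces the $\bar{B},\bar{C}$ operations in conclusions (1) and (2) is not covered. (A minor symptom of the same confusion: the second layer relevant to the descent is $c^2(G) = c(G^{-1})$, not $c^{h(G)-1}(G)$, unless $h(G)=3$.)

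The paper closes this gap differently: it applies Theorem~2.1 twice (to $G$, yielding $(G^{-1},g)\in{\cal G}_n$ and the outer $C,B$-layer; then to $G^{-1}$, yielding $G^{-2}$ or $F_n$), and then proves by induction on the order of $K$ the lifting statement that if $K$ is generated by $C,B$ from $A(G^{-2})$ (resp.\ from $F_n$), then $A(K)$ is generated by $\bar{C},\bar{B}$ from $A^2(G^{-2})$ (resp.\ from $A(F_n)$). The entire content of that induction is the pair of commutation identities above, which you never state or use. If you insist on your direct two-layer induction, you must add the missing case ``all vertices of $c(G)$ have degree $5$ but some vertex of $c(G^{-1})$ has degree at most $4$ in $G^{-1}$'', identify the local picture there as the image of a single $\bar{B}$ or $\bar{C}$ operation, and verify $A\circ B_i = \bar{B}_i\circ A$ and $A\circ C_i = \bar{C}_i\circ A$ to see that undoing it stays in the inductive class---at which point you will have reconstructed the paper's argument in a heavier form.
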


\begin{proof}  Since  $(G, g) \in {\cal{G}}_n$ and $h(G) \geqslant 3$, by Theorem \ref{theo2.1} and Corollary \ref{coro2.3}, $(G^{-1}, g) \in {\cal{G}}_n$ and $(G, g)$ is generated by operations of type $C$ and $B$ from $(A(G^{-1}), g)$. Furthermore, since $h(G^{-1}) \geqslant 2$, one of the following cases holds:
\begin{enumerate}
 \item[(a)]
$(G^{-2}, g) \in {\cal{G}}_n$ and $(G^{-1}, g)$ belongs to the family $\cal {S}$ of all graphs which are generated by operations of type $C$ and $B$ from $(A(G^{-2}), g)$,
 \item[(b)]
$n \geqslant 1$ and $(G^{-1}, g)$ belongs to the family ${\cal {S}}_0$ of all graphs which are generated by operations of type $C$ and $B$ from $(F_n, g)$.
\end{enumerate}

Case (a) Let $\cal {T} \subset \cal{G}$ be the family of all graphs which are generated by operations of type $\bar{C}$, $\bar{B}$ from $(A^2(G^{-2}),g)$.  It is sufficient to prove the following:
$$\hbox{if }\ (K, g) \in {\cal {S}}, \hbox{ then } (A(K), g) \in {\cal {T}}.$$
We proceed by induction with respect to the order of a graph. Let $(K, g) \in \cal {S}$. We can assume that $(K, g) \neq (A(G^{-2}), g)$. Hence, there is a graph $(H,g) \in \cal {S}$   and a replacement operation  
$$B_i : [\Gamma_i, g] \rightarrow  [\Delta_1, d], \hbox{ or } C_i : [\Gamma_i, g] \rightarrow  [\Delta_2, d],$$ 
such that 
\begin{enumerate}
 \item[(i)]
$(K, g) = (B_{i}(H), g)$, or $(K, g) = (C_{i}(H), g)$,
\end{enumerate}
where  $c(H) = {y_1}\ldots{y_m}$, and $\Gamma_i$ is a configuration of $H$ bounded by the cycle $y_{i}gy_{i+1}$ in Fig.~7, or by the cycle $y_{i}gy_{i+1}v_{i}$ in Fig.~8, respectively. 

It is easy to see from the definitions of the operations $A$, $B_{i}$, $C_{i}$, $\bar{B}_{i}$, $\bar{C}_{i}$ that 
$$(A(B_{i}(H)), g) = (\bar{B}_{i}(A(H)), g)\ \hbox{ (see Fig.~7)},$$
 or
  $$(A(C_{i}(H)), g) = (\bar{C}_{i}(A(H)), g)\ \hbox{ (see Fig.~8)}.$$
 Hence, by (i), we obtain
\begin{enumerate}
 \item[(ii)]
$(A(K), g) = (\bar{B}_{i}(A(H)), g)$,
or $(A(K), g) = (\bar{C}_{i}(A(H)), g)$.
\end{enumerate}

Since $H$ has fewer vertices than $K$, we can assume, that $(A(H), g) \in \cal {T}$. Hence, by (ii), $(A(K), g) \in \cal {T}$.

%***********************************************************

%***********************************************************

%**************************************************

Case (b) Let ${\cal {T}}_0 \subset \cal{G}$ be the family of all graphs which are generated by operations of type $\bar{C}$, $\bar{B}$ from  $(A(F_n), g)$. It is sufficient to prove the following:
$$\hbox{if }\ (K, g) \in {\cal {S}}_0, \hbox{ then } (A(K), g) \in {\cal {T}}_0.$$
The proof is analogous to that in (a).
\qed \end{proof}
%Notice that if  $(G^{-1}, g) \in \cal{G}$, then all vertices of the cycle $c(A(G^{-1}))$ are of degree $5$. Thus, directly from Theorem \ref{theo2.1} and Theorem \ref{theo2.2}, % and Corollary \ref{coro2.2}
%we obtain:
%\begin{corollary}\label{coro2.5}
%If $(G, g) \in {\cal{G}}_n$, for $n \geqslant 1$, and $h(G) = 3$, then $(G, g)$
%is generated by operations of type $\bar{C}$, $\bar{B}$, $C$, $B$ from $(A(F_n), g)$.
%\end{corollary}

\section{Hamiltonian sets in graphs from the family $\cal {G}$}
Let $(G,g) \in \cal {G}$. We recall that $c(G) = x_1,\ldots,x_m$ is the outer cycle of  ${G - g}$ with the counterclockwise orientation. We say that $U$ is \textsl{($-$)compatible} with the orientation of $c(G)$, if for every $1 \leqslant i \leqslant m$ the following implication is satisfied: if $x_{i} \notin U$ and  $d_{G}(x_{i}) \leqslant 4$, then $x_{i-2} \in U$. We say that $U \subset V(G)$ is \textsl{($\pm$)compatible} with the orientation of $c(G)$, if $U$ is compatible as well as ($-$)compatible with the orientation of $c(G)$. %For example, every graph $(F_n, g)$, $n \geqslant 0$, has a hamiltonian set of vertices (marked by circles in Fig.4) which is $(\pm)$compatible with $c(F_n)$.
Notice that the graph $(A_4, g)$ defined in Fig.~15 (or the graph $(H_7, g)$ in Fig.~20) doesn't have any hamiltonian set of vertices which is ($\pm$)compatible with the orientation of $c(A_4)$ (or $c(H_7)$, respectively).

Two vertices $u$ and $v$ of the graph $G  - g$ are called  \textsl{similar}  if there exists automorphism of $G  - g$ (rotation) which maps $u$ to $v$, and preserves the counterclockwise orientation of $c(G)$.

We denote the numbers of vertices and edges in a plane graph $H$ by $|H|$ and $e(H)$, respectively. It is well known that if $H$ is connected, then it is a tree if and only if $e(H) = |H| - 1$. %Let  $H$ be a plane graph and $D \subset V(H)$.  It is well known that if the induced graph $H[D]$ is connected, then it is a tree if and only if it has $|D| - 1$ edges.

\begin{definition}\label{def3.1}
For $1 \leqslant k \leqslant 4$ and $0 \leqslant j \leqslant n(k)$, where $n(1) = 3$, ${n(2) = 6}$, $n(3) = 5$, and $n(4) = 2$ we define a pattern
$[\Upsilon^j_k, d, D^j_k]$, where $\Upsilon^j_k$ is a plane semi-triangulation in Fig.~9, $d$ is a fixed vertex in $\sigma(\Upsilon^j_k)$, and $D^j_k$ is a fixed set of vertices marked by the circles in the graph $\Upsilon^j_k$.

%**************************
 %{\bf lub} A pattern $[\Upsilon^j_k, d, D^j_k]$, for $\cdots$

%****************************
\end{definition}
The following lemma follows directly from Definition \ref{def3.1}.
\begin{lemma}\label{lem3.1}
For $1 \leqslant k \leqslant 4$ and $0 \leqslant j \leqslant n(k)$, where  $n(1) = 3$, $n(2) = 6$, $n(3) = 5$, $n(4) = 2$, the following conditions are satisfied:
%\begin{description}
\begin{enumerate}[\upshape(1)]
%\item[(1)]
\item
$[\Upsilon^j_k, d, D^j_k]$ is $\sigma$-compatible with $[\Upsilon^0_k, d, D^0_k]$,
%\item[(2)]
\item
$D^j_k$ dominates all faces of $\Upsilon^j_k$, and $d \notin D^j_k$.
%\end{description}
\end{enumerate}
\end{lemma}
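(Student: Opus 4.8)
The plan is to verify both claims by direct inspection of the twenty patterns $[\Upsilon^j_k, d, D^j_k]$ drawn in Fig.~9, exploiting the fact that for each fixed $k$ the boundary cycle $\sigma(\Upsilon^j_k)$ is the same for all admissible $j$; only the interior triangulation and the placement of the marked (circled) vertices vary with $j$.

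For part (1), I would observe that $\sigma$-compatibility of $[\Upsilon^j_k, d, D^j_k]$ with $[\Upsilon^0_k, d, D^0_k]$ amounts, by Definition~\ref{def2.1}, to exhibiting an orientation-preserving isomorphism $\varphi\colon \sigma(\Upsilon^j_k)\to\sigma(\Upsilon^0_k)$ with $\varphi(d)=d$, carrying $D^j_k\cap V(\sigma(\Upsilon^j_k))$ onto $D^0_k\cap V(\sigma(\Upsilon^0_k))$ and chords of $\sigma(\Upsilon^j_k)$ onto chords of $\sigma(\Upsilon^0_k)$. Reading off Fig.~9, for each $k$ the graphs $\Upsilon^0_k,\ldots,\Upsilon^{n(k)}_k$ share a common outer cycle of fixed length with the same distinguished vertex $d$, the same set of chords, and the same subset of boundary vertices marked by circles; hence the identity map on $\sigma(\Upsilon^j_k)$ serves as $\varphi$, which gives (1).

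For part (2), I would go through the finitely many patterns one at a time; there are $\sum_{k=1}^{4}\bigl(n(k)+1\bigr)=4+7+6+3=20$ of them. For each pattern one checks, again from Fig.~9, that (i) the distinguished vertex $d$ carries no circle, so $d\notin D^j_k$, and (ii) every proper (triangular) face of $\Upsilon^j_k$ is incident with at least one circled vertex; the distinguished face is bounded by $\sigma(\Upsilon^j_k)$, which contains circled vertices, so it too is dominated. Together these say that $D^j_k$ dominates all faces of $\Upsilon^j_k$, which is (2).

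The only real work lies in the bookkeeping for part (2): there is nothing conceptual, but one must account for every triangle in each of the twenty configurations, in particular the small faces created near $d$ and along the chords, and confirm a circled vertex on each. Once the figure is laid out this is immediate, which is why the lemma is asserted to follow directly from Definition~\ref{def3.1}.
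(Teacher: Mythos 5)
Your proof is correct and matches the paper's approach: the paper offers no separate argument, stating only that the lemma follows directly from Definition~\ref{def3.1}, i.e.\ by inspection of the twenty patterns in Fig.~9, which is precisely the case-by-case verification you describe. Your count of $20$ patterns and the observation that the boundary cycle, chords, circled boundary vertices and $d$ coincide for all $j$ with fixed $k$ are exactly the checks implicit in the paper.
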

We will define a replacement operation $\omega$ which will be used in Lemma \ref{lem3.2}, Theorem~\ref{theo3.1} and Lemma \ref{lem3.8}.

\begin{definition}\label{def3.2}
Suppose that $(H, g) \in \cal{G}$ is a graph of the height $h(H) \geqslant 2$, $c(H) = z_1 \ldots z_m$ and $c^2(H) = t_1 \ldots t_n$. Let $X$ be a hamiltonian set in $(H, g)$ which is $(-)$compatible with the orientation of $c(H)$. Let us denote
 \begin{align*}
M &= \{ 1\leqslant i \leqslant n: t_i \notin X \hbox{ and } d_H(t_i) \leqslant 5 \},
\\
M_1 &= \{ i \in M:  d_H(t_i) \geqslant 4, \hbox{ there is exactly one face } z_{j}z_{j+1}t_i,
\hbox{ for some }
\\
&\quad j = j(i), \hbox{such that } z_j \in X, z_{j+1} \notin X, \hbox{ and } d_H(z_j) = d_H(z_{j+1}) = 5  \},
\\
M_2  &= \{ i \in M:  \hbox{ there is exactly one face }z_{j}z_{j+1}t_i, \hbox{ for some } j = j(i), \hbox{ such that}
\\
&\quad z_j, z_{j+1} \in X ,\hbox{ and } d_H(z_j), d_H(z_{j+1}) = 4 \hbox{ or } 5 \},
\\
M_3 &= \{ i \in M:  d_H(t_i) \geqslant 4, \hbox{ there is exactly one pair of faces } z_{j-1}z_{j}t_i, z_{j}z_{j+1}t_i,
\\
&\quad\hbox{for some } j = j(i), \hbox{ such that } z_{j-1}, z_{j+1} \in X, z_j \notin X, d_H(z_j)= 4, \hbox{and }
\\
&\quad d_H(z_{j-1}) = d_H(z_{j+1}) = 5 \}.
\\
M_4 &= \{ i \in M\:  d_H(t_i) \geqslant 4, \hbox{there is exactly one face } z_{j}z_{j+1}t_i, \hbox{ for some }
\\
&\quad j = j(i), \hbox{such that } z_j \notin X,  z_{j+1} \in X ,\hbox{ and } d_H(z_j) = d_H(z_{j+1}) = 5 \}.
\end{align*}
We assume that% the sets $M_1, \ldots, M_4$ satisfy the following condition:
\begin{enumerate}
 \item[(1)]
 %$\left\{
% \begin{array}{l}
$M_1$, $M_2$, $M_3$, $M_4$ are pairwise disjoint.
%\\[4pt]
%\hbox{if }  t_i \notin X  \hbox{ and } d_H(t_i) = 3, \hbox{ then } i \in M_2.
% \end{array}
% \right.$
 \end{enumerate}

Let $\Omega_i$, $i \in {M_1 \cup M_2 \cup M_4}$, be a configuration of  $H$ such that $z_{j(i)}z_{j(i)+1}g$ and $z_{j(i)}z_{j(i)+1}t_i$ are the only proper faces of $\Omega_i$. By Definition \ref{def3.1} of the pattern $[\Upsilon^0_k, d, D^0_k]$, for $k = 1, 2, 4$, we obtain:
\begin{enumerate}
 \item[(2)]
 $\left\{
 \begin{array}{l}
[\Omega_i, g, X \cap V(\Omega_i)] = [\Upsilon^0_1, d, D^0_1], \hbox{ for } i \in M_1,
\\[4pt]
[\Omega_i, g, X \cap V(\Omega_i)] = [\Upsilon^0_2, d, D^0_2], \hbox{ for } i \in M_2,
\\[4pt]
[\Omega_i, g, X \cap V(\Omega_i)] = [\Upsilon^0_4, d, D^0_4], \hbox{ for } i \in M_4.
 \end{array}
 \right.$
 \end{enumerate}

Let $\Omega_i$, $i \in  M_3$, be a minimal configuration of  $H$ such that $z_{j(i)}$ is the only vertex in the proper region of $\Omega_i$. By Definition \ref{def3.1} of the pattern $[\Upsilon^0_3, d, D^0_3]$ we obtain:
\begin{enumerate}
 \item[(3)]
 $[\Omega_i, g, X \cap V(\Omega_i)] = [\Upsilon^0_3, d, D^0_3]$, for $i \in M_3$.
 \end{enumerate}

Let $\Omega(H,X)$ be the set of all functions $\bar{\omega}: M_1 \cup \ldots \cup M_4 \to \{ 0,1,\ldots, 6 \}$ satisfying the following conditions:

$\bar{\omega} (i) \leqslant 6$, for $d_H(t_i) = 3$ and $i \in M_2$,

$\bar{\omega} (i) \leqslant 5$, for $d_H(t_i) = 4$ and $i \in M_3$,

$\bar{\omega} (i) \leqslant 3$, for $d_H(t_i) = 4$ and $i \in M_1 \cup M_2$,

$\bar{\omega} (i) \leqslant 2$, for $d_H(t_i) = 5$ and $i \in M_3$,

$\bar{\omega} (i) \leqslant 2$, for $d_H(t_i) = 4$ and $i \in M_4$,

$\bar{\omega} (i) \leqslant 1$, for $d_H(t_i) = 5$ and $i \in M_1 \cup M_2 \cup M_4$.
\\
Moreover, let $\Omega^*(H,X)$ be the set of all functions $\bar{\omega} \in  \Omega(H,X)$ such that

$\bar{\omega} (i) \leqslant 2$, for $d_H(t_i) = 4$ and $i \in M_1$.
\\

Fix $\bar{\omega} \in \Omega(H,X)$. It follows from (2) and (3) that proper regions of $\Omega_i$, for $i \in {M_1 \cup \ldots \cup M_4}$, are pairwise disjoint. Hence, by Lemma~\ref{lem3.1}(1), we can define a replacement operation
 $$\omega: [\Omega_i, g,  X \cap V(\Omega_i)]
 \ {\xrightarrow[]{i \in M_k, \ k = 1, \ldots, 4}} \
 [ \Upsilon^{\bar{\omega}(i)}_{k}, d, D^{\bar{\omega}(i)}_{k}],$$
to obtain a plane triangulation $(\omega(H),g)$ from  $(H,g)$, and the set
$$\omega(X)= (X \setminus \bigcup_{i \in M_1 \cup \ldots \cup M_4} V(\Omega_i)) \cup  \bigcup_{k = 1, \ldots, 4} \\ \bigcup_{i \in M_k} D^{\bar{\omega}(i)}_k \subset V(\omega(H)).$$

If $M_1 \cup \ldots \cup M_4 = \emptyset$ we put $(\omega(H),g) =(H,g)$ and $\omega(X) = X$.
\end{definition}

\begin{lemma}\label{lem3.2}
Let $(H, g) \in \cal{G}$ be a graph of height $h(H) \geqslant 2$, and let $X$ be a hamiltonian set in $(H, g)$ which is $(-)$compatible with the orientation of $c(H)$. Let us define sets $M_1, \ldots, M_4$ satisfying condition (1) of Definition \ref{def3.2}, the set of functions $\Omega(H,X)$, and the replacement operation $\omega$ for $\bar{\omega} \in \Omega(H,X)$ as in Definition \ref{def3.2}.

For every $\bar{\omega} \in \Omega(H, X)$, $(\omega(H), g) \in \cal{G}$, and $\omega(X)$ is a hamiltonian set in $(\omega(H), g)$ which is $(-)$compatible with the orientation of $c(\omega(H))$.

Moreover, if $X$ is  $(\pm)$compatible with the orientation of $c(H)$, then  for every $\bar{\omega} \in \Omega^*(H, X)$, $\omega(X)$ is $(\pm)$compatible with the orientation of $c(\omega(H))$.
\end{lemma}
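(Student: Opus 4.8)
The plan is to verify the three assertions — that $(\omega(H),g)\in\mathcal G$, that $\omega(X)$ is hamiltonian in $(\omega(H),g)$, and that it is $(-)$compatible (resp. $(\pm)$compatible) — by a local analysis of the replacement operation $\omega$ introduced in Definition \ref{def3.2}. The key observation is that $\omega$ consists of pairwise independent replacements: by condition (1) of Definition \ref{def3.2} the index sets $M_1,\ldots,M_4$ are disjoint, and by parts (2) and (3) of that definition together with the disjointness of the proper regions of the configurations $\Gamma_i$ of $H$ from which the $\Omega_i$ are cut out, the proper regions of the $\Omega_i$, $i\in M_1\cup\cdots\cup M_4$, are pairwise disjoint. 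Hence it suffices to check that a single replacement $[\Omega_i,g,X\cap V(\Omega_i)]\to[\Upsilon^{\bar\omega(i)}_k,d,D^{\bar\omega(i)}_k]$ has the desired effect, and that the effects do not interfere; the changes outside $\bigcup_i V(\Omega_i)$ are untouched by $\omega$.

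First I would confirm $(\omega(H),g)\in\mathcal G$. Since each $[\Upsilon^j_k,d,D^j_k]$ is $\sigma$-compatible with $[\Upsilon^0_k,d,D^0_k] = [\Omega_i,g,X\cap V(\Omega_i)]$ (Lemma \ref{lem3.1}(1) and Definition \ref{def3.2}(2)--(3)), the replacement produces a plane triangulation by Definition \ref{def2.2}; that it is \emph{simple} and that $g$ has no edge to a vertex of degree exceeding the allowed bound follows because $\Upsilon^j_k$ is drawn inside the region bounded by $\sigma(\Omega_i)$ and only its boundary is identified with $\sigma(\Omega_i)$. The degree conditions defining $\mathcal G$ (every vertex $\ne g$ of degree $\le 6$, every neighbour of $g$ of degree $\le 5$) are exactly why the bounds $\bar\omega(i)\le n(k)$ and the finer bounds in the definition of $\Omega(H,X)$ were imposed: the degree of $t_i$ in $\omega(H)$ equals $d_H(t_i)$ plus a contribution from the inserted copy of $\Upsilon^{\bar\omega(i)}_k$, and the constraints on $\bar\omega(i)$ according to $d_H(t_i)$ and which $M_k$ contains $i$ are precisely calibrated to keep this $\le 6$ (or $\le 5$ when $t_i$ becomes a neighbour of $g$); similarly for the vertices $z_{j(i)}, z_{j(i)+1}$ and, in the $M_3$ case, the interior vertex. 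I would run through the four cases $k=1,2,3,4$ and read off the degree balances from Fig.~9.

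Next, for the hamiltonian conclusion: $g\notin\omega(X)$ is immediate since $g=d\notin D^j_k$ (Lemma \ref{lem3.1}(2)) and $g\notin X$. That $\omega(X)$ dominates all faces of $\omega(H)$ follows because $X$ dominated all faces of $H$, the faces of $\omega(H)$ outside the $\Omega_i$ are unchanged, and inside each inserted $\Upsilon^{\bar\omega(i)}_k$ the set $D^{\bar\omega(i)}_k$ dominates all its faces (Lemma \ref{lem3.1}(2)); one checks the faces of $\omega(H)$ straddling $\sigma(\Omega_i)$ are dominated because $D^0_k\cap V(\sigma)$ equals $X\cap V(\sigma(\Omega_i))$ under the $\sigma$-compatibility. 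That $\omega(X)$ induces a \emph{tree} I would verify with the edge count criterion recalled just before the lemma: $e(\omega(H)[\omega(X)]) = |\omega(H)[\omega(X)]| - 1$. Passing from $X$ to $\omega(X)$, the change in $|{\cdot}|$ and the change in $e({\cdot})$ can be computed locally in each $\Upsilon^{\bar\omega(i)}_k$ relative to $\Omega_i$; since $X\cap V(\Omega_i)$ induced a tree-part there (it is the $j=0$ pattern) and $D^{\bar\omega(i)}_k$ is designed so that $\Upsilon^{\bar\omega(i)}_k[D^{\bar\omega(i)}_k]$ is again a suitable tree-part with the same boundary behaviour, the equality $e=|{\cdot}|-1$ is preserved, and connectivity is preserved because the boundary vertices in $D^{\bar\omega(i)}_k$ that attach to the rest coincide with those of $X\cap V(\Omega_i)$. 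I would present this as a short computation case by case on $k$ and on $\bar\omega(i)$, again reading from Fig.~9.

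The main obstacle — and the step I would spend the most care on — is the compatibility claim. The set $\omega(X)$ differs from $X$ on $c(\omega(H))$ only near the vertices $z_{j(i)}$ and $z_{j(i)+1}$, so I would examine, for each $k$, how the pattern $\Upsilon^{\bar\omega(i)}_k$ sits along the outer cycle and what degrees the vertices $z_{j(i)}, z_{j(i)+1}$ acquire in $\omega(H)$. The $(-)$compatibility implication (if $x\notin\omega(X)$ and $d_{\omega(H)}(x)\le 4$ then $x_{-2}\in\omega(X)$) must be checked: (i) at vertices far from all $\Omega_i$, where it is inherited from the $(-)$compatibility of $X$; (ii) at the boundary vertices $z_{j(i)},z_{j(i)+1}$, where the hypotheses $d_H(z_{j(i)})=d_H(z_{j(i)+1})=5$ (in $M_1,M_4$) or $=4$ or $5$ (in $M_2$) and the membership pattern $z_j\in X, z_{j+1}\notin X$ etc. are exactly what was built into the definitions of $M_1,\ldots,M_4$ so that after inserting $\Upsilon$ the two-step look-ahead lands on a vertex of $D^{\bar\omega(i)}_k$; and (iii) at new vertices of $\Upsilon^{\bar\omega(i)}_k$ lying on the new outer cycle, where it must follow from the explicit structure of the patterns in Fig.~9 — this is why $D^j_k$ was chosen as it was. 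For the $(\pm)$ addendum, the extra constraint $\bar\omega(i)\le 2$ for $d_H(t_i)=4$, $i\in M_1$ defining $\Omega^*(H,X)$ is the one that additionally secures the symmetric $(+)$compatibility (the original "compatible" implication) at those spots; I would isolate exactly that case and show that the larger values $\bar\omega(i)=3$ would create a degree-$4$ vertex on $c(\omega(H))$ violating the forward look-ahead, whereas the rest of the $(+)$compatibility is immediate from compatibility of $X$ and the pattern structure. I would organize the whole compatibility verification as a finite case check over $k\in\{1,2,3,4\}$, the admissible values of $\bar\omega(i)$, and the positions relative to $\sigma(\Omega_i)$, each case reduced to reading adjacencies and circled vertices off Fig.~9.
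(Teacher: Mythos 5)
Your proposal is correct and follows essentially the same route as the paper: the paper's own proof is a terse verification that the degree constraints on $\bar{\omega}$ keep $\omega(H)$ in $\cal{G}$, that domination follows from Lemma~\ref{lem3.1}(2), that the tree property and $(-)$compatibility are preserved ``by the definition of the patterns'' $[\Upsilon^j_k, d, D^j_k]$ --- i.e.\ exactly the local, figure-by-figure checks you organize via disjointness of the proper regions, the edge-count criterion $e=|{\cdot}|-1$, and the boundary behaviour under $\sigma$-compatibility. Your treatment is in fact more explicit than the paper's (which does not even separately argue the $\Omega^*$ addendum), and correctly identifies why each constraint in $\Omega(H,X)$ and $\Omega^*(H,X)$ is imposed.
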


\begin{proof}
Let $\bar{\omega} \in \Omega(H,X)$. Since $M_1$, $M_2$, $M_3$, $M_4$ are pairwise disjoint, we have ${d_{\omega(H)}(t) \leqslant 6}$, for every $t \in c^2(\omega(H)) = c^2(H)$. Hence, $(\omega(H), g) \in {\cal{G}}$.

Since $X$ dominates all faces in $H$, by   Lemma \ref{lem3.1}(2), $\omega(X)$ dominates all faces in $\omega(H)$. Certainly, $\omega(X)$ induces a tree in $\omega(H)$, because  $X$ induces a tree in $H$. Since  $X$ is $(-)$compatible with the orientation of $c(H)$, $\omega(X)$ is $(-)$compatible with the orientation of $c(\omega(H))$, by the definition of patterns $[\Upsilon^j_k, d, D^j_k]$. Hence, Lemma~\ref{lem3.2} holds.
\qed \end{proof}

\begin{definition}\label{def3.3}
For $k = 1, 2, 3$, $0 \leqslant j \leqslant  n(k)$, where $n(1) = 3$, $n(2) = n(3) = 6$, we define a pattern
$[\Delta^j_k, d, D^j_k]$, where $\Delta^j_k$ is a plane semi-triangulation in Fig.~10 (for $k = 1, 2$) or Fig.~11 (for $k = 3$), $d$ is a fixed vertex in $\sigma(\Delta^j_k)$, and $D^j_k$ is a fixed set of vertices marked by the circles in the graph $\Delta^j_k$.

%*****************************

%********************************

%********************************************************
\end{definition}
The following lemmas are simple consequences of Definition \ref{def3.3}.

\begin{lemma}\label{lem3.3}
%For $1 \leqslant k \leqslant 3$ and $0 \leqslant j \leqslant n(k)$, where $n(1) = 3$, and $n(2) = n(3) = 6$,
For $k = 1,  2,  3$, $0 \leqslant  j \leqslant n(k)$, where $n(1) = 3$, $n(2) = n(3) =6$, the following conditions are satisfied:
%\begin{description}
\begin{enumerate}[\upshape(1)]
%\item[(1)]
\item
$[\Delta^j_k, d, D^j_k]$ is $\sigma$-compatible with $[\Delta^0_k, d, D^0_k]$.
%\item[(2)]
\item
$D^j_k$ dominates all faces of $\Delta^j_k$, and $d \notin D^j_k$.
%\item[(3)]
\item
 $|D^j_k| - e({\Delta^j_k}[D^j_k]) = |D^0_k| - e({\Delta^0_k}[D^0_k]).$
%\end{description}
\end{enumerate}
\end{lemma}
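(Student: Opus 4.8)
The plan is to read off all three claims directly from the pictures defining the patterns $[\Delta^j_k, d, D^j_k]$ in Fig.~10 and Fig.~11, following the same template used for Lemma~\ref{lem3.1}, which records the analogous facts for the patterns $[\Upsilon^j_k, d, D^j_k]$. Since $\Delta^0_k$ and $\Delta^j_k$ are, by Definition~\ref{def3.3}, plane semi-triangulations with the same distinguished face, the first step is to exhibit for each $k\in\{1,2,3\}$ and each $0\leqslant j\leqslant n(k)$ an explicit isomorphism $\varphi\colon\sigma(\Delta^j_k)\to\sigma(\Delta^0_k)$ of the bounding cycles that preserves orientation, sends the distinguished vertex $d$ of $\Delta^j_k$ to that of $\Delta^0_k$, sends $D^j_k\cap V(\sigma(\Delta^j_k))$ onto $D^0_k\cap V(\sigma(\Delta^0_k))$, and carries chords of $\sigma(\Delta^j_k)$ to chords of $\sigma(\Delta^0_k)$. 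The figures are designed precisely so that all members of a family share a common boundary cycle, a common marked boundary set, and a common chord structure (only the triangulation of the proper region, and the marking of interior vertices, varies with $j$); hence $\varphi$ can be taken to be the identity on the shared boundary, and (1) follows immediately from the definition of $\sigma$-compatibility in Definition~\ref{def2.1}.

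For (2), I would go picture-by-picture through Fig.~10 and Fig.~11 and verify the two assertions for each of the finitely many patterns (there are $n(1)+1 + n(2)+1 + n(3)+1 = 4 + 7 + 7 = 18$ of them). The statement $d\notin D^j_k$ is visible from the figure, since $d$ is marked as the distinguished boundary vertex and is deliberately left unmarked. The domination statement amounts to checking that every proper (triangular) face of $\Delta^j_k$, together with the distinguished face, is incident with at least one circled vertex; this is a routine inspection of each small configuration, and it is exactly the property the sets $D^j_k$ were constructed to have. I expect no difficulty here beyond the bookkeeping of running through all eighteen cases.

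For (3), the quantity $|D^j_k| - e(\Delta^j_k[D^j_k])$ is (when $\Delta^j_k[D^j_k]$ is a forest) the number of connected components of the induced subgraph on the circled vertices, and more generally it is the Euler characteristic of that induced subgraph; I would verify directly from Fig.~10 and Fig.~11 that this number is the same for $\Delta^j_k$ as for $\Delta^0_k$, for every admissible $j$. Concretely, for each $k$ one counts $|D^0_k|$ and the number of edges of $\Delta^0_k$ among circled vertices, then does the same for each $\Delta^j_k$; the figures are arranged so the difference is invariant (typically both $|D^j_k|$ and $e(\Delta^j_k[D^j_k])$ change by the same amount as $j$ varies, since each "extra" circled vertex introduced when passing from $\Delta^0_k$ to $\Delta^j_k$ comes with exactly one extra induced edge attaching it). This invariance is what will later let one glue the pattern via the operation $\omega$ without disturbing the tree-count relation $e(H')=|H'|-1$ recorded just before Definition~\ref{def3.1}.

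The main obstacle is not conceptual but is simply that the three claims are "read off Fig.~10 and Fig.~11" — the argument is only as reliable as a careful, exhaustive case check against those figures, and the one place where an error could slip in is item (3), where one must correctly count induced edges (it is easy to miscount a chord or an interior edge among the circled vertices). I would therefore present the proof as: fix $k$ and $j$; take $\varphi$ to be the identity on the shared boundary cycle of $\Delta^j_k$ and $\Delta^0_k$, which gives (1) by Definition~\ref{def2.1}; then for (2) inspect the faces of $\Delta^j_k$ in Fig.~10 (resp.\ Fig.~11) and note each is dominated by a circled vertex and that $d$ is uncircled; and for (3) compute $|D^j_k|$ and $e(\Delta^j_k[D^j_k])$ from the figure and observe the difference equals $|D^0_k| - e(\Delta^0_k[D^0_k])$. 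Since all of this is a finite verification from the defining figures, as the paper already does for Lemma~\ref{lem3.1}, the lemma follows.
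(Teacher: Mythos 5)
Your proposal is correct and matches the paper's treatment: the paper offers no written proof of Lemma~\ref{lem3.3}, stating only that it is a ``simple consequence of Definition~\ref{def3.3}'', i.e.\ a finite inspection of the patterns in Fig.~10 and Fig.~11, which is exactly the case-by-case verification you outline. Your additional remark that $|D^j_k|-e(\Delta^j_k[D^j_k])$ is the component count of the induced forest, and that this invariance is what preserves the tree relation under the replacement operations, is a faithful reading of how the lemma is used later in Theorem~\ref{theo3.1}.
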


\begin{lemma}\label{lem3.4}
Let $L(\Delta^j_1)$ (or $R(\Delta^j_2)$), $1\leqslant j \leqslant 3$, be a configuration of $\Delta^j_1$ ($\Delta^j_2$, respectively) with the proper region bounded by the black cycle in Fig.~10. Suppose that $L(\Delta^0_3)$ (or $R(\Delta^0_3)$) is a  configuration of $\Delta^0_3$ with the proper region bounded by the black cycle on the left (on the right, respectively) in Fig.~11. Then
 $$[L(\Delta^j_1), d, D^j_1 \cap V(L(\Delta^j_1))] = [\Delta^0_1, d, D^0_1] = [R(\Delta^0_3), d, D^0_3 \cap V(R(\Delta^0_3))],$$
 and
      $$[R(\Delta^j_2), d, D^j_2 \cap V(R(\Delta^j_2))] = [\Delta^0_2, d, D^0_2] = [L(\Delta^0_3), d, D^0_3 \cap V(L(\Delta^0_3))].$$
\end{lemma}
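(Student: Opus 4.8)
The plan is to verify Lemma \ref{lem3.4} essentially by inspection of Figs.~10 and 11, since the statement only asserts that certain sub‑configurations of the larger patterns $\Delta^j_1$, $\Delta^j_2$, $\Delta^0_3$ coincide (as patterns, i.e.\ under an orientation‑preserving isomorphism carrying $d$ to $d$ and the marked set to the marked set) with the base patterns $\Delta^0_1$ and $\Delta^0_2$. First I would fix, for each $j$ with $1\leqslant j\leqslant 3$, the black cycle in Fig.~10 that bounds the proper region of $L(\Delta^j_1)$; by construction in Definition \ref{def3.3} this black cycle is exactly the boundary cycle $\sigma(\Delta^0_1)$, traversed with the same (counterclockwise, since the proper region is bounded) orientation, and the vertices of $L(\Delta^j_1)$ interior to it are precisely the interior vertices of $\Delta^0_1$ with the same incidences. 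One then checks that the marked set $D^j_1$ restricted to $V(L(\Delta^j_1))$ is the marked set $D^0_1$ under this identification — this is visible from the circles drawn in the figure, and is the content of how $D^j_k$ is obtained from $D^0_k$ by the "sliding'' that produces the family indexed by $j$. The same reading gives $[R(\Delta^j_2),d,D^j_2\cap V(R(\Delta^j_2))]=[\Delta^0_2,d,D^0_2]$.

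Next I would treat the two halves of $\Delta^0_3$ in Fig.~11. The pattern $\Delta^0_3$ is drawn so that its proper region is split by a chord (the edge joining the two relevant boundary vertices) into a left piece and a right piece; $L(\Delta^0_3)$ and $R(\Delta^0_3)$ are exactly these two pieces together with that chord. I would check that the left piece, with the orientation induced on its boundary black cycle, is isomorphic to $\Delta^0_2$ carrying $d\mapsto d$ — i.e.\ $[L(\Delta^0_3),d,D^0_3\cap V(L(\Delta^0_3))]=[\Delta^0_2,d,D^0_2]$ — and the right piece to $\Delta^0_1$ — i.e.\ $[R(\Delta^0_3),d,D^0_3\cap V(R(\Delta^0_3))]=[\Delta^0_1,d,D^0_1]$. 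Again the only nontrivial point is that the marked vertices match: one verifies that a boundary vertex of $\Delta^0_3$ lying on the dividing chord is in $D^0_3$ iff the corresponding boundary vertex of $\Delta^0_1$ (resp.\ $\Delta^0_2$) is in $D^0_1$ (resp.\ $D^0_2$), and that the interior marked vertices correspond under the isomorphism. Chaining these identifications with the ones from the previous paragraph gives the two displayed equalities of the lemma.

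The main (indeed only) obstacle is the bookkeeping: one must make sure the orientations agree (the left/right asymmetry in Fig.~11 and the fact that $L(\Delta^j_1)$ sits on one side while $R(\Delta^j_2)$ sits on the other must be handled so that all the composed isomorphisms are orientation‑preserving and fix $d$), and that the chord‑vertex membership in the marked sets is consistent across all three patterns $\Delta^0_1,\Delta^0_2,\Delta^0_3$. Since these are fixed finite patterns exhibited explicitly in Figs.~10 and 11, the verification is a finite check; I would carry it out by listing, for each pattern, its boundary cycle as a labelled sequence, its interior vertices with their neighbours, and its marked set, and then exhibiting the explicit label‑to‑label bijections realising the three asserted pattern equalities. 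No induction or auxiliary lemma is needed: Lemma \ref{lem3.4} is, as the text says, a simple consequence of Definition \ref{def3.3}.
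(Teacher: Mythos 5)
Your proposal is correct and matches the paper's approach: the paper gives no explicit proof, simply declaring Lemma \ref{lem3.4} a ``simple consequence of Definition \ref{def3.3},'' i.e.\ a finite inspection of the labelled patterns in Figs.~10 and 11, which is precisely the orientation- and marking-preserving bookkeeping check you describe.
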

%\begin{lemma}\label{lem3.4}
%Let $L(\Delta^j_1)$ (or $R(\Delta^j_2)$), $1\leqslant j \leqslant 3$, be a configuration of $\Delta^j_1$ ($\Delta^j_2$, respectively) with the proper region bounded by the black cycle in Fig.10.  % Suppose that $L$ (or $R$) is a  configuration of $\Delta^0_3$ with the proper region bounded by the black cycle on the left (on the right, respectively) in Fig.10.
%Then
%$$[L(\Delta^j_1), d, D^j_1 \cap V(L(\Delta^j_1))] = [\Delta^0_1, d, D^0_1]$$ % = [R, d, D^0_3 \cap V(R)]$$
%and
% $$[R(\Delta^j_2), d, D^j_2 \cap V(R(\Delta^j_2))] = [\Delta^0_2, d, D^0_2].$$ % = [L, d, D^0_3 \cap V(L)].$$
%\end{lemma}
\begin{lemma}\label{lem3.5}
For $k = 1,  2,  3$, $0 \leqslant  j \leqslant  n(k)$, where $n(1) = 3$, $n(2) = n(3) =6$, the following conditions are satisfied:
%\begin{description}
\begin{enumerate}[\upshape(1)]
%\item[(1)]  
\item
Every vertex of $D^j_k$, $k = 1, 2$, is linked with a vertex $a$ or $b$ by a path with all vertices belonging to $D^j_k$, where $a$ and $b$ are vertices in Fig.~10.

%\item[(2)] 
\item
 Every vertex of $D^j_3$ is linked with a vertex $a_1$, $b_1$ or $b_2$ by a path with all vertices belonging to $D^j_3$, where $a_1$, $b_1$ and $b_2$ are vertices in Fig.~11.

%\item[(3)]
\item
The vertex $a_2 \in D^j_3$, for $j \neq 2$, is linked with  a vertex $b_1$ or $b_2$ by a path with all vertices belonging to $D^j_3$, where $a_2$, $b_1$, $b_2$ are vertices in Fig.~11.
% \end{description}
\end{enumerate}
\end{lemma}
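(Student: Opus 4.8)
The statement is settled by direct inspection of the patterns $\Delta^j_k$ and the marked sets $D^j_k$ drawn in Figures~10 and~11, as the paper indicates when it calls this a simple consequence of Definition~\ref{def3.3}. The plan is to organise that inspection so that essentially only the base case $j=0$ must be read off a picture, with the remaining values of $j$ handled through the invariants already recorded in Lemma~\ref{lem3.3}.

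First I would cut down the range of $j$. For each fixed $k$ the induced subgraph $\Delta^j_k[D^j_k]$ is a forest (visible in the figures), so the number of its connected components equals $|D^j_k|-e(\Delta^j_k[D^j_k])$, which by Lemma~\ref{lem3.3}(3) is independent of $j$. Hence it suffices to: (a) verify for $j=0$ that every component of $\Delta^0_k[D^0_k]$ contains one of the designated boundary vertices ($a$ or $b$ for $k\in\{1,2\}$, and $a_1$, $b_1$ or $b_2$ for $k=3$), and, for part~(3), that the component of $a_2$ contains $b_1$ or $b_2$; and (b) check that the local modification carrying $\Delta^0_k$ to $\Delta^j_k$ — which by Lemma~\ref{lem3.3}(1) leaves the bounding cycle $\sigma(\Delta^j_k)$ together with $d$, its marked vertices and its chords unchanged with $j$, and which therefore alters $\Delta^0_k$ and $D^0_k$ only at interior vertices — neither separates a designated vertex from the rest of its component nor leaves some component without an anchor. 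Since $D^j_k$ must still dominate all faces of $\Delta^j_k$ and avoid $d$ (Lemma~\ref{lem3.3}(2)), the admissible interior modifications are rigid, so step (b) is routine for each of the finitely many $j$.

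Step (a) for $k\in\{1,2\}$ is immediate from Figure~10. For $k=3$ I would use Lemma~\ref{lem3.4}: $\Delta^0_3$ splits along the black cycles into a configuration $L(\Delta^0_3)$ with $[L(\Delta^0_3),d,D^0_3\cap V(L(\Delta^0_3))]=[\Delta^0_2,d,D^0_2]$ and a configuration $R(\Delta^0_3)$ with $[R(\Delta^0_3),d,D^0_3\cap V(R(\Delta^0_3))]=[\Delta^0_1,d,D^0_1]$. Applying part~(1) inside each piece recovers the component structure of $\Delta^0_3[D^0_3]$ and shows every component meets $\{a_1,b_1,b_2\}$, which gives part~(2); the same bookkeeping locates, for $j\neq 2$, the component containing $a_2$ and shows that it also contains $b_1$ or $b_2$, whereas at $j=2$ this path is destroyed — which is precisely why part~(3) excludes $j=2$.

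The only real difficulty is bookkeeping: eighteen patterns ($4+7+7$) must be examined, with particular care at the extreme values $j=0$ and $j=n(k)$, where parts of $D^j_k$ can slide onto $\sigma(\Delta^j_k)$, and at the excluded value $j=2$ for $k=3$. The component count from Lemma~\ref{lem3.3}(3) and the $L/R$ splitting from Lemma~\ref{lem3.4} keep this case analysis short, and no new idea is needed.
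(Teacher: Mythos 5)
Your plan is correct and in substance identical to the paper's: the paper offers no argument for Lemma~3.5 beyond declaring it a simple consequence of Definition~3.3, i.e.\ of inspecting the finitely many patterns $[\Delta^j_k,d,D^j_k]$ drawn in Figures~10--11, and your proposal likewise bottoms out in exactly that finite inspection. The organizing devices you add (the component count via Lemma~3.3(3), the $L/R$ splitting via Lemma~3.4) are sound but do not actually remove the need to examine each figure separately, since Lemma~3.3(1)--(2) constrain only the boundary data and not the interiors of the $\Delta^j_k$ --- so your step~(b) is not ``rigidity'' but the same case-by-case check the paper silently performs.
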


\begin{lemma}\label{lem3.6}
For $j = 4, 5, 6$, let $s_1\ldots s_{m(j)}$ be a path in $\Delta^j_3$ induced by the set $V^1(\Delta^j_3, d)$ such that $s_1 = a_1$ and $s_{m(j)} = a_2$, where $a_1$, $a_2$ are vertices in Fig.~11.

If $s_i \notin D^j_3$ and $d_{\Delta^j_3}(s_i) = 4$, then
 $s_{i-2} \in D^j_3$.%, for $3 \leqslant i < m(j)$.

\end{lemma}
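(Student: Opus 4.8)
The plan is to prove the statement by a direct finite inspection of the three patterns $\Delta^4_3$, $\Delta^5_3$, $\Delta^6_3$ drawn in Fig.~11 together with their marked vertex sets $D^4_3$, $D^5_3$, $D^6_3$. The claim is a purely combinatorial property of these finitely many explicitly given plane semi-triangulations, in the same spirit as Lemma~\ref{lem3.5}, so no general argument is needed; the only question is how to organize the bookkeeping.

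First I would record, for each $j \in \{4,5,6\}$, the path $s_1 \ldots s_{m(j)}$ induced by $V^1(\Delta^j_3, d)$. This set does induce a path with ends $a_1$ and $a_2$: the proper faces of $\Delta^j_3$ incident with $d$ are triangles, so consecutive neighbours of $d$ along the fan around $d$ are adjacent, and $a_1$, $a_2$ are the two vertices of $\sigma(\Delta^j_3)$ at which this fan terminates. Alongside the path I would tabulate, for every index $i$, the degree $d_{\Delta^j_3}(s_i)$ and whether $s_i$ is circled, i.e.\ whether $s_i \in D^j_3$. By Lemma~\ref{lem3.3}(1) the pattern $[\Delta^j_3, d, D^j_3]$ is $\sigma$-compatible with $[\Delta^0_3, d, D^0_3]$, so the initial and final segments of $s_1 \ldots s_{m(j)}$ (the parts near $a_1$ and near $a_2$) coincide for all three values of $j$; only the vertices of the path lying over the region in which $\Delta^j_3$ differs from $\Delta^0_3$ need separate attention for each $j$.

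Then, for each admissible index $i$ (so $i \geqslant 3$, which is exactly the range in which $s_{i-2}$ is defined), I would check the implication: if $s_i \notin D^j_3$ and $d_{\Delta^j_3}(s_i) = 4$, then $s_{i-2} \in D^j_3$. The degree-$4$ vertices among the $s_i$ are precisely those incident with exactly two triangular proper faces inside $\sigma(\Delta^j_3)$ besides the outer triangle to $d$, hence they occur only at a few prescribed positions of each pattern, and at each such position the vertex two steps earlier along the path is circled by the very definition of $D^j_3$ in Definition~\ref{def3.3}. For $i \in \{1,2\}$ one checks by inspection of Fig.~11 that the hypothesis $s_i \notin D^j_3$, $d_{\Delta^j_3}(s_i) = 4$ cannot hold in any of $\Delta^4_3, \Delta^5_3, \Delta^6_3$, so the conclusion is never invoked outside the range where $s_{i-2}$ exists. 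The only step that requires care is transcribing the degree data and the circled set correctly from Fig.~11 for all three patterns; once that table is written down, the implication is verified entry by entry, and this bookkeeping — not any conceptual difficulty — is the main (and quite minor) obstacle.
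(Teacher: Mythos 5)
Your proposal is correct and matches the paper's approach: the paper gives no explicit proof of Lemma~\ref{lem3.6}, presenting it (like Lemmas~\ref{lem3.3}--\ref{lem3.5}) as a direct consequence of Definition~\ref{def3.3}, i.e.\ exactly the finite inspection of the three patterns $\Delta^4_3$, $\Delta^5_3$, $\Delta^6_3$ and their circled sets in Fig.~11 that you describe.
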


\begin{lemma}\label{lem3.7}
Suppose that $(G, q) \in \cal{G}$ has a hamiltonian set $U \subset V(G)$ which is compatible with the orientation of the cycle $c(G)$, and  $d_G(x) = 3$ for every vertex $x$ of $c(G)$ not belonging to $U$. Then $(G, g) = (G_3, g)$.
\end{lemma}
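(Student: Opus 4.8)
The plan is to induct on the order $|V(G)|$. Write $c(G)=x_1\ldots x_m$. Since $G[U]$ is a tree and the cycle $c(G)$ has length $m\geqslant 3$, we cannot have $V(c(G))\subseteq U$, so we may fix $x_i\in V(c(G))\setminus U$. By hypothesis $d_G(x_i)=3$, so the neighbours of $x_i$ are exactly $g$, $x_{i-1}$, $x_{i+1}$; hence the faces of $G$ at $x_i$ are $gx_{i-1}x_i$, $gx_ix_{i+1}$ and $x_{i-1}x_ix_{i+1}$, in particular $x_{i-1}x_{i+1}$ is an edge of $G$, and of course $x_{i-1},x_{i+1}\in U$. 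If $m=3$, then the triangle $x_{i-1}x_ix_{i+1}$ equals $c(G)$ and so bounds the proper region of $G-g$; thus $G-g$ has no interior vertex, whence $(G,g)=(G_3,g)$. This also settles the base case $|V(G)|=4$, so from now on assume $m\geqslant 4$; then $x_{i-1}\neq x_{i+1}$ and $x_{i-1}x_{i+1}$ is a chord of $c(G)$.

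Put $G':=G-x_i$. Deleting $x_i$ merges its three incident faces into the single triangular face $gx_{i-1}x_{i+1}$, so $G'$ is again a plane triangulation, $(G',g)\in\cal{G}$ (all degrees only drop, and the deletion creates no multiple edge), and $c(G')$ is obtained from $c(G)$ by deleting $x_i$, of length $m-1\geqslant 3$. The heart of the induction step is to check that $U$ is again a hamiltonian set in $(G',g)$ which is compatible with the orientation of $c(G')$ and for which every vertex of $c(G')$ outside $U$ has degree $3$. Hamiltonicity is immediate: $g\notin U$, $G'[U]=G[U]$ is a tree because $x_i\notin U$, and $U$ dominates every old face as before and the new face $gx_{i-1}x_{i+1}$ because it contains $x_{i-1}\in U$. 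The degree-$3$ condition persists because the vertices of $c(G')$ outside $U$ are those of $V(c(G))\setminus U$ other than $x_i$, and none of them is adjacent to $x_i$. For compatibility, note first that $x_{i-2}\in U$ (otherwise $x_{i-2}\notin U$ has $d_G(x_{i-2})=3\leqslant 4$, and compatibility of $U$ in $G$ would force $x_i\in U$) and $x_{i+2}\in U$ (compatibility of $U$ in $G$ applied to $x_i$); hence every vertex of $c(G')$ outside $U$ is some $x_j$ with $j\notin\{i-2,i-1,i,i+1,i+2\}$, for which the second cyclic successor along $c(G')$ is the same vertex $x_{j+2}$ as along $c(G)$, and $x_{j+2}\in U$ by compatibility of $U$ in $G$.

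Granting the previous paragraph, the induction hypothesis applied to $(G',g)$ yields $(G',g)=(G_3,g)$; since $G_3$ has four vertices and $|c(G_3)|=3$, this forces $|V(G)|=5$ and $m=4$. In this last situation let $x_{i-1},x_{i+1}$ be the cycle-neighbours of $x_i$ and $x_{i+2}=x_{i-2}$ the remaining vertex of $c(G)$. Since $x_{i-1}x_{i+1}$ is a chord, $\{x_{i-1},x_{i+1},x_{i+2}\}$ induces a triangle in $G$; moreover $x_{i-1}$ and $x_{i+1}$ have degree $4$ while $x_{i+2}$ has degree $3$. The degree-$3$ hypothesis forces $x_{i-1},x_{i+1}\in U$, and then $x_{i+2}\notin U$ (otherwise $U$ would contain that triangle, contradicting that $G[U]$ is a tree); but $d_G(x_{i+2})=3\leqslant 4$, so compatibility forces the second cyclic successor of $x_{i+2}$, namely $x_i$, to lie in $U$ — contradicting $x_i\notin U$. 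Hence the case $m\geqslant 4$ is impossible, and the induction is complete, so $(G,g)=(G_3,g)$.

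The step I expect to be the main obstacle is the middle paragraph: verifying that hamiltonicity and, above all, compatibility are inherited by $(G-x_i,g)$, together with the routine checks that $(G-x_i,g)\in\cal{G}$ and that the degenerate small cases behave as claimed; none of this is deep, but it must be carried out with care in the immediate neighbourhood of the deleted vertex.
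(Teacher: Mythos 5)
Your proof is correct and follows essentially the same route as the paper: the paper also removes a degree-$3$ vertex of $c(G)$ lying outside $U$, invokes minimality (your induction) to force the reduced graph to be $(G_3,g)$, and then rules out the resulting five-vertex graph $(G_4,g)$ because it cannot satisfy the hypotheses. Your version merely spells out the inheritance of hamiltonicity, compatibility and the degree-$3$ condition, and the final contradiction for $m=4$, which the paper leaves implicit.
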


\begin{proof}
%\looseness=1
Let $(G, q) \neq (G_3, g)$ be a graph in $\mathcal{G}$ of minimum order satisfying the assumptions of Lemma \ref{lem3.7}. Since $U$ induces a tree in $G$, there is a vertex $x$ of $c(G)$ not belonging to $U$. Hence, $d_G(x) = 3$. Notice that $U$ is a hamiltonian set in the graph $(G - x, g)\in \mathcal{G}$ which satisfies the assumptions of  Lemma \ref{lem3.7}. Thus $(G - x, g) = (G_3, g)$, whence $(G, g) = (G_4, g)$. We obtain a contradiction, because  $(G_4, g)$ does not satisfy the assumptions of Lemma \ref{lem3.7}.
\qed \end{proof}

\begin{theorem}\label{theo3.1}
If $(G, g) \in {\mathcal{G}}$ has a hamiltonian set of vertices which is compatible with the orientation of $c(G)$, then every graph $(K,g) \in {\mathcal{G}}$ which is generated by operations of type $\bar{C}$, $\bar{B}$, $C$, $B$ from $(A^2(G), g)$ has a hamiltonian set of vertices which is $(-)$compatible with the orientation of $c(K)$.
\end{theorem}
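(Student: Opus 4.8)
The plan is to argue in two stages: first show that $(A^2(G),g)$ itself carries a hamiltonian set which is $(-)$compatible with $c(A^2(G))$, and then push such a set forward along each generating operation $\bar C,\bar B,C,B$, by induction on the number of operations producing $(K,g)$ from $(A^2(G),g)$.

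For the base case, write $c(A^2(G))=w_1\ldots w_m$, $c^2(A^2(G))=x_1\ldots x_m$ and $c^3(A^2(G))=y_1\ldots y_m=c(G)$, where $x_k$ is adjacent to $y_{k-1},y_k,w_k,w_{k+1}$ and every $w_k$ has degree $5$. Let $I=\{\,i:y_i\notin U\,\}$. Because $U$ induces a tree it cannot contain the whole cycle $c(G)$, so $I\neq\emptyset$; because $U$ dominates the faces $gy_iy_{i+1}$ of $G$ and $g\notin U$, no two consecutive indices lie in $I$. I would then set
$$W \;=\; U\;\cup\;\{\,w_j:\,j-1\notin I\,\}\;\cup\;\{\,x_{i+1}:\,i\in I\,\}.$$
For $i\in I$ the vertex $x_{i+1}$ has exactly two neighbours in $W$ — namely $w_{i+2}$ (an endpoint of one of the $|I|$ arcs into which the $w$-cycle breaks) and $y_{i+1}$ (which lies in $U$, since $i+1\notin I$) — and it joins that arc to $U$; a count of vertices and edges (recall $e(T)=|T|-1$ for a tree, cited above) then gives that $W$ is connected and acyclic. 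A face-by-face check shows $W$ dominates every face of $A^2(G)$: the triangles $gw_iw_{i+1}$, $w_ix_iw_{i+1}$ and $w_{i+1}x_ix_{i+1}$ because the missing $w$-indices form an independent set and the $x_{i+1}$, $i\in I$, are present; the triangles $x_iy_ix_{i+1}$ because $y_i\in U$ when $i\notin I$ and $x_{i+1}\in W$ when $i\in I$; the triangles $x_{i+1}y_iy_{i+1}$ because $I$ is independent; and every other face is a face of $G$, dominated by $U$. Finally, all vertices of $c(A^2(G))$ have degree $5$, so the implication defining $(-)$compatibility is vacuous and $W$ is $(-)$compatible (indeed $(\pm)$compatible) with $c(A^2(G))$.

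For the inductive step suppose $(K,g)=\theta(K',g)$ with $\theta$ of type $B$, $C$, $\bar B$ or $\bar C$, and, by the induction hypothesis, let $U'$ be a hamiltonian set of $(K',g)$ that is $(-)$compatible with $c(K')$. The operation $\theta$ replaces a configuration $\Gamma_i$ supported on a fan of one, two, three or four triangles at $g$ (Definition~\ref{def2.3}) by one of the fixed patterns $\Delta_1,\Delta_2,\Delta_3,\Delta_4$. I would read off the trace of $U'$ on $\sigma(\Gamma_i)$ together with the degrees of the relevant boundary vertices $z_j,z_{j+1}$ (and $y_i,v_i$), and select from the library $[\Delta^\ell_k,d,D^\ell_k]$ of Definition~\ref{def3.3} the member reproducing that local picture; this is legitimate because by Lemma~\ref{lem3.3}(1) all these patterns are $\sigma$-compatible with $[\Delta^0_k,d,D^0_k]$, and the parameter $\ell$ was set up to exhaust exactly the configurations that arise. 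Putting $W:=(U'\setminus V(\Gamma_i))\cup D^\ell_k$, face-domination is Lemma~\ref{lem3.3}(2); the tree property of $W$ follows from the invariance of the defect $|D^\ell_k|-e(\Delta^\ell_k[D^\ell_k])$ (Lemma~\ref{lem3.3}(3)) together with the linking statements of Lemmas~\ref{lem3.4}--\ref{lem3.5}, which guarantee each piece of $D^\ell_k$ reaches the boundary vertices $a,b$ (resp.\ $a_1,b_1,b_2$) and hence $U'$; and $(-)$compatibility of $W$ with $c(K)$ is exactly what Lemma~\ref{lem3.6} was stated to give, wherever an operation has forced a cycle vertex down to degree $4$. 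When a replacement leaves a second-level neighbourhood not directly matched by the $\Delta^\ell_k$-library, the replacement operation $\omega$ of Definition~\ref{def3.2} together with Lemma~\ref{lem3.2} is available to normalise it while preserving both the hamiltonian property and $(-)$compatibility; and the degenerate case in which all omitted vertices of $c(K)$ have degree $3$ is ruled out by Lemma~\ref{lem3.7}, which would force $K=G_3$, incompatible with $K$ being generated from $A^2(G)$.

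The step I expect to be the real obstacle is maintaining the tree property — connectedness and acyclicity together — across the whole chain of replacements: a single replacement both deletes and inserts edges adjacent to $g$, and can locally fuse two branches of the tree or sever one. This is precisely what the defect invariance of Lemma~\ref{lem3.3}(3) and the boundary-linking of Lemmas~\ref{lem3.4}--\ref{lem3.5} are engineered to control, and it is entangled with the $(-)$compatibility bookkeeping, since the single free parameter $\ell$ in each replacement must serve both the tree count and the compatibility inequality at once. By contrast the face-domination checks (via Lemmas~\ref{lem3.1}(2) and~\ref{lem3.3}(2)) and the base-case estimates are routine once the correct local templates and the independence of $I$ are in hand.
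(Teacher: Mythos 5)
Your base case is essentially the paper's own construction: the paper takes $W=U\cup\{y_{i+1}:i\in N_1\}\cup\{x_{i+1}:i\notin N_1\}$, which, like your set, is $U$ together with one pendant path per gap of $U$ on $c(G)$, each path consisting of a single second-level vertex and an arc of the outer cycle; the verification (tree, domination, vacuous $(\pm)$compatibility because every outer vertex of $A^2(G)$ has degree $5$) is the same, so this part is fine.

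The inductive step is where there is a genuine gap. You propose to push the hamiltonian set forward through the generating operations one at a time, at each step choosing a member of the library $[\Delta^{\ell}_k,d,D^{\ell}_k]$ that ``reproduces the local picture''. But that library is not built for iteration: $[\Delta^0_k,d,D^0_k]$ is the trace of the \emph{initial} set $W$ on a fixed two-level configuration of $A^2(G)$ itself (the faces $x_ix_{i+1}g$, $x_ix_{i+1}y_i$, $y_iy_{i+1}x_{i+1}$, $y_iy_{i+1}v_i$), and the versions with $j\geqslant 1$ are the possible \emph{end results} of an entire sequence of operations inside that region, performed in one shot. After one replacement, the trace of your current set on the configuration of the next operation need not match any source pattern of the library, and you give no argument that it does; indeed local patching genuinely fails --- e.g.\ an operation of type $C$ inserts a degree-$4$ outer vertex $z$ between two vertices $x_i,x_{i+1}$ of the current tree, so $z$ cannot be added without closing a cycle, while omitting it forces a $(-)$compatibility condition on $x_{i-1}$ that need not hold. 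What the paper supplies and your proposal omits is precisely the normal-form argument: claims (9)--(11) that \emph{every} graph generated from $(A^2(G),g)$ by an arbitrary sequence of $\bar C,\bar B,C,B$ is op-equivalent to some $(\lambda\circ\gamma(A^2(G)),g)$ followed by at most one application of $\omega$ on the second-level cycle; the accompanying case analysis showing that every vertex $t_m$ of $c^2(H_0)$ of degree at most $5$ falls into $M_1\cup M_2\cup M_3$ or can be absorbed by modifying $\bar\lambda$; condition (1) of Definition 3.2 that $M_1,\ldots,M_4$ be pairwise disjoint; the special choice (3) for $(G_3,g)$ and $(F_1,g)$; and the exceptional graph $(R,g)$, which is generated from $(A^2(G_3),g)$ but is \emph{not} op-equivalent to any $(\lambda\circ\gamma(A^2(G_3)),g)$ and must be handled separately. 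Without this surjectivity of the normal-form map, the assertion that ``the parameter $\ell$ was set up to exhaust exactly the configurations that arise'' is exactly the statement that still needs to be proved.
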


\begin{proof}
Fix $(G, g) \in \mathcal{G}$. Suppose first that $U \subset V(G)$ is a hamiltonian set in $(G, g)$. Let us denote
 \begin{align*}
& N_1 = \{ 1\leqslant i \leqslant m: v_i \in U, v_{i+1} \notin U \},
\cr
& N_2 = \{ 1\leqslant i \leqslant m: v_i \in U, v_{i+1} \in U \},
\cr
& N_3 = \{ 1\leqslant i \leqslant m: v_i \notin U, v_{i+2} \in U \},
\cr
& N_4 = \{ 1\leqslant i \leqslant m: v_i \notin U, v_{i+2} \notin U \}.
\end{align*}

%Now we construct a hamiltonian set of vertices in $(A^2(G),g)$. 
Suppose that $c(A^2(G))=  x_1 \ldots x_m$,  $c^2(A^2(G)) = y_1 \ldots y_m$.  Certainly, $G - g$ is is a configuration of $A^2(G)$. Hence, $c^3(A^2(G)) = v_1 \ldots v_m$ and 
$U \subset V(A^2(G))$.  We can assume, without loss of generality,  that $v_{i}y_{i}y_{i+1}$, $y_{i}x_{i}x_{i+1}$ are faces of the graph $(A^2(G), g)$ (see Fig.~12). Since $U$ induces a tree in $G$, $N_1 \neq \emptyset$. The set $U$ determines the following set $W \subset V(A^2(G))$
$$W = U \cup \{ y_{i+1}: i \in N_1 \} \cup
 \{ x_{i+1}: i \notin N_1 \}.$$

%************************

%***************************
The definitions of the set $W$ gives us the following:
\begin{enumerate}
  \item[(1)] for $i \in N_1$, vertices of the path $v_iy_{i+1}x_{i+2}$ belong to $W$.
\end{enumerate}

Assume that $G_0$ is a subgraph of $A^2(G)$ induced by the set of vertices belonging to $c(A^2(G)) \cup c^2(A^2(G))$. Then, a subgraph of $G_0$ induced by $W \setminus U$ is the union of paths $P_{(k, l)} = y_{k+1}x_{k+2}x_{k+3}\ldots x_l$, where $k$, $l \in N_1$ (or it is the path $P_{(k, k)} = y_{k+1}x_{k+2}\ldots x_mx_1 \ldots x_k$, for $N_1 = \{k\}$). Since $k \in N_1$, condition~$(1)$ shows that $v_k$ is the only vertex belonging to $U$ which is adjacent to $P_{(k, l)}$ (or $P_{(k, k)}$, respectively). Hence, $W$ induces a tree in $A^2(G)$. Since $U$ dominates all faces of $G$, $W$ dominates all faces of $A^2(G)$. Thus, $W$ is a hamiltonian set in $(A^2(G),g)$.

Let $\Gamma_i$, $1\leqslant i \leqslant m$, be a configuration of $A^2(G)$ such that $x_ix_{i+1}g$, $x_ix_{i+1}y_i$, $y_iy_{i+1}x_{i+1}$, $y_iy_{i+1}v_i$ are the only proper faces of $\Gamma_i$. By the definitions of the set $W$ and the patterns $[\Delta^0_k, d, D^0_k]$, $k = 1, 2$, it follows that
\begin{enumerate}
 \item[(2)]
 $\left\{
 \begin{array}{l}
[\Gamma_i, g, W\cap V(\Gamma_i)] = [\Delta^0_1, d, D^0_1], \hbox{ for } i \in N_1,
\\[4pt]
[\Gamma_i, g, W\cap V(\Gamma_i)] = [\Delta^0_2, d, D^0_2], \hbox{ for } i \in N_2,
 \end{array}
 \right.$
 \end{enumerate}

Let $\Gamma(G, U)$ be the set of all functions $\bar{\gamma}: N_1 \cup N_2 \to \{ 0,1,2,3 \}$ satisfying the following conditions:

 $\bar{\gamma} (i) \leqslant 3$, for $d_{A^2(G)}(v_i) = 4$,

 $\bar{\gamma} (i) \leqslant 1$, for $d_{A^2(G)}(v_i) = 5$.

 $\bar{\gamma} (i) = 0$, for $d_{A^2(G)}(v_i) = 6$.
\\
Fix $\bar{\gamma} \in \Gamma(G, U)$.
Certainly, proper regions of $\Gamma_i$, $1\leqslant i \leqslant m$, are pairwise disjoint. Hence, by (2) and Lemma \ref{lem3.3}(1), we can define a replacement operation
 $$\gamma: [\Gamma_i, g,  W\cap V(\Gamma_i)]
 \ {\xrightarrow[]{i \in N_k, \ k = 1, 2}} \
 [ \Delta^{\bar{\gamma}(i)}_k, d, D^{\bar{\gamma}(i)}_k],$$
to obtain a plane triangulation $(\gamma(A^2(G)),g)$ from $(A^2(G),g)$, and the following set
 $$\gamma(W)= (W \setminus \bigcup_{i \in N_1 \cup N_2} V(\Gamma_i)) \cup  \bigcup_{i \in N_1}D^{\bar{\gamma}(i)}_1 \cup \bigcup_{i \in N_2} D^{\bar{\gamma}(i)}_2 \subset V(\gamma(A^2(G))).$$
Notice that $c^3(\gamma(A^2(G))) = c^3(A^2(G)) = v_1 \ldots v_m$. By the definition of the function $\bar{\gamma}$, $d_{\gamma(A^2(G))}(v_i) \leqslant 6$.
Hence, $(\gamma(A^2(G)), g) \in {\mathcal{G}}$.

Recall that $W$ induces a tree in $A^2(G)$. Therefore, by  Lemma~\ref{lem3.3}(1) and Lemma~\ref{lem3.5}(1), $\gamma(W)$ induces a connected subgraph of $\gamma(A^2(G))$. Since the tree $A^2(G)[W]$ has $|W|- 1$ edges, by  Lemma~\ref{lem3.3}(3) and Lemma~\ref{lem3.3}(1), the induced graph $\gamma(A^2(G))[\gamma(W)]$ has
\begin{align*}
e(A^2(G)[W])
&+ \sum_{i \in N_1}\{e({\Delta^{\bar{\gamma}(i)}_1}[D^{\bar{\gamma}(i)}_1]) - e({\Delta^0_1}[D^0_1])\}
\cr
&+ \sum_{i \in N_2}\{e({\Delta^{\bar{\gamma}(i)}_2}[D^{\bar{\gamma}(i)}_2]) - e({\Delta^0_2}[D^0_2])\}
\cr
&= |W| - 1 + \sum_{i \in N_1} \{|D^{\bar{\gamma}(i)}_1|- |D^0_1|\}
\cr
&+ \sum_{i \in N_2} \{|D^{\bar{\gamma}(i)}_2| - |D^0_2|\}
 = |\gamma(W)| - 1
\end{align*}
edges. Hence, $\gamma(W)$ induces a tree in $\gamma(A^2(G))$.

Let us consider a graph $(G, g)$ such that $c(G)$ has length $3$. If every vertex of $c(G)$ has degree $5$ in $G$, then $(A^2(G), g)$ is the only graph which is generated by operations of type $\bar{C}$, $\bar{B}$ from $(A^2(G), g)$. If  $c(G) = v_1v_2v_3$ has a vertex of degree at most $4$, then $(G, g) = (G_3, g)$, or $(G, g) = (F_1, g)$, or $(G, g) = (F_2, g)$, with the hamiltonian set $U = \{v_1, v_2\}$ marked by circles in Fig. ~4, or Fig.~5, respectively.  Notice that $d_{A^2(G)}(v_2) \leqslant 5$ and $2 \in N_1$. If  $(G, g) = (G_3, g)$, or $(G, g) = (F_1, g)$, then vertices $v_1$, $v_2$, $v_3$ are similar in $A^2(G) - g$. Hence,  we can assume  the following:
\begin{enumerate}
\item[(3)]
if  $(G, g) = (G_3, g)$, or $(G, g) = (F_1, g)$, then $\bar{\gamma} (v_2) \geqslant 1$.\end{enumerate}
Since $d_{A^{2}(F_2)}(v_1) = d_{A^{2}(F_2)}(v_3) = 6$, we obtain the following:
\begin{enumerate}
\item[(4)]
every graph of $\mathcal{G}$ which is generated  by operations of type $\bar{C}$ and  $\bar{B}$ from $(A^2(F_2), g)$ is op-equivalent to $(\delta(A^2(F_2)), g)$, for some $\bar{\delta} \in \Gamma(G, U)$.
\end{enumerate}
%Thus, we can assume that $c(G) = v_1v_2v_3$ has a vertex of degree at most $4$ in $G$. Then $(G, g) = (G_3, g)$ or $(G, g) = (F_1, g)$. Since $U$ consists of two vertices of $c(G)$ (say $U = \{v_2,v_3\}$), $v_1 \in N_3$,  $v_2 \in N_2$ and $v_3 \in N_1$. Since these vertices are similar in $G_3 - g$ (or $F_1 - g$, respectively), we can assume that $\bar{\gamma}(2) \geqslant 1$ or $\bar{\gamma}(3) \geqslant 1$. Hence, without loss of generality we can assume the following:
%\begin{enumerate}
%\item[(3)]
%if $c(G)$ has length $3$, then $(G, g) = (G_3, g)$ or $(G, g) = (F_1, g)$, and
%cycles $c(\gamma(A^2(G))$, $c^2(\gamma(A^2(G))$ have lengths at least $4$.
%\end{enumerate}

Since proper regions of configurations $\Gamma_i$ of $A^2(G)$,  $1\leqslant i \leqslant m$, are pairwise disjoint, we can assume the following: 
\begin{enumerate}
 \item[(5)]
 every configuration   $\Gamma_i$ of $A^2(G)$,  for $i \in N_3 \cup N_4$, is a configuration of the graph $\gamma(A^2(G))$.
% faces $x_ix_{i+1}g$, $x_ix_{i+1}y_i$, $y_iy_{i+1}x_{i+1}$, and $y_iy_{i+1}v_i$ of $\Gamma_i$, for $i \in N_3 \cup N_4$, are also faces of $\gamma(A^2(G))$.
\end{enumerate}
Thus, vertices $x_i$, $x_{i+1}$, $y_i$, $y_{i+1}$ of $\Gamma_i$, for  $i \in N_3$,  are also verices of $\gamma(A^2(G))$. Hence, we can define a minimal configuration $\Lambda_i$ of $\gamma(A^2(G))$ such that $x_i$, $x_{i+1}$, $y_i$, $y_{i+1}$ are the only vertices in the proper region of $\Lambda_i$. If $c(G)$ has length~$3$, then, by condition (3) and (4), we can assume that $\bar{\gamma} \neq 0$. Hence, cycles $c(\gamma(A^2(G))$, $c^2(\gamma(A^2(G))$ have lengths at least $4$, and the configuration $\Lambda_i$ of $\gamma(A^2(G))$ do exists. If $i \in N_3$, then $i-1 \in N_1$, and $i+1 \in N_2$. Hence, by the definition of $\Lambda_i$, the definition of $\gamma$, and by Lemma \ref{lem3.4} we obtain:
\begin{enumerate}
\item[(6)]
$[\Lambda_i, g, \gamma(W)\cap V(\Lambda_i)] = [\Delta^0_3, d, D^0_3], \hbox{ for } i \in N_3$.
\end{enumerate}

Let $\Lambda(G, U)$ be the set of all functions  $\bar{\lambda}: N_3 \to \{ 0,1, \ldots, 6 \}$ satisfying the following conditions:

 $\bar{\lambda}(i) \leqslant 6$, for $d_{A^2(G)} (v_i) = 4$,

 $\bar{\lambda}(i)\leqslant 1$, for  $d_{A^2(G)} (v_i) = 5$,

 $\bar{\lambda}(i) = 0$, for  $d_{A^2(G)} (v_i) = 6$,

 $\bar{\lambda}(i)\neq 2$, for $(G, g) = (G_3, g)$.
\\
Fix $\bar{\lambda} \in \Lambda(G, U)$. If $i \in N_3$, then $i-2 \notin N_3$, and $i+2 \notin N_3$. Hence, proper regions of configurations $\Lambda_i$, for $i \in N_3$, are pairwise disjoint. Therfore, by (6) and by Lemma \ref{lem3.3}(1), we can define a replacement operation
$$\lambda: [\Lambda_i, g, \gamma(W) \cap V(\Lambda_i)]
 \ {\xrightarrow[]{i \in N_3}} \
 [ \Delta^{\bar{\lambda}(i)}_3, d, D^{\bar{\lambda}(i)}_3],$$
to obtain a plane triangulation $(\lambda \circ \gamma(A^2(G)),g)$ from $(\gamma(A^2(G)),g)$, and the following set
 $${\lambda \circ \gamma(W)}= (\gamma(W) \setminus \bigcup_{i \in N_3} V(\Lambda_i)) \cup  \bigcup_{i \in N_3}D^{\bar{\lambda}(i)}_3  \subset V(\lambda \circ \gamma(A^2(G))).$$
If $N_3 = \emptyset$ we put $(\lambda \circ \gamma(A^2(G)),g) = (\gamma(A^2(G)),g)$ and ${\lambda \circ \gamma(W)} = \gamma(W)$.

Notice that $c^3(\lambda \circ \gamma(A^2(G)) = c^3(\gamma(A^2(G))) = v_1 \ldots v_m$. By the definition of $\bar{\lambda}$, $d_{\lambda \circ \gamma(A^2(G))}(v_i) \leqslant 6$, for $i \in N_3$. Hence, $(\lambda \circ \gamma(A^2(G)),g) \in {\mathcal{G}}$.

Assume now that the set $U$ is compatible with the orientation of $c(G)$. We will prove the following:
\begin{enumerate}
 \item[(7)]
$\lambda \circ \gamma(W)$ is a hamiltonian set in $(\lambda \circ \gamma(A^2(G)),g)$ which is $(-)$compatible with the orientation of the cycle $c(\lambda \circ \gamma(A^2(G)))$.
\end{enumerate}
First, we prove that $\lambda \circ \gamma(W)$ induces a connected subgraph of  $\lambda \circ \gamma(A^2(G))$. Since $\gamma(W)$ induces a tree in $\gamma(A^2(G))$, by Lemma \ref{lem3.3}(1) and Lemma \ref{lem3.5}(2), it is sufficient to prove the following:
\begin{enumerate}
\item[(8)]
cycles $c(\lambda \circ \gamma(A^2(G)))$ and $c^3(\lambda \circ \gamma(A^2(G))= v_1 \ldots v_m$ are linked by a path with all vertices  belonging to $\lambda \circ \gamma(W)$.
\end{enumerate}
Remark that if $\bar{\lambda}(i)\neq 2$, for some $i \in N_3$, then,  by Lemma \ref{lem3.5}(3), condition (8) holds. Thus, for the graph $(G, g) = (G_3, g)$, condition (8) is satisfied.

If $(G, g) \neq (G_3, g)$, then, by Lemma \ref{lem3.7}, there exists a vertex $v_l \notin U$ such that  $d_{A^2(G)} (v_l) \geqslant  5$. If $l \in N_3$, then $\bar{\lambda}(l)\leqslant 1$. Thus, condition (8) holds.  If $l \in N_4$, then  $l-1 \in N_1$. Thus, by conditions (1), (5) and Lemma \ref{lem3.3}(1), vertices of the path $v_{l-1}y_lx_{l+1}$ belong to $\lambda \circ \gamma(W)$. Hence, condition (8) holds.

Since the tree $\gamma(A^2(G))[\gamma(W)]$ has $|\gamma(W)| - 1$ edges, by Lemma~\ref{lem3.3}(3) and Lemma \ref{lem3.3}(1), the induced graph $\lambda \circ \gamma(A^2(G))[\lambda \circ \gamma(W)]$ has
\begin{align*}
e(\gamma(A^2(G))[\gamma(W)]) + \sum_{i \in N_3}\{e({\Delta^{\bar{\lambda}(i)}_3}[D^{\bar{\lambda}(i)}_3]) - e({\Delta^0_3}[D^0_3])\}
\cr
= |\gamma(W)| - 1 + \sum_{i \in N_3} \{|D^{\bar{\lambda}(i)}_3|- |D^0_3|\}
 = |\lambda \circ \gamma(W)| - 1
\end{align*}
edges. Hence, $\lambda \circ \gamma(W)$ induces a tree in  $\lambda \circ \gamma(A^2(G))$.

Since $W$ dominates all faces in $A^2(G)$, by Lemma \ref{lem3.3}, $\gamma(W)$ dominates all faces in $\gamma(A^2(G))$ and
$\lambda \circ \gamma(W)$  dominates all faces in $\lambda \circ \gamma(A^2(G))$. Furthermore, every vertex of the cycle $c(\gamma(A^2(G)))$ has degree $5$ in $\gamma(A^2(G))$. Thus, $\gamma(W)$ is  $(\pm)$compatible with the orientation of $c(\gamma(A^2(G)))$. Therefore, $\lambda \circ \gamma(W)$ is $(-)$compatible with the orientation of the cycle $c(\lambda \circ \gamma(A^2(G)))$, by Lemma \ref{lem3.6}. Hence, (7) holds.

If $i \in N_4$, then $d_{A^2(G)} (v_i) = d_G (v_i) + 1 = 6$, because $U$ is compatible with the orientation of the cycle $c(G)$. Thus,
% $$ \hbox{if } d_{A^2(G)} (v_i) \leqslant 5, \hbox{ then } i \in {N_1 \cup N_2 \cup N_3}.$$
 if $d_{A^2(G)} (v_i) \leqslant 5$, then $i \in {N_1 \cup N_2 \cup N_3}$.
 %\end{enumerate} %Certainly, by the definition of patterns $[\Delta^j_k, d, D^j_k]$, for $k = 1, 2$, the graph $(\gamma(A^2(G)), g)$ is generated by operations of type $\bar{C}$ and $\bar{B}$ from $(A^2(G), g)$. Moreover, by the definition of patterns $[\Delta^j_3, d, D^j_3]$, the graph $(\lambda \circ \gamma(A^2(G)),g)$ is generated by operations of type $\bar{C}$, $\bar{B}$ and $C$ from $(\gamma(A^2(G)),g)$.
Hence, by the definitions of $\gamma$ and $\lambda$, it is not difficult to check the following:
%\begin{enumerate}
%\item[(8)]
%if $N_3 = \emptyset$, then every graph of $\mathcal{G}$ which is generated by operations of type $\bar{C}$ and $\bar{B}$ from $(A^2(G), g)$ is op-equivalent to $(\gamma(A^2(G)), g)$, for some $\bar{\gamma} \in \Gamma(G, U)$.
%\end{enumerate}

%Certainly, by the definition of patterns $[\Delta^j_3, d, D^j_3]$, $(\lambda \circ \gamma(A^2(G)),g)$ is generated by operations of type $\bar{C}$, $\bar{B}$ and $C$ from $(\gamma(A^2(G)),g)$.
%By $(7)--(8)$ and by the definitions of $\lambda$, we obtain the following:
\begin{enumerate}
\item[(9)]
if $(G, g) \neq (G_3, g)$, then every graph of $\mathcal{G}$ which is generated  by operations of type $\bar{C}$ and  $\bar{B}$ from $(A^2(G), g)$ is op-equivalent to $(\lambda \circ \gamma(A^2(G)), g)$, for some $\bar{\gamma} \in \Gamma(G, U)$ and $\bar{\lambda} \in \Lambda(G, U)$.
\end{enumerate}

%**********************************************************
Let $(R, g) \in \cal{G}$ be a graph defined as the graph $R - g \in \cal{F}$ shown in Fig.~13. Notice that it is generated by operations of type $\bar{C}$ from  $(A^2(G_3), g)$. Suppose that $c^{3}(A^2(G_3)) =  c(G_3) = v_1v_2v_3$ and $U = \{v_1, v_2\}$ is a hamiltonian set in $(G_3, g)$. Hence, $3 \in N_3$ and $\bar{\lambda}(3)\neq 2$. Thus, the graph $(R, g)$ is not op-equivalent to $(\lambda \circ \gamma(A^2(G_3), g)$, for any $\bar{\gamma} \in \Gamma(G_3, U)$ and $\bar{\lambda} \in \Lambda(G_3, U)$. Since vertices $v_1$, $v_2$ and $v_3$ are similar in $A^2(G_3) - g$, $1 \in N_2$ and $2 \in N_1$, it is easy to check the following:
\begin{enumerate}
\item[(10)]
every graph of  $\mathcal{G}$, different than $ (R, g)$, which is generated by operations of type $\bar{C}$ and  $\bar{B}$ from  $(A^2(G_3), g)$ is op-equivalent to $(\lambda \circ \gamma(A^2(G_3)), g)$, for some $\bar{\gamma} \in \Gamma(G_3, U)$ and $\bar{\lambda} \in \Lambda(G_3, U)$.
\end{enumerate}
Certainly, $(R, g)$ has a hamiltonian set of vertices, marked by circles in Fig.~13, which is ($\pm$)compatible with the orientation of the cycle $c(R)$.

Fix $(H_0, g)= (\lambda_0 \circ \gamma_0(A^2(G)), g)$, and suppose that $X_0 = \lambda_0 \circ \gamma_0(W)$. Let $c^2(H_0) = t_1 \ldots t_n$. As in Definition \ref{def3.2}, we define sets $M_1, \ldots, M_4$, for the graph $(H_0, g)$ and the set $X_0$. Similarly, we define the set $\Omega(H_0, X_0)$ and the replacement operation $\omega$, for $\bar{\omega} \in \Omega(H_0, X_0)$.

Let us consider a vertex $t_m \in c^2(H_0)$ of degree $d_{H_0}(t_m) \leqslant 5$.  By the definition of $\lambda_0$ and  $\gamma_0$ the vertex $t_m$ belongs to a configuration $\Gamma$ of $(H_0, g)$ such that 
$$ [\Gamma, g, X_0 \cap V(\Gamma)] = [\Delta^{\bar{\gamma}_0(l)}_k, d, D^{\bar{\gamma}_0(l)}_k], \hbox{ for some } l = l(m) \in N_k, \ k = 1, 2,$$
or 
$$ [\Gamma, g, X_0 \cap V(\Gamma)] = [\Delta^{\bar{\lambda}_0(l)}_3, d, D^{\bar{\lambda}_0(l)}_3], \hbox{ for some } l = l(m) \in N_3.$$
By the definition of patterns $[\Delta^j_k, d, D^j_k]$, we obtain:
\begin{align*}
\hbox{if } &l \in N_1 \cup N_2, \hbox{ then } m \in M_1 \cup M_2,
\cr
\hbox{if } &l \in N_3 \hbox{ and } \bar{\lambda}_0(l) \neq 2, \ \bar{\lambda}_0(l) \neq 3, \hbox{ then } m \in M_1 \cup M_2 \cup M_3,
\cr
\hbox{if } &l \in N_3 \hbox{ and } \bar{\lambda}_0(l) = 2 , \hbox{ then } (C_{t_m}(H_0), g) = (\lambda \circ \gamma_0(A^2(G)), g), \hbox{ where }
\cr
&\bar{\lambda} \in \Lambda(G, U) \hbox{ is a function: } \bar{\lambda}(l) = 4 \hbox{ or } 5  \hbox{ and } \bar{\lambda}(i) = \bar{\lambda}_0(i),  \hbox{ for }  i \in  N_3  \setminus \{l\}.
\cr
\hbox{if } &l \in N_3 \hbox{ and } \bar{\lambda}_0(l) = 3, \hbox{ then } (C_{t_m}(H_0), g) = (\lambda \circ \gamma_0(A^2(G)), g), \hbox{ where }
\cr
&\bar{\lambda} \in \Lambda(G, U) \hbox{ is the function: } \bar{\lambda}(l) = 6  \hbox{ and } \bar{\lambda}(i) = \bar{\lambda}_0(i), \hbox{ for }  i \in  N_3  \setminus \{l\}.
\end{align*}
Hence, by $(9)$--$(10)$ and the definition of $\omega$, we obtain the following:
\begin{enumerate}
\item[(11)]
every graph of $\mathcal{G}$, different than $(R, g)$, which is generated  by operations of type $\bar{C}$, $\bar{B}$, $C$, $B$ from $(A^2(G), g)$ is op-equivalent to one of the following graphs:
$$(H, g ) = (\lambda \circ \gamma(A^2(G)), g), \hbox{ for some }\bar{\gamma} \in \Gamma(G, U),  \bar{\lambda} \in \Lambda(G, U),$$
or
$$(\omega(H), g), \hbox{ for some } \bar{\omega} \in \Omega(H, X), \hbox{ where } X = \lambda \circ \gamma(W).$$
\end{enumerate}
By (7), $X = \lambda \circ \gamma(W)$ is a hamiltonian set in $(H, g)= (\lambda \circ \gamma(A^2(G)), g) $ which is $(-)$compatible with the orientation of $c(H)$. Hence, by Lemma \ref{lem3.2}, the graph $(\omega(H), g)$ has a hamiltonian set $\omega(X)$ which is $(-)$compatible with the orientation of $c(\omega(H))$. In view of $(11)$, the proof is completed.
\qed \end{proof}

\begin{lemma}\label{lem3.8}
Every graph $(K,g) \in {\mathcal{G}}$ which is generated by operations of type $\bar{C}$, $\bar{B}$, $C$, $B$ from $(A^2(P), g)$ (or $(A^2(Q), g)$) has a hamiltonian set of vertices which is $(\pm)$compatible with the orientation of $c(K)$.
\end{lemma}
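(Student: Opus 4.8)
The plan is to mimic the structure of the proof of Theorem~\ref{theo3.1}, but starting from the specific graphs $(A^2(P), g)$ and $(A^2(Q), g)$ and tracking the stronger $(\pm)$compatibility condition instead of mere $(-)$compatibility. First I would recall from Example~\ref{exam2.1} that $(P,g)$ is generated by operations of type $C$ from $(A(G_3), g)$ and $(Q,g)$ from $(A(F_1),g)$; this means $P$ and $Q$ have height $2$, so $A^2(P)$ and $A^2(Q)$ have height $4$ and their description near the outer cycles is completely explicit. I would fix the hamiltonian set $U$ in $(P,g)$ (respectively $(Q,g)$) marked by circles in Fig.~2 — the one which is \emph{not} compatible with the orientation of $c(P)$ (respectively $c(Q)$) — and then, exactly as in the proof of Theorem~\ref{theo3.1}, build the sets $N_1, N_2, N_3, N_4$, the enlarged hamiltonian set $W \subset V(A^2(P))$ via the formula $W = U \cup \{y_{i+1} : i \in N_1\} \cup \{x_{i+1} : i \notin N_1\}$, and the replacement operations $\gamma$ and $\lambda$ producing $(\lambda \circ \gamma(A^2(P)), g)$ with hamiltonian set $\lambda \circ \gamma(W)$.

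Next I would verify the key upgrade: that $W$ (hence $\gamma(W)$, hence $\lambda\circ\gamma(W)$) is actually $(\pm)$compatible, not just $(-)$compatible, with the orientation of the relevant outer cycle. The point is that although $U$ itself fails compatibility in $(P,g)$ or $(Q,g)$, after applying $A^2$ the outer cycle $c(A^2(P))$ consists entirely of degree-$5$ vertices (this is the general fact used at the end of the proof of Theorem~\ref{theo3.1}: every vertex of $c(\gamma(A^2(G)))$ has degree $5$), so compatibility and $(-)$compatibility are both vacuously inherited there. The delicate case is the second-outermost cycle, where the patterns $[\Delta^j_k, d, D^j_k]$ govern behaviour; I would check, using the explicit circled sets in Figs.~10--11 together with Lemma~\ref{lem3.6}, that the construction in fact respects compatibility from both sides. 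Then I would invoke Lemma~\ref{lem3.2} — more precisely its ``Moreover'' clause — with the restricted function class $\Omega^*(H_0, X_0)$ in place of $\Omega(H_0, X_0)$, to conclude that $\omega(X_0)$ is $(\pm)$compatible whenever $X_0$ is.

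Finally I would close the induction over the generating sequence. By Theorem~\ref{theo3.1} applied to $(P,g)$ and $(Q,g)$ (both of which have hamiltonian sets compatible with the orientation of $c(P)$, $c(Q)$ respectively — namely the ones obtained by a rotation of the circled set, since $P$ and $Q$ were built by type-$C$ operations from $A(G_3)$ and $A(F_1)$ whose outer cycles have a non-trivial rotational symmetry), every graph generated by $\bar{C}, \bar{B}, C, B$ from $(A^2(P),g)$ or $(A^2(Q),g)$ already has a hamiltonian set that is $(-)$compatible; the present lemma sharpens this to $(\pm)$compatible. I would enumerate, as in steps (9)--(11) of the proof of Theorem~\ref{theo3.1}, which graphs arise, check that no analogue of the exceptional graph $(R,g)$ appears here (or handle it separately, exhibiting its circled $(\pm)$compatible set directly as in Fig.~13), and verify that each of $\gamma$, $\lambda$, $\omega$ preserves $(\pm)$compatibility under the restricted parameter choices. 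The main obstacle I expect is precisely this bookkeeping: one must show that at \emph{every} stage the parameter functions $\bar\gamma, \bar\lambda, \bar\omega$ can be taken in the restricted classes (the ``$\bar\omega(i) \leqslant 2$ for $d_H(t_i) = 4$ and $i \in M_1$'' type constraints defining $\Omega^*$) without losing any graph in $\cal{G}$ generated from $(A^2(P),g)$ or $(A^2(Q),g)$ — i.e. that the degree-$5$ structure of the outermost cycle forced by the $A^2$ operation leaves enough room, on the right-hand side of each replacement, to keep both the $(+)$ and the $(-)$ implications satisfied simultaneously. This is where the specific geometry of $P$ and $Q$ (as opposed to a general $(G,g)$) must be used.
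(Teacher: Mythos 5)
Your first two paragraphs follow the paper's proof almost exactly: take the circled (non-compatible) set $U$ in $(P,g)$, form $W$, $\gamma$, $\lambda$ as in Theorem~\ref{theo3.1}, observe that all vertices of $c(\lambda\circ\gamma(A^2(P)))$ have degree $5$ so that $(\pm)$compatibility comes for free on the outer cycle, and then finish with $\Omega^*$ and the ``Moreover'' clause of Lemma~\ref{lem3.2}. That is the right skeleton. But your third paragraph contains a genuine error: you assert that $(P,g)$ and $(Q,g)$ have hamiltonian sets compatible with the orientation of $c(P)$, $c(Q)$ (``obtained by a rotation of the circled set''), and you then invoke Theorem~\ref{theo3.1} for them. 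This is false, and it is the entire reason Lemma~\ref{lem3.8} exists as a separate statement rather than being a corollary of Theorem~\ref{theo3.1}: the introduction points out that the circled sets of $(P,g)$ and $(Q,g)$ are \emph{not} compatible, and these two graphs are excluded from Theorem~\ref{theo3.2} precisely because no compatible hamiltonian set is available to feed into the hypothesis of Theorem~\ref{theo3.1}. The rotational symmetry you appeal to is a symmetry of $A(G_3)$ and $A(F_1)$, not of $P$ and $Q$ after the type-$C$ operations have been applied, so rotating the circled set does not produce a compatible set.

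The consequence is not merely cosmetic. In the proof of Theorem~\ref{theo3.1}, compatibility of $U$ is used exactly once in an essential way: to show that $i \in N_4$ forces $d_{A^2(G)}(v_i) = 6$, which is what makes the enumeration (9) of all graphs generated by $\bar{C}$, $\bar{B}$ exhaustive. For $(P,g)$ this fails: $v_1 \notin U$, $v_3 \notin U$, so $1 \in N_4$ while $d_{A^2(P)}(v_1) = 5$, and consequently there is a graph generated by a type-$\bar{C}$ operation at $v_1$ that is \emph{not} op-equivalent to any $(\lambda\circ\gamma(A^2(P)),g)$. The paper names it: the graph $(D,g)$ of Fig.~14, which must be handled by exhibiting its $(\pm)$compatible hamiltonian set directly; and then the graphs generated by $C$, $B$ from $(D,g)$ require a second enumeration with their own exception $(E,g)$. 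Your hedge ``check that no analogue of $(R,g)$ appears here (or handle it separately)'' points in the right direction, but the analogues do appear, they appear \emph{because} $U$ is not compatible, and the two-stage structure (first the complement of $(D,g)$ among graphs generated from $(A^2(P),g)$, then the complement of $(E,g)$ among graphs generated from $(D,g)$) is the substantive content of the proof that your plan leaves unspecified.
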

\begin{proof}
Let $(D, g)$ and $(E, g) \in \cal{G}$ be graphs defined by the graphs $D - g$ and $E - g  \in \cal{F}$ in Fig.~14. These graphs have hamiltonian sets $X_D \subset V(D)$ and $X_E \subset V(E)$, marked by circles in in Fig.~14, which are $(\pm)$compatible with the orientation of the cycles $c(D)$ and $c(E)$, respectively. Notice that $(D, g)$ is generated by operations of type $\bar{C}$ from $(A^2(P), g)$, and $(E, g)$  by operations of type $C$ from $(D, g)$.

It is sufficient to prove the following two conditions:
\begin{enumerate}
\item [(i)] Every graph $(K,g) \in {\mathcal{G}}$,  $(K, g)\neq (D, g)$, which is generated by operations of type $\bar{C}$, $\bar{B}$, $C$, $B$ from $(A^2(P), g)$ has a hamiltonian set of vertices which is $(\pm)$compatible with the orientation of $c(K)$.
\end{enumerate}
\begin{enumerate}
\item [(ii)]  Every graph $(K,g) \in {\mathcal{G}}$,  $(K, g)\neq (E, g)$, which is generated by operations of type  $C$, $B$ from $(D, g)$ has a hamiltonian set of vertices which is $(\pm)$compatible with the orientation of $c(K)$.

 \end{enumerate}
Suppose that $U$ is a hamiltonian set in $(P, g)$ marked by circles in Fig.~2.  Let us denote $c(P) = v_1 \ldots  v_6$, where vertices $v_1$, $v_3 \notin U$ and  other vertices of $c(P)$ belong to $U$. We define the set $N_k$, for $k = 1, 2, 3, 4$, and the set $W \subset V(A^2(P))$ determined by $U$, as in the proof of Theorem \ref{theo3.1}.  Notice that $W$ is a hamiltonian set of vertices in $(A^2(P), g)$ marked by circles in Fig.~14. We define $\Gamma(P, U)$,  $\Lambda(P, U)$, replacement operations $\gamma$ and $\lambda$, for $\bar{\gamma} \in \Gamma(P, U)$ and $\bar{\lambda} \in \Lambda(P, U)$, and the plane triangulation $(\lambda \circ \gamma(A^2(P)), g)$ with the set $\lambda \circ \gamma(W)$, as in the proof of Theorem \ref{theo3.1}.
%% koniec przesuniętego akapitu
\vfill

%****************************************

Remark that vertices $v_1$, $v_3$, $v_5$ are of degree $5$ in $A^2(P)$, and the remaining vertices of $c^3(A^2(P))$ are of degree $6$. Since $5 \in N_2$, $3 \in N_3$,  $\bar{\gamma}(5)\leqslant 1$  and $\bar{\lambda}(3) \leqslant 1$,  every vertex of $c(\lambda \circ \gamma(A^2(P)), g)$ has degree $5$.  Furthermore, since $\bar{\lambda}(3)~\neq~
 2$, by Lemma \ref{lem3.5}, the set  $\lambda \circ \gamma(W)$ induces a connected subgraph of $\lambda \circ \gamma(A^2(P))$. Hence, by Lemma \ref{lem3.3}, it is a hamiltonian set in $(\lambda \circ \gamma(A^2(P)), g)$ which  is $(\pm)$compatible with the orientation of $c(\lambda \circ \gamma(A^2(P)))$.

Since $v_1$ is of degree $5$ in $A^2(P)$ and $1 \in N_4$, $(D, g)$ is not op-equivalent to $(\lambda \circ \gamma(A^2(P)), g)$, for any function $\bar{\gamma} \in \Gamma(P, U)$ and $\bar{\lambda} \in \Lambda(P, U)$. Notice that vertices  $v_1$, $v_3$, $v_5$ are similar in $A^2(P) - g$. Hence, every graph of $\mathcal{G}$, different than $(D, g)$, which is generated by operations of type  $\bar{C}$, $\bar{B}$  from $(A^2(P), g)$ is op-equivalent to $(\lambda \circ \gamma(A^2(P)), g)$, for some $\bar{\gamma} \in \Gamma(P, U)$   and  $\bar{\lambda} \in \Lambda(P, U)$.

Fix  $(H, g)= (\lambda_0 \circ \gamma_0(A^2(G)), g)$, and suppose that $X = \lambda_0 \circ \gamma_0(W)$. Let $c^2(H) = t_1 \ldots t_n$. As in Definition \ref{def3.2}, we define sets $M_1, \ldots, M_4$, for the graph $(H,  g)$ and the set $X$. Similarly, we define the set $\Omega^*(H, X)$ and the replacement operation $\omega$, for $\bar{\omega} \in \Omega^*(H, X)$.

Let us consider a vertex $t_m \in c^2(H)$ of degree $d_{H}(t_m) \leqslant 5$.  Recall that  $v_5$,~$v_3$ are of degree $5$ in $A^2(P)$, $5 \in N_2$ and $3 \in N_3$.  Hence, by the definition of  $\gamma_0$ and $\lambda_0$ the vertex $t_m$ belongs to a configuration $\Gamma$ of $(H, g)$ such that 
$$ [\Gamma, g, X \cap V(\Gamma)] = [\Delta^{\bar{\gamma}_0(5)}_2, d, D^{\bar{\gamma}_0(5)}_2] = [\Delta^1_2, d, D^1_2],$$ 
or
$$[\Gamma, g, X \cap V(\Gamma)] =  [\Delta^{\bar{\lambda}_0(3)}_3, d, D^{\bar{\lambda}_0(3)}_3] = [\Delta^1_3, d, D^1_3].$$ 
Hence, $m \in M_1 \cup M_2$ and $d_{H}(t_m) = 5$. Thus, by the definition of $\omega$, every graph $(K, g) \in \mathcal{G}$ which is generated by operations of type $C$, $B$ from $(H, g)$ is op-equivalent to $(\omega(H), g)$,  for some $\bar{\omega} \in \Omega^*(H, X)$. By Lemma \ref{lem3.2}, $(K, g)$ has a hamiltonian set which is $(\pm)$compatible with the orientation of $c(K)$. Hence, (i) holds.

Suppose that $X_D$  is a hamiltonian set of vertices in $(D, g)$ marked by circles in  Fig.~14. Let us denote $c^2(D) = t_1t_2 \ldots t_9$, where vertices $t_1$, $t_6$, $t_9 \in X_D$ and  other vertices of $c^2(D)$ don't belong to $X_D$.  Let us define sets $M_1, \ldots, M_4$, the set $\Omega^*(D, X_D)$ and the replacement operation $\omega$, for $\bar{\omega} \in \Omega^*(D, X_D)$, as in Definition \ref{def3.2}.

Remark that $t_1$, $t_4$, $t_7$ are of degree $5$  in $D$, and the remaining vertices of $c^2(D)$ are of degree $6$. Since $t_1 \in X_D$, $(E, g)$ is not op-equivalent to $(\omega(D), g)$,  for ${\bar{\omega} \in \Omega^*(D, X_D)}$. Notice that vertices  $t_1$, $t_4$, $t_7$  are similar in $D - g$, and ${4, 7 \in M_2}$. Hence, every graph $(K, g) \in \mathcal{G}$, $(K, g)\neq (E, g)$, which is generated by operations of type $C$, $B$ from $(D, g)$ is op-equivalent to $(\omega(D), g)$,  for some $\bar{\omega} \in \Omega^*(D, X_D)$. By Lemma~\ref{lem3.2}, $(K, g)$ has a hamiltonian set which is $(\pm)$compatible with the orientation of $c(K)$. Hence, (ii) holds.
\qed \end{proof}

\begin{lemma}\label{lem3.9}
Let $(F,g) \in \cal {G}$, and suppose that $(F, g) = (r(F), g)$, where $r(G)$ is a mirror reflection of $F$.  Let ${\cal {F}}_m$, $m \geqslant 2$, be a family of all graphs of height $m$, different than $(P, g)$ and $(Q, g)$, which are generated by operations of type $A$, $B$, $C$ from $(F, g)$. If every graph $(K,g) \in {\cal {F}}_m$ has a hamiltonian set which is ($-$)compatible with the orientation of $c(K)$, then every graph $(G, g) \in {\cal {F}}_m$ has a hamiltonian set which is compatible with the orientation of $c(G)$.
\end{lemma}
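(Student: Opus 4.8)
The plan is to turn the mirror symmetry $(F,g)=(r(F),g)$ into a symmetry of the whole class $\mathcal{F}_m$, and then use it to convert a $(-)$compatible hamiltonian set of the reflected graph into a compatible hamiltonian set of the original graph. Throughout, $r$ is regarded as an involution of the plane, so that $r\circ r=\mathrm{id}$ and $r(r(K))=K$ for every plane graph $K$.

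First I would record how $r$ interacts with the data attached to a graph $(G,g)\in\mathcal{G}$. Restricted to $G$, the map $r$ is a graph isomorphism onto $r(G)$ fixing $g$, so it preserves distances to $g$; hence $V^{n}(r(G),g)=r(V^{n}(G,g))$ for all $n$, in particular $h(r(G))=h(G)$, and the counterclockwise outer cycle of $r(G)$ is the image under $r$ of $c(G)$ traversed in the opposite cyclic order. Chasing this reversal through the definitions of compatible and $(-)$compatible sets gives: for $X\subset V(r(G))$, the set $X$ is $(-)$compatible with the orientation of $c(r(G))$ if and only if $r(X)\subset V(G)$ is compatible with the orientation of $c(G)$; and, since $r$ carries induced trees to induced trees and face-dominating sets to face-dominating sets and fixes $g$, the set $X$ is a hamiltonian set in $(r(G),g)$ if and only if $r(X)$ is a hamiltonian set in $(G,g)$.

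Next I would prove that $r$ maps $\mathcal{F}_m$ onto itself. Given $(G,g)\in\mathcal{F}_m$, fix a sequence of operations of type $A$, $B$, $C$ producing $(G,g)$ from $(F,g)$ and apply $r$ to it term by term. Operation $A$ is defined symmetrically, so $(r(A(G')),g)=(A(r(G')),g)$; an operation of type $B$ or $C$ consists of choosing an edge $x_{i}x_{i+1}$ of the outer cycle meeting degree conditions that are invariant under $r$, and then substituting the pattern $[\Delta_{1},d]$, respectively $[\Delta_{2},d]$, of Fig.~3, each of which is symmetric under reflection; hence the $r$-image of an operation of type $B$ (of type $C$) is again an operation of type $B$ (of type $C$), performed at the corresponding edge of $c(r(G'))$. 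Since $(r(F),g)=(F,g)$, this shows $(r(G),g)$ is generated from $(F,g)$ by operations of type $A$, $B$, $C$, with $h(r(G))=h(G)=m$. Finally, from Fig.~2 one has $(r(P),g)=(P,g)$ and $(r(Q),g)=(Q,g)$ (a consequence of the mirror symmetry of $G_{3}$ and of $F_{1}$ recorded in Corollary~\ref{coro2.1}, together with the symmetry of the operations of type $C$ that define $(P,g)$ and $(Q,g)$), so that $(r(G),g)\in\{(P,g),(Q,g)\}$ would force $(G,g)=(r(r(G)),g)\in\{(P,g),(Q,g)\}$, contrary to $(G,g)\in\mathcal{F}_m$. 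Therefore $(r(G),g)\in\mathcal{F}_m$.

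The conclusion is then immediate: given $(G,g)\in\mathcal{F}_m$, the graph $(r(G),g)$ lies in $\mathcal{F}_m$, so by hypothesis it has a hamiltonian set $X$ which is $(-)$compatible with the orientation of $c(r(G))$, whence by the first step $r(X)$ is a hamiltonian set in $(G,g)$ which is compatible with the orientation of $c(G)$. The one point that genuinely requires checking — and which I expect to be the main obstacle — is the closure of $\mathcal{F}_m$ under $r$; it reduces to the routine inspections that the substituted patterns $\Delta_{1}$, $\Delta_{2}$ of Fig.~3 are reflection-symmetric and that the exceptional graphs $(P,g)$, $(Q,g)$ are mirror-symmetric. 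Everything else is transport of structure along the isomorphism $r$.
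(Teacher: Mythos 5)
Your proposal is correct and follows essentially the same route as the paper: reflect $(G,g)$, observe that $(r(G),g)$ again lies in ${\cal F}_m$ because $(r(F),g)=(F,g)$ and the generating operations and exceptional graphs are preserved under reflection, and then transport the $(-)$compatible hamiltonian set of $(r(G),g)$ back through $r$ to obtain a compatible one for $(G,g)$. You merely spell out in more detail the closure of ${\cal F}_m$ under $r$ and the orientation-reversal bookkeeping that the paper leaves implicit.
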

\begin{proof}
Let $(G, g) \in {\cal {F}}_m$. Notice that $(r(G), g)$ is generated by operations of type $A$, $B$, $C$ from $(r(F), g) = (F, g)$. Furthermore, $h(r(G)) = m$, and $(P, g) \neq (r(G), g) \neq (Q, g)$. Hence, $(r(G), g) \in {\cal {F}}_m$ and it has a hamiltonian set which is ($-$)compatible with the orientation of $c(r(G))$. Thus, $(G, g)$ has a hamiltonian set  which is compatible with the orientation of $c(G)$. 
\qed \end{proof}

In the proof of Theorem \ref{theo3.2} given below we shall use Theorem \ref{theo4.1} which will be proved in the next section.

\begin{theorem}\label{theo3.2}
If $(G,g) \in \cal {G}$ is different than $(P, g)$ and $(Q, g)$, then it has a hamiltonian set of vertices which is compatible with the orientation of $c(G)$.
\end{theorem}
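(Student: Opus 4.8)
The plan is to prove the theorem by induction on the height $h(G)$, using the generation results of Section 2 together with the machinery for propagating hamiltonian sets through the operations $A$, $B$, $C$, $\bar B$, $\bar C$ developed in Section 3.

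First I would dispose of the base cases $h(G) \leqslant 2$. By Corollary \ref{coro2.1} and Theorem \ref{theo2.1}, every graph of height at most $2$ is, up to op-equivalence, generated by operations of type $C$ and $B$ from one of the explicit starting graphs $(G_3, g)$, $(J, g)$, $(G_n, g)$, $(r(G_{2n}), g)$, $(F_n, g)$, $(A(G_3),g)$, $(A(J),g)$, $(A(G_n),g)$, $(r(A(G_{2n})),g)$, $(A(F_n),g)$. For each of these starting graphs I would exhibit by hand a hamiltonian set compatible with the orientation of the outer cycle (these are small, structured graphs, e.g. $F_n$, $G_n$ are the standard pictures of Figs.~4--5), and then I would invoke Theorem \ref{theo4.1} (the result deferred to the next section) to propagate compatibility through the $B$- and $C$-operations. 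Since $(P,g)$ and $(Q,g)$ are exactly the two exceptional descendants of $(A(G_3),g)$ and $(A(F_1),g)$ identified in Example \ref{exam2.1}, they are excluded from the statement, so no contradiction arises there.

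For the inductive step, let $h(G) = m \geqslant 3$ and assume the theorem for all graphs of smaller height. By Theorem \ref{theo2.1} and Corollary \ref{coro2.3}, $(G,g)$ lies in some ${\cal G}_n$, and by Theorem \ref{theo2.2} either (1) $(G^{-2},g) \in {\cal G}_n$ with $h(G^{-2}) = m-2 < m$ and $(G,g)$ is generated by operations of type $\bar C$, $\bar B$, $C$, $B$ from $(A^2(G^{-2}),g)$, or (2) $n \geqslant 1$ and $(G,g)$ is generated by these operations from $(A(F_n),g)$. In case (1), by the induction hypothesis $(G^{-2},g)$ — provided it is not $(P,g)$ or $(Q,g)$ — has a hamiltonian set compatible with the orientation of $c(G^{-2})$; then Theorem \ref{theo3.1} yields that $(G,g)$ has a hamiltonian set which is $(-)$compatible with $c(G)$. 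To upgrade $(-)$compatibility to (full) compatibility I would apply Lemma \ref{lem3.9} with $F$ an appropriate mirror-symmetric ancestor: the key observation is that $r(G^{-2})$, $r$ the mirror reflection fixing $g$, has the same height and, since $(P,g)$, $(Q,g)$ are either symmetric or excluded, is again covered by the induction hypothesis, so reflecting converts a $(-)$compatible set into a compatible one. The special sub-cases $(G^{-2},g) = (P,g)$ or $(Q,g)$ are handled separately by Lemma \ref{lem3.8}, which directly produces $(\pm)$compatible hamiltonian sets for all descendants of $(A^2(P),g)$ and $(A^2(Q),g)$. Case (2) is the analogous argument with $(F_n,g)$ as ancestor: $A(F_n)$ has height $2$, a hamiltonian compatible set for it was produced in the base case, so Theorem \ref{theo3.1} (applied to $(F_n,g)$, noting $A^2(F_n)$ is reachable) combined with Lemma \ref{lem3.9} gives the conclusion.

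The main obstacle I expect is the bookkeeping around the two exceptional graphs $(P,g)$, $(Q,g)$ and the graph $(R,g)$ appearing in the proof of Theorem \ref{theo3.1}. One must check carefully that excluding $(P,g)$ and $(Q,g)$ from the statement is exactly enough: when we pass from $(G,g)$ down to $(G^{-2},g)$ or to a mirror image, we must be sure the smaller graph is still not one of the excluded graphs (or, if it is, that Lemma \ref{lem3.8} covers it), and similarly that $(R,g)$, which is generated from $(A^2(G_3),g)$ but is not captured by the $\lambda\circ\gamma$ normal form, does possess a $(\pm)$compatible hamiltonian set (as noted in the proof of Theorem \ref{theo3.1}). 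Threading the distinction between compatible, $(-)$compatible and $(\pm)$compatible correctly through the induction — using $(\pm)$compatibility precisely when a set must later be reflected by $r$ — is the delicate part; the rest is an assembly of Theorems \ref{theo2.1}, \ref{theo2.2}, \ref{theo3.1}, \ref{theo4.1} and Lemmas \ref{lem3.8}, \ref{lem3.9}.
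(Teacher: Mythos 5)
Your overall strategy---induction on $h(G)$ with Theorem \ref{theo4.1} as the base, Theorem \ref{theo2.2} for the decomposition, Theorem \ref{theo3.1} and Lemma \ref{lem3.8} for the inductive step, and Lemma \ref{lem3.9} to upgrade $(-)$compatibility to full compatibility---is exactly the paper's, and your treatment of case (1) of Theorem \ref{theo2.2} (induction hypothesis applied to $(G^{-2},g)$, Lemma \ref{lem3.8} for the exceptional ancestors $(P,g)$ and $(Q,g)$, then Lemma \ref{lem3.9} using $(r(G_3),g)=(G_3,g)$ and $(r(F_n),g)=(F_n,g)$) is sound. The genuine gap is in your treatment of case (2). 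You place the base case at $h(G)\leqslant 2$ and then, for $m\geqslant 3$, propose to handle case (2) by applying Theorem \ref{theo3.1} to $(F_n,g)$, ``noting $A^2(F_n)$ is reachable.'' This fails: $F_n$ has height $2$, so $A(F_n)$ has height $3$ (not $2$, as you assert), and case (2) concerns graphs generated from $(A(F_n),g)$, not from $(A^2(F_n),g)$. Theorem \ref{theo3.1} only covers descendants of a graph of the form $A^2(G)$ with $G\in{\cal G}$ possessing a compatible hamiltonian set, and $A(F_n)$ is in general not of that form: by Lemma \ref{lem2.1}, for $n\geqslant 3$ the graphs $F_n$ are precisely the alternative to the case $(G,g)=(A(G^{-1}),g)$, which is why Theorem \ref{theo2.2} must list their descendants as a separate case in the first place. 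The height-$3$ graphs of ${\cal G}_n$, $n\geqslant 1$, arising from $(A(F_n),g)$ are therefore not reached by your inductive step at all.

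The paper avoids this by taking the base case up to $h(G)\leqslant 3$ for $n\geqslant 1$ (and $h(G)\leqslant 2$ for $n=0$), which is exactly what Theorem \ref{theo4.1} provides; the descendants of $(A(F_n),g)$ are handled there by a bespoke construction (Lemma \ref{lem4.8}, using the graphs $F^j_n$ and the patterns $\Delta^j_5$, $\Delta^j_6$), not by Theorem \ref{theo3.1}. Since the operations $\bar C$, $\bar B$, $C$, $B$ preserve height and $A(F_n)$ has height $3$, case (2) of Theorem \ref{theo2.2} occurs only at height exactly $3$; with the stronger base case the inductive step for $m>3$ meets only case (1), and the rest of your argument then goes through as in the paper. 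You should also correct the base-case paragraph accordingly: $(A(F_n),g)$ and the other $A$-images you list are height-$3$ (respectively height-$2$) graphs, so they belong to the strengthened base case settled by Theorem \ref{theo4.1}, not to a list of ``height at most $2$'' starting graphs.
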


\begin{proof}
The proof is by induction with respect to  height $h(G)$. Let ${(G, g) \in {\cal {G}}_n}$ be any graph of height $m$, which is different than $(P, g)$ and $(Q, g)$. By Theorem~\ref{theo4.1}, we can assume that $m > 3$, for $n > 0$ ($m > 2$, for $n = 0$). Hence, by  Theorem \ref{theo2.2}, $(G^{-2}, g) \in {\cal {G}}_n$ and $(G, g)$ is generated  by operations $\bar{C}$, $\bar{B}$, $C$, $B$ from $(A^2(G^{-2}), g)$. By induction, if $(G^{-2}, g)$ is different than $(P, g)$ and $(Q, g)$, then it  has a hamiltonian set which is compatible with the orientation of $c(G^{-2})$. Thus, by Theorem \ref{theo3.1} and Lemma \ref{lem3.8}, $(G, g)$ has a hamiltonian set which is ($-$)compatible with the orientation of $c(G)$. Hence, by Lemma \ref{lem3.9}, $(G, g)$ has a hamiltonian set which is compatible with the orientation of $c(G)$, because $(r(G_3), g) = (G_3, g)$, and $(r(F_n), g) = (F_n, g)$, for $n \geqslant 1$.
\qed \end{proof}

\section{Hamiltonian sets in graphs of small heights}
In Theorem \ref{theo4.1},  we shall prove that every graph $(G, g) \in \cal {G}$ of small height which is different from $(P, g)$ and from $(Q, g)$ has a hamiltonian set of vertices which is compatible with the orientation of $c(G)$. We will divide the proof of Theorem \ref{theo4.1} into a sequence of lemmas.

Recall that  $\{ {\cal{G}}_0, {\cal{G}}_1, {\cal{G}}_2, \ldots \}$ is a partition of $\cal{G}$ (see  Corollary \ref{coro2.3}).

\begin{lemma}\label{lem4.1}
Every graph $(G, g) \in {\mathcal{G}}_0$ of height $1$ has a hamiltonian set of vertices which is $(\pm)$compatible with the orientation of $c(G)$.
\end{lemma}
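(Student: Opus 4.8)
The plan is to reduce to an explicit short list of graphs by means of Corollary~\ref{coro2.1}(1): every $(G,g)\in\mathcal{G}_0$ of height $1$ is op-equivalent either to $(J,g)$, to $(G_n,g)$ for some $n\geqslant 3$, or to $(r(G_{2n}),g)$ for some $n\geqslant 3$. The property ``$U$ is a hamiltonian set that is $(\pm)$compatible with the orientation of $c(G)$'' is preserved by op-equivalence, and since the mirror reflection $r$ reverses the cyclic order of the outer cycle it merely interchanges ``compatible'' with ``($-$)compatible'', hence leaves $(\pm)$compatibility invariant; so it suffices to exhibit such a set in $(J,g)$ and in each $(G_n,g)$, $n\geqslant 3$.

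Since $h(G_n)=h(J)=1$, in all of these graphs $V(G)=\{g\}\cup V(c(G))$, the vertex $g$ is adjacent to every vertex $v_i$ of $c(G)=v_1\ldots v_m$, and the proper region bounded by $c(G)$ is triangulated by chords of that cycle only; the restriction $d_G(v_i)\leqslant 5$ forces each $v_i$ to lie on at most two chords. For $(J,g)$ this is a single graph — the hexagon $v_1\ldots v_6$ with the inner triangle $v_1v_3v_5$ and the external vertex $g$ — and a suitable set is written down directly: $U=\{v_1,v_2,v_4,v_5\}$, whose complement $\{v_3,v_6\}$ is independent and meets every inner triangle, whose only member of degree $\leqslant 4$ is $v_6$ (with $v_4,v_2\in U$), and for which $J[U]$ is the path $v_2v_1v_5v_4$, hence a tree. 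For $(G_n,g)$ the inner triangulation of $G_n-g$ is the ``snake'', with exactly two ears, and the candidate is $U=V(c(G_n))\setminus T$, where $T$ is an independent set of the outer cycle chosen so that: $G_n[U]$ is connected; $T$ contains no two cyclically consecutive vertices and no whole inner triangle; and every $v_i\in T$ with $d_{G_n}(v_i)\leqslant 4$ has both $v_{i-2}$ and $v_{i+2}$ outside $T$.

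Granted such a $T$, the three conditions follow at once: $U$ dominates all faces, since each $g$-face $gv_iv_{i+1}$ and each inner triangle meets $U$; $G_n[U]$ is a tree, because it is connected and an edge count gives $e(G_n[U])=|E(G_n-g)|-\sum_{v\in T}d_{G_n-g}(v)=(2m-3)-(m-2+|T|)=|U|-1$ as soon as $T$ is picked with $\sum_{v\in T}(d_{G_n-g}(v)-1)=m-2$; and $U$ is $(\pm)$compatible, because the only vertices that condition constrains are the $v_i\in T$ with $d_{G_n}(v_i)\leqslant 4$, for which $v_{i-2},v_{i+2}\in U$ by construction. The main obstacle is producing $T$ uniformly in $n$: one must fix the snake triangulation of $G_n-g$ and then choose $T$ so that both the edge-sum identity $\sum_{v\in T}(d_{G_n-g}(v)-1)=m-2$ and the spacing conditions wrap consistently around the $m$-cycle. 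This appears to need a case distinction according to $m$ modulo $3$ — combining vertices of degree $5$ (which contribute $3$ to the sum and impose no compatibility constraint) with a bounded number of vertices of degree $3$ or $4$ — together with an ad hoc treatment of the two ears and a direct check for the smallest graphs $G_3$, $G_4$, $G_5$. Unlike the situation at larger heights, no graph of height $1$ is exceptional, so this construction has to succeed for every $n\geqslant 3$.
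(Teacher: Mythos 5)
Your reduction is exactly the paper's: invoke Corollary~\ref{coro2.1}(1) to restrict attention to $(J,g)$ and $(G_n,g)$, $n\geqslant 3$, and observe that a mirror reflection swaps ``compatible'' with ``($-$)compatible'' and therefore preserves $(\pm)$compatibility, which disposes of the $(r(G_{2n}),g)$ case. Your treatment of $(J,g)$ is complete and correct: for the hexagon with inner triangle $v_1v_3v_5$ the set $U=\{v_1,v_2,v_4,v_5\}$ induces the path $v_2v_1v_5v_4$, dominates every face, and the only excluded vertex of degree at most $4$ is $v_6$, whose second neighbours $v_2,v_4$ on the cycle lie in $U$. The general framework you set up for $(G_n,g)$ --- take $U=V(c(G_n))\setminus T$ with $T$ independent in $G_n-g$, no two members cyclically consecutive, the edge-count identity $\sum_{v\in T}(d_{G_n-g}(v)-1)=m-2$, and the spacing condition $v_{i\pm 2}\notin T$ at every $v_i\in T$ of degree at most $4$ --- is also the right one, and correctly reduces ``tree'' to ``connected'' plus an edge count.

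The gap is that for the infinite family $(G_n,g)$ you never actually produce $T$. You list the properties it must have and then write that the construction ``appears to need a case distinction according to $m$ modulo $3$ \ldots together with an ad hoc treatment of the two ears and a direct check for the smallest graphs,'' which is a description of work still to be done, not a proof. Since exhibiting these sets for every $n\geqslant 3$ is the entire content of the lemma beyond the single graph $J$ (the paper supplies them as the circled vertices in its Fig.~4, i.e.\ as an explicit periodic pattern on the zigzag-triangulated polygon), the argument is incomplete as it stands: you must either display the pattern for each residue of $m$ and verify the four conditions (independence, the sum identity, connectivity of the complement, and the spacing condition, including at the two ears where the wrap-around matters), or give a uniform inductive construction. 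Note also a small internal inconsistency to fix when you do this: your edge count $e(G_n[U])=(2m-3)-\sum_{v\in T}d_{G_n-g}(v)$ presupposes that $T$ is independent in all of $G_n-g$ (no chord joins two members of $T$), whereas elsewhere you only forbid cyclically consecutive pairs and whole triangles; if $T$ may contain a chord, the term $e(G_n[T])$ must be added back and the target sum adjusted.
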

\begin{proof}
Lemma \ref{lem4.1} follows from  Corollary \ref{coro2.1}(1), because $(J, g)$ and every graph $(G_n, g)$, for $n \geqslant 3$, has a hamiltonian set of vertices, marked
by circles in Fig.~4, which is $(\pm)$compatible with the orientation of $c(J)$ and $c(G_n)$, respectively.
\qed \end{proof}

\begin{lemma}\label{lemma4.2}
Every graph $(G, g) \in  {\cal{G}}_0$, $(G, g) \neq (P, g)$, which is generated by operations of type $C$ and $B$ from one of the graphs: $(A(G_3), g)$, $(A(G_4), g)$, $(A(G_5), g)$, or $(A(J), g)$, has a hamiltonian set of vertices which is compatible with the orientation of $c(G)$.
\end{lemma}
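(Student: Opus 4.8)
The plan is to reduce the statement to the replacement machinery of Section~3 together with the mirror-reflection trick of Lemma~\ref{lem3.9}. Fix $H\in\{G_3,G_4,G_5,J\}$ and consider $(A(H),g)$, which has height~$2$. By Definition~\ref{def2.4} every vertex of the outer cycle $c(A(H))$ is adjacent in $A(H)$ exactly to $g$, to its two neighbours on $c(A(H))$, and to two vertices of $c^2(A(H))=c(H)$; hence it has degree~$5$. Consequently the compatibility conditions along $c(A(H))$ are vacuous, so \emph{any} hamiltonian set in $(A(H),g)$ is automatically $(\pm)$compatible with the orientation of $c(A(H))$. Each of the four graphs $(A(G_3),g)$, $(A(G_4),g)$, $(A(G_5),g)$, $(A(J),g)$ has a hamiltonian set $X$ (this is easily checked; one such set is produced, for instance, from the hamiltonian set of $(H,g)$ furnished by Lemma~\ref{lem4.1}), and by the previous remark $X$ is $(\pm)$compatible with the orientation of $c(A(H))$.

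Next I would run the replacement operation $\omega$ of Definition~\ref{def3.2} on $(A(H),g)$ and $X$: form the sets $M_1,\dots,M_4$, the family of functions $\Omega(A(H),X)$, and the operation $\omega$ for $\bar{\omega}\in\Omega(A(H),X)$, exactly as there. By Lemma~\ref{lem3.2}, for every $\bar{\omega}\in\Omega(A(H),X)$ the graph $(\omega(A(H)),g)$ belongs to $\mathcal{G}$ and $\omega(X)$ is a hamiltonian set in it which is $(-)$compatible with the orientation of $c(\omega(A(H)))$. The heart of the argument is then to prove that every graph of $\mathcal{G}$ generated by operations of type $C$ and $B$ from $(A(H),g)$ is op-equivalent to $(\omega(A(H)),g)$ for some $\bar{\omega}\in\Omega(A(H),X)$, with the single exception of $(P,g)$ in the case $H=G_3$. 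This is carried out in the same manner as the derivation of conditions (9)--(11) in the proof of Theorem~\ref{theo3.1}: the degrees along $c(A(H))$ and $c^2(A(H))$ together with the structure of $X$ put every edge of $c(A(H))$ into one of $M_1,\dots,M_4$, and the admissible values of $\bar{\omega}$ realise precisely the effect of all admissible finite sequences of $C$- and $B$-operations acting there (these sequences are automatically bounded, since the degree of the relevant vertex of $c^2$ grows and cannot exceed~$6$), through the $\Upsilon$-pattern substitutions of Definition~\ref{def3.1}; the graph $(P,g)$, which by Example~\ref{exam2.1} is generated by a $C$-operation from $(A(G_3),g)$, is the unique graph left out, because it possesses no $(-)$compatible hamiltonian set at all.

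Finally I would pass from $(-)$compatibility to compatibility. Each of $G_3,G_4,G_5,J$ is invariant under the mirror reflection fixing $g$ (Corollary~\ref{coro2.1} and the definitions of these graphs), and since $r(A(H))=A(r(H))$ up to op-equivalence, $(A(H),g)=(r(A(H)),g)$. A graph of height~$2$ generated by operations of type $A$, $B$, $C$ from $(A(H),g)$ uses only $B$ and $C$ (applying $A$ raises the height), and $(Q,g)\notin\mathcal{G}_0$ (Corollary~\ref{coro2.3} and Example~\ref{exam2.1}), so $(Q,g)$ never occurs among these graphs. Hence, applying Lemma~\ref{lem3.9} with $(F,g)=(A(H),g)$ and $m=2$, its hypothesis is exactly what the two preceding paragraphs provide, and we conclude that every graph $(G,g)\in\mathcal{G}_0$ with $(G,g)\neq(P,g)$ generated by operations of type $C$ and $B$ from one of $(A(G_3),g)$, $(A(G_4),g)$, $(A(G_5),g)$, $(A(J),g)$ has a hamiltonian set compatible with the orientation of $c(G)$.

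The step I expect to be the main obstacle is the verification in the second paragraph: that an arbitrary admissible sequence of $C$- and $B$-operations on $(A(H),g)$ is captured by a single choice of $\bar{\omega}$, and that $(P,g)$ is the only graph escaping this description. This rests on a careful, though ultimately routine, comparison of the patterns $\Upsilon^{j}_{k}$ with the local effect of $C$ and $B$ on the cycles $c$ and $c^2$.
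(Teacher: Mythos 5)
Your overall strategy (build a $(\pm)$compatible hamiltonian set in $(A(H),g)$, push it through the replacement operation $\omega$ of Definition~\ref{def3.2} via Lemma~\ref{lem3.2}, then upgrade $(-)$compatibility to compatibility with Lemma~\ref{lem3.9} using the mirror symmetry of $A(G_3)$, $A(G_4)$, $A(G_5)$, $A(J)$) is the same as the paper's. But the step you yourself flag as the main obstacle is where the argument genuinely breaks: it is \emph{not} true that every graph generated by operations of type $C$ and $B$ from $(A(H),g)$ is op-equivalent to $(\omega(A(H)),g)$ for some $\bar{\omega}\in\Omega(A(H),X)$, with $(P,g)$ as the sole exception. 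The operation $\omega$ only rewrites configurations at vertices $t_i\in c^2$ lying in the very restrictive classes $M_1,\dots,M_4$, which are defined relative to the \emph{fixed} hamiltonian set $X$ (they require $t_i\notin X$ and a precise local pattern of membership in $X$ and of degrees along $c$). A $C$- or $B$-operation, by contrast, is applicable wherever the relevant inner vertex has degree at most $5$, irrespective of $X$; sequences of such operations acting at places where $t_i\in X$, or where the local pattern does not match any $M_k$, produce graphs that are not of the form $(\omega(A(H)),g)$ for any $\bar{\omega}$.

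This is exactly why the paper's proof introduces four auxiliary graphs $(A_2,g)$, $(A_3,g)$, $(A_4,g)$, $(A_6,g)$ (Fig.~15), each equipped with its own hamiltonian set $X_j$ and its own family $\Omega(A_j,X_j)$: the graphs generated from $(A_1,g)=(A(G_3),g)$ are op-equivalent to $(\omega(A_j),g)$ for $j=1,2,3$, or to $(A_4,g)$, or to $(P,g)$; those generated from $(A_5,g)$, $(A_7,g)$, $(A_8,g)$ are op-equivalent to $(\omega(A_j),g)$ for $j=5,6,7,8$. In particular $(A_4,g)$ is a second ``bad'' graph your enumeration misses entirely: the paper notes in Section~3 that it admits \emph{no} $(\pm)$compatible hamiltonian set, so it cannot be reached by your single-pass $\omega$ argument and must be handled separately with a set that is only $(-)$compatible. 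Your parenthetical justification for excluding $(P,g)$ (``it possesses no $(-)$compatible hamiltonian set at all'') is also unsupported; the paper only records that its hamiltonian set is not compatible, and excludes it from the statement rather than proving such a nonexistence. To repair your proof you would need to identify all graphs reachable by $C$/$B$ sequences that escape $\Omega(A(H),X)$, choose new hamiltonian sets on them, and iterate --- which is precisely the case analysis the paper carries out.
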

\begin{proof}
Let $(A_j, g) \in  \cal{G}$ be a graph defined by the graph $A_j - g \in  \cal{F}$ in Fig.~15, where $1 \leqslant  j \leqslant 8$. Certainly,
\begin{align*}
(A_1, g) &= (A(G_3), g), (A_5, g) = (A(G_4), g),
\cr
(A_7, g) &= (A(G_5), g), (A_8, g) = (A(J), g).
\end{align*}
Every graph $(A_j, g)$, where $1 \leqslant  j \leqslant 8$, has a hamiltonian set of vertices, say $X_j$, marked by circles in Fig.~15. This set is ($\pm$)compatible for $j \neq 4$ (or, ($-$)compatible for $j = 4$) with the orientation of the cycle $c(A_j)$.

%\vfill

%*********************************************************

%Fig. 15

%Let us observe that every vertex of the cycle $c^2(A^j)$, for $1 \leqslant j \leqslant 9$, has degree at least $4$.

 As in Definition \ref{def3.2} we define the set of functions $\Omega(A_j, X_j)$ and the plane triangulation $(\omega(A_j), g)$, for $\bar{\omega} \in \Omega(A_j, X_j)$ and 
$j \neq 4$.

Let $(G, g)$  be a graph which is generated by operations of type $C$ and $B$ from  $(A_j, g)$, for some  $j = 1, 5, 7, 8$. It is easy to check that if $(G, g)$ is generated  from $(A_1, g)$, then, by the  definition of $\omega$, it is op-equivalent to one of the following graphs:
\begin{align*}
&(\omega(A_j), g), \hbox{ for }  j = 1, 2, 3\hbox{ and } \bar{\omega} \in \Omega(A_j, X_j),
\cr
&(A_4, g), \hbox{ or } (P, g).
\end{align*}
By analogy, we check that if $(G, g)$ is generated from  $(A_j, g)$, for $j = 5, 7, 8$, then it is op-equivalent to one of the following graphs:
\begin{align*}
(\omega(A_j), g), \hbox{ for } j = 5, 6, 7, 8,  \hbox{ and } \bar{\omega} \in \Omega(A_j, X_j).
\end{align*}
Hence, by Lemma \ref{lem3.2},  $(G, g)  \neq (P, g)$ has a hamiltonian set of vertices which is ($-$)compatible with the orientation of $c(G)$. Notice that
$$
(r(A(G_j)), g) = (A(G_j), g), \hbox{ for } j = 3, 4, 5, \hbox{ and } (r(A(J)), g)= (A(J), g).
$$
Thus, by Lemma~\ref{lem3.9},  the graph $(G, g)$, different than  $(P, g)$, has a hamiltonian set which is compatible with the orientation of $c(G)$.
\qed \end{proof}

\begin{definition}\label{def4.1}
 By a \textsl{snake path} of the graph $(G_n, g) \in  {\cal{G}}_0$, for $n\geqslant 5$, we will mean an induced path in $G_n - g$, say $\mathbb{W}_n$,  such that ${c(G_n) - \mathbb{W}_n}$ is the union of pairwise disjoint edges (see Fig.~16 and Fig.~17).

%*****************************************************************

The graph $G_n - g$ has exactly two \textsl{distinguish edges}, say $u_1 v_1$, $u_2 v_2$, such that  vertices $u_1$, $u_2$ (or $v_1$, $v_2$) have degree $2$ (or $3$, respectively).  Assume that $u_1v_1$ is the first edge on the left side, and $u_2 v_2$ is the first edge on the right side of $G_n - g$ (see Fig.~16 and Fig.~17). An integer $\alpha_i (\mathbb{W}_n)$, $i = 1, 2$, is assigned to every snake path $\mathbb{W}_n$ of $(G_n, g)$ by:
$$
\alpha_i (\mathbb{W}_n) = \left\{
\begin{array}{lll}
1 \ &\hbox{for} \ & u_i,v_i \notin \mathbb{W}_n,
\cr
2 \  &\hbox{for}\ & u_i \notin \mathbb{W}_n, v_i \in \mathbb{W}_n,
\cr
3 \  &\hbox{for}\ & u_i \in \mathbb{W}_n, v_i \notin \mathbb{W}_n,
\cr
4 \ &\hbox{for} \ & u_i,v_i \in \mathbb{W}_n.
\end{array}
\right.
$$
\end{definition}

\begin{lemma}\label{lem4.3}
 Every graph $(G_n, g) \in {\cal{G}}_0$, $n\geqslant 5$, has snake paths $\mathbb{W}_n$ and $\mathbb{Z}_n$  such that $\alpha_i(\mathbb{W}_n)$ and $\alpha_i(\mathbb{Z}_n): \{n \in \mathbb{N}: n \geqslant 5 \} \to \{1, 2, 3, 4 \}$, for $i = 1$, $2$, are  periodic functions (with period $5$) satisfying the following conditions:
$$
\begin{array}{lllll}
\alpha_1(\mathbb{W}_n) = j, &\hbox{and} &\alpha_2(\mathbb{W}_n) = 1,
 &\hbox{for}
& n \equiv j \pmod5,\ j = 1, 2, 3, 4,
\cr
\alpha_1(\mathbb{W}_n) = 4, &\hbox{and} &\alpha_2(\mathbb{W}_n) = 2,
 &\hbox{for}
& n \equiv 0 \pmod5,
 \cr
 \alpha_1 (\mathbb{Z}_n) = j+2, &\hbox{and} & \alpha_2(\mathbb{Z}_n) = 4,
&\hbox{for} & n \equiv j \pmod5,\  j = -1, 0, 1, 2,
\cr
 \alpha_1 (\mathbb{Z}_n) = 1, &\hbox{and} & \alpha_2(\mathbb{Z}_n) = 3,
&\hbox{for} & n \equiv 3 \pmod5.
\end{array}
$$
\end{lemma}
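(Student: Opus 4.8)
The plan is to produce $\mathbb{W}_n$ and $\mathbb{Z}_n$ by an explicit construction, using the fact that by Definition~\ref{def2.5} the graph $G_n-g$ is the concrete triangulated polygon drawn in Fig.~4 (a ``thin'', zig-zag triangulation of an $n$-gon whose boundary cycle is $c(G_n)$, every interior vertex carrying at most two chords). First I would fix notation for $c(G_n)=x_1\ldots x_m$ read from the left distinguished edge $u_1v_1$ to the right distinguished edge $u_2v_2$, and record once and for all the precise chord pattern of $G_n-g$. Against this notation the two defining requirements of a snake path become local: ``$\mathbb{W}_n$ is an induced path'' means the chosen vertices carry no unused chord and meet consecutive triangles along a path, while ``$c(G_n)-\mathbb{W}_n$ is a disjoint union of edges'' means that every maximal stretch of omitted boundary vertices has length exactly $2$ and is spanned by a boundary edge.

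Next I would extract a period-$5$ template. Walking along the boundary, the snake path must alternate between following a boundary edge and jumping across a single chord, in a way compatible with the chord structure and leaving behind exactly the length-$2$ gaps described above; a short check shows the unique such repeating pattern has length $5$ along $c(G_n)$. I would write this template out once — for instance as a word in ``in''/``out'' over $x_1,x_2,x_3,\ldots$ — and verify, on this single period, that the chosen vertices induce a path and the omitted ones split into boundary edges. Tiling $G_n-g$ with $\lfloor n/5\rfloor$ copies of the template then furnishes the ``interior'' of the snake, and $\mathbb{W}_n$ and $\mathbb{Z}_n$ will differ only in how this periodic core is aligned against, and capped off near, the two ends.

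The remaining work — and the main obstacle — is the boundary bookkeeping near $u_1v_1$ and $u_2v_2$, where the vertices have the anomalous degrees $2$ and $3$. For each residue $j\equiv n\pmod 5$ I would attach a short cap on each end that (a) splices onto the periodic core so that the whole remains an induced path, (b) keeps the complement in $c(G_n)$ a perfect matching of boundary edges, and (c) realizes the prescribed pair $(\alpha_1,\alpha_2)$ from the two tables. The delicacy is that $j$ simultaneously governs how much slack is left after the periodic tiling \emph{and} which of $u_1,v_1,u_2,v_2$ is forced in or out, so for each residue the left cap and the right cap (and the choices for $\mathbb{W}_n$ versus $\mathbb{Z}_n$) must be coordinated; this produces a finite case analysis over the five residues, i.e.\ ten paths in total, which I would display in a table indexed by $j$ and referencing Fig.~16 and Fig.~17.

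Finally, in each of the ten lines I would simply check properties (a)--(c) and read off $\alpha_1(\mathbb{W}_n),\alpha_2(\mathbb{W}_n)$ and $\alpha_1(\mathbb{Z}_n),\alpha_2(\mathbb{Z}_n)$, confirming agreement with the values listed in the statement. Periodicity of the functions $\alpha_i$ is then automatic: replacing $n$ by $n+5$ inserts one further copy of the length-$5$ template into the core while leaving both caps — hence both $\alpha_1$ and $\alpha_2$ — unchanged.
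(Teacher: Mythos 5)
Your proposal is correct and follows essentially the same route as the paper: the paper's proof simply exhibits $\mathbb{W}_n$ and $\mathbb{Z}_n$ explicitly in Fig.~16 and Fig.~17 for $n=k+5j$ with $k=5,\dots,9$ and $j=0,1$, and then invokes the resulting recurrence construction modulo $5$, which is precisely your ``period-$5$ template plus end caps, checked over the five residues'' scheme. The only difference is presentational — the paper carries the finite case analysis in figures rather than in a table of in/out words.
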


\begin{proof}
Let $u_1 v_1$,  $u_2 v_2$ be two distinguished edges of   $G_n -g$, for $n \geqslant 5$. %Recall that $u_1v_1$ (or $u_2 v_2$) is the first edge on the left side (right side, respectively) of $G_n - g$. 
A snake path  $\mathbb{W}_n$ (or $\mathbb{Z}_n$) of  $(G_n, g)$, where $n = k + 5j$, $k = 5, \ldots, 9$ and $j = 0, 1$, is indicated by the solid path of  $G_n - g$ in Fig.~16 (or Fig.~17, respectively). Hence, we  have a recurrence construction modulo $5$ of the snake path  $\mathbb{W}_n$ (or $\mathbb{Z}_n$) of $(G_n, g)$, for $n \geqslant 5$, satisfying the condition of Lemma \ref{lem4.3}.
\qed \end{proof}

%***************************************************************

\begin{definition}\label{def4.2}
For $1 \leqslant i \leqslant 2$, $1 \leqslant k \leqslant 4$ and some $0 \leqslant j \leqslant 5$, we define a pattern
$[\Psi^{(i, j)}_k, d, C^{(i, j)}_k]$, where $\Psi^{(i, j)}_k$ is a plane semi-triangulation in Fig.~18 (for $i = 2$), or Fig.~19 (for $i = 1$), $d$ is a fixed vertex in $\sigma(\Psi^{(i, j)}_k)$, and $C^{(i, j)}_k$ is a fixed set of vertices marked by circles in the graph $\Psi^{(i, j)}_k$.

%***************************************************************
\end{definition}
 The following lemma follows directly from Definition \ref{def4.2}.
\begin{lemma}\label{lem4.4}
$\phantom{xxx}$
%\begin{description}
\begin{enumerate}[\upshape(1)]
%\item[(1)] 
\item
$[\Psi^{(i, j)}_k, d, C^{(i, j)}_k]$ is $\sigma$-compatible with $[\Psi^{(2, 0)}_k, d, C^{(2, 0)}_k]$.
%\item[(2)]
\item
Every  set $C^{(i,j)}_k$ dominates all faces of the graph $\Psi^{(i, j)}_k$ and $d \notin C^{(i, j)}_k$,
%\item[(3)]
\item
 Every induced graph $\Psi^{(2, j)}_k [C^{(2, j)}_k]$ (or $\Psi^{(1, j)}_k [C^{(1, j)}_k]$) is a tree (consists of two paths, respectively).
%\end{description}
\end{enumerate}
\end{lemma}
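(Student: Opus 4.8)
The plan is to verify all three assertions by direct inspection of the finitely many patterns $[\Psi^{(i,j)}_k, d, C^{(i,j)}_k]$ displayed in Fig.~18 and Fig.~19, exactly as the phrase ``follows directly from Definition~\ref{def4.2}'' suggests. There is no induction or clever argument here: the substance of Definition~\ref{def4.2} is that these semi-triangulations are drawn with a common boundary structure, so the three claims reduce to checking, for each of the finitely many admissible triples $(i,j,k)$ with $1 \leqslant i \leqslant 2$, $1 \leqslant k \leqslant 4$, $0 \leqslant j \leqslant 5$, a short list of combinatorial facts readable off the picture.

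For part~(1), I would observe that, by the way the $\Psi^{(i,j)}_k$ are constructed in Definition~\ref{def4.2}, the bounding cycle $\sigma(\Psi^{(i,j)}_k)$ is the same cycle for all admissible $i$, $j$ with $k$ fixed, carrying the same orientation of its proper region, with the same distinguished vertex $d$, the same chords of $\sigma$, and the same trace $C^{(i,j)}_k \cap V(\sigma(\Psi^{(i,j)}_k))$ of the marked set on the boundary. Taking $\varphi$ to be the identity on $\sigma$ then witnesses $\sigma$-compatibility with the base pattern $[\Psi^{(2,0)}_k, d, C^{(2,0)}_k]$ in the sense of Definition~\ref{def2.1}.

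For part~(2), I would run through each figure and check two things: that $d$ carries no circle, so $d \notin C^{(i,j)}_k$; and that every proper (triangular) face of $\Psi^{(i,j)}_k$ has at least one circled vertex on its boundary, i.e.\ $C^{(i,j)}_k$ dominates all faces. For part~(3), for each pattern with $i = 2$ I would verify that $\Psi^{(2,j)}_k[C^{(2,j)}_k]$ is connected and satisfies $e = |V| - 1$, hence is a tree by the criterion recalled just before Definition~\ref{def3.1}; and for each pattern with $i = 1$ I would verify that $\Psi^{(1,j)}_k[C^{(1,j)}_k]$ has exactly two connected components, each of which is a path (again a degree/edge-count check on the picture).

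The only ``obstacle'' is bookkeeping: one must make sure that no admissible triple $(i,j,k)$ is overlooked and that the circled vertices in Fig.~18 and Fig.~19 are read correctly, since the later treatment of snake paths in the graphs $(G_n,g)$ will be built from exactly these small configurations and relies on them behaving as stated. No step is mathematically deep.
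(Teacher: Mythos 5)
Your proposal is correct and matches the paper exactly: the paper offers no written proof, stating only that the lemma ``follows directly from Definition~\ref{def4.2}'', i.e.\ from a finite inspection of the patterns in Fig.~18 and Fig.~19, which is precisely the case-by-case verification you describe. Nothing further is needed.
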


 \begin{lemma}\label{lem4.5}
 Every graph $(G, g) \in  {\cal{G}}_0$, $(G, g) \neq (H_7, g)$, which is generated by operations of type $C$ and $B$ from one of graphs $(A(G_n), g)$, $n \geqslant 6$,  has a hamiltonian set which is $(\pm$)compatible with the orientation of $c(G)$.

 The graph $(H_7, g)$,  has a hamiltonian set of vertices, marked by circles in Fig.~20, which is compatible with the orientation of $c(H_7)$.
  \end{lemma}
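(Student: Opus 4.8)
The plan is to imitate the proof of Lemma~\ref{lemma4.2}, replacing the finite list of base graphs used there by the infinite family $(A(G_n),g)$, $n\geqslant 6$, and using the periodicity modulo $5$ of the snake paths of Lemma~\ref{lem4.3} to reduce to a bounded case analysis. I would first record the structure of $(A(G_n),g)$: it has height $2$, its second cycle is $c^2(A(G_n))=c(G_n)$, and, by the definition of the operation $A$ (Definition~\ref{def2.4}(a)), \emph{every vertex of the outer cycle $c(A(G_n))$ has degree exactly $5$} (namely $x_i$ is adjacent to $x_{i-1}$, $x_{i+1}$, $y_{i-1}$, $y_i$ and $g$). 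Consequently any hamiltonian set in $(A(G_n),g)$ --- and, more generally, in any graph reached along the way whose outer cycle still has all vertices of degree $5$ --- is automatically $(\pm)$compatible with the orientation of its outer cycle, because the hypothesis ``$x_i\notin U$ and $d(x_i)\leqslant 4$'' is never satisfied.

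Next I would construct a hamiltonian set $X_n$ in $(A(G_n),g)$ from a snake path. Let $\mathbb{W}_n$ (or $\mathbb{Z}_n$) be the snake path of $(G_n,g)$ furnished by Lemma~\ref{lem4.3}. By Definition~\ref{def4.1} it is an induced path in $G_n-g=V^2(A(G_n),g)$ whose complement in $c(G_n)$ is a union of pairwise disjoint edges; adjoining to its vertex set a suitable collection of outer vertices $x_i$ --- enough to dominate every face incident with $g$ and every face $x_iy_{i-1}y_i$, the disjointness of the uncovered edges being exactly what keeps the resulting set acyclic --- one obtains a set $X_n$ which dominates all faces of $A(G_n)$, induces a tree, and avoids $g$; that is, a hamiltonian set in $(A(G_n),g)$. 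Near the two distinguished edges $u_1v_1$, $u_2v_2$ of $G_n-g$ the structure of $X_n$ is prescribed by the patterns $[\Psi^{(i,j)}_k,d,C^{(i,j)}_k]$ of Definition~\ref{def4.2}, which dominate correctly and have the connectivity of Lemma~\ref{lem4.4}; the parameter $j$ at each end is read off from the integers $\alpha_1,\alpha_2$ of Lemma~\ref{lem4.3}, and whether $\mathbb{W}_n$ or $\mathbb{Z}_n$ is used depends only on $n\bmod 5$. By the previous paragraph, $X_n$ is $(\pm)$compatible with the orientation of $c(A(G_n))$.

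Then, imitating Lemma~\ref{lemma4.2}, I would analyse which graphs occur. I would show that every $(G,g)\in\mathcal{G}_0$ generated by operations of type $C$ and $B$ from some $(A(G_n),g)$, $n\geqslant 6$, is op-equivalent either to $(H_7,g)$ or to $(\omega(H),g)$ for some $\bar\omega\in\Omega^*(H,X)$, where $(H,X)$ ranges over a finite list of base pairs: the pairs $(A(G_n),g)$ with $X=X_n$, one for each residue class of $n$ modulo $5$, together with finitely many auxiliary graphs --- those of Fig.~20 other than $(H_7,g)$ --- obtained from them by bounded sequences of operations, each carrying a marked hamiltonian set that is $(\pm)$compatible with the orientation of its outer cycle. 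Here the periodicity modulo $5$ of the snake-path parameters collapses the infinitely many values of $n$ to this finite list, just as the values $n=3,4,5$ were handled individually in Lemma~\ref{lemma4.2}. For each base one checks that the degree conditions defining $\Omega^*(H,X)$ hold --- this is where one uses that the pertinent vertices of $c^2(H)$ have degree $5$ --- so that the ``moreover'' part of Lemma~\ref{lem3.2} applies and $(\omega(H),g)$ has a hamiltonian set which is $(\pm)$compatible with the orientation of $c(\omega(H))$; this proves the first assertion for every $(G,g)\neq(H_7,g)$. For $(H_7,g)$ itself one simply exhibits the vertex set marked by circles in Fig.~20 and verifies directly that it induces a tree, dominates all faces and is compatible with the orientation of $c(H_7)$; it is not $(\pm)$compatible, by the remark following Lemma~\ref{lem3.2}, which is exactly why it is singled out in the statement.

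The step I expect to be the main obstacle is the bounded enumeration in the third paragraph: one must list the graphs produced by operations of type $C$ and $B$ acting near the two distinguished edges of $G_n-g$, assign to each the correct $\omega$-description relative to one of the finitely many bases, and confirm that $(H_7,g)$ is the unique graph for which the replacement datum forced by the construction lies in $\Omega(H,X)\setminus\Omega^*(H,X)$ --- equivalently, the unique graph on which $(\pm)$compatibility genuinely fails. The periodicity modulo $5$ of the parameters $\alpha_i(\mathbb{W}_n)$, $\alpha_i(\mathbb{Z}_n)$ of Lemma~\ref{lem4.3} is what makes this enumeration finite, and the choice between $\mathbb{W}_n$ and $\mathbb{Z}_n$ according to $n\bmod 5$ and to which operation is to be performed at each distinguished edge is the delicate bookkeeping that makes the matching work uniformly.
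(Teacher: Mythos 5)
Your top-level strategy coincides with the paper's (use the mod-$5$ periodicity of Lemma~\ref{lem4.3} to reduce to a bounded list of base pairs $(H,X)$, then invoke Lemma~\ref{lem3.2} with $\Omega^*$, with $(H_7,g)$ as the unique exception), but the construction of the base hamiltonian sets in your second paragraph is both different from the paper's and unsubstantiated, and it is too thin to carry the enumeration in your third paragraph. The paper does not take the snake path together with some outer vertices: it takes $W_n=\{v\colon d(v,\mathbb{W}_n)=1\}$, the distance-one neighbourhood of the snake path, which dominates all faces automatically but induces a \emph{cycle}, not a tree; the tree is then obtained by performing replacement operations $w$ (resp.\ $z$) at the two configurations $\Phi_1,\Phi_2$ surrounding the distinguished edges, substituting patterns $\Psi^{(1,j)}_k$ whose marked sets consist of two paths (Lemma~\ref{lem4.4}(3)) so as to cut the cycle exactly once. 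Your set ``snake path plus suitable outer vertices'' would have to dominate every face $gx_ix_{i+1}$ using only the added outer vertices while remaining an induced tree; you give no argument that such a choice exists, and it is not the device the paper uses.

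The more serious gap is quantitative. For each $n$ the paper needs \emph{both} snake paths $\mathbb{W}_n$ and $\mathbb{Z}_n$ (your claim that the choice between them ``depends only on $n\bmod 5$'' is wrong), and for each of them a whole family of cut patterns $\bar w\in\Phi_{W}(n)$, $\bar z\in\Phi_{Z}(n)$, producing several distinct base pairs $(w(A(G_n)),w(W_n))$ and $(z(A(G_n)),z(Z_n))$ per value of $n$; only the union of the $\omega$-orbits of all of these, together with the separately exhibited exceptional graphs $(H_n,g)$ for $n\equiv 1,2,3\pmod 5$, exhausts the $\binom{8}{2}+8=36$ (for $n$ even) or $2\binom{8}{2}+8=64$ (for $n$ odd) graphs of ${\cal S}_n$. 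The reason a single $X_n$ cannot suffice is that $\omega$ relative to a fixed $X$ can only modify the graph at vertices of $c^2(H)$ lying in the prescribed local configurations $M_1,\dots,M_4$ determined by $X$; a $C$-operation applied at a vertex of $c^2(A(G_n))$ that happens to lie in $X_n$ yields a graph unreachable from the base $(A(G_n),X_n)$, so one must switch to a base whose hamiltonian set avoids that vertex. Your ``finitely many auxiliary graphs --- those of Fig.~20 other than $(H_7,g)$'' misidentifies this: the graphs $(H_n,g)$ are the terminal exceptions \emph{not} covered by any $\omega$-orbit, whereas the missing bases are the $w$- and $z$-modified graphs. Without this multiplicity of bases the enumeration does not close.
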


  \begin{proof}
Fix $n \geqslant 6$. Let $u_1 v_1$, $u_2 v_2$ be two distinguished edges of the graph $G_n - g$. Recall that $u_1v_1$ (or $u_2 v_2$) is the first edge on the left side (right side, respectively) of $G_n - g$. Suppose that $\mathbb{W}_n$, $\mathbb{Z}_n$ are two snake paths of $(G_n, g)$ such that $\alpha_i(\mathbb{W}_n)$ and $\alpha_i(\mathbb{Z}_n)$,  for $i = 1, 2$, satisfy conditions of Lemma \ref{lem4.3}.

Since $G_n -g$ is a configuration of $(A(G_n), g)$ and $c^2(A(G_n)) = c(G_n)$, we can assume that $u_1 v_1$, $u_2 v_2$ are edges of the cycle $c^2(A(G_n))$ such that $u_i$ and $v_i$ has, respectively, degree $4$  and $5$ in $A(G_n)$. Similarly, we can assume that $\mathbb{W}_n$, $\mathbb{Z}_n$ are induced paths of $A(G_n)$.

Let us denote
\begin{align*}
W_n &= \{v \in V(A(G_n))\colon d(v, \mathbb{W}_n) = 1\},
\cr
Z_n &= \{ v \in V(A(G_n))\colon d(v, \mathbb{Z}_n) = 1\}.
\end{align*}
It is easy to see that:
 \begin{enumerate}
\item [(1)]
both $W_n$  and $Z_n$  dominate all faces of $A(G_n)$,
\item [(2)]
both $A(G_n)[W_n]$ and $A(G_n)[Z_n]$ are cycles in $A(G_n)$.
\item [(3)]
both $A(G_n)[W_n]\cap c(A(G_n))$ and $A(G_n)[Z_n]\cap c(A(G_n))$ do not contain an isolated vertex.
\end{enumerate}

Suppose that $t_i$, for $i = 1, 2$, is a vertex of the cycle $c(A(G_n))$ which is adjacent to the vertices $u_{i}$ and $v_i$ of the cycle $c^2(A(G_n))$. Assume that $\Phi_i$ is a minimal configuration of $A(G_n)$ such that $u_i$, $v_i$, $t_i$ are the only vertices in the proper region of $\Phi_i$.

By the definition of $\alpha_i(\mathbb{W}_n)$, $\alpha_i(\mathbb{Z}_n)$, for $i = 1, 2$, and by the definition of patterns $[\Psi^{(2, 0)}_k, d, C^{(2, 0)}_k]$, for $k = 1, \ldots, 4$, it follows that
\begin{enumerate}
\item[(4)]
 $\left\{
 \begin{array}{ll}
[\Phi_1, g, W_n \cap V(\Phi_1)] = [\Psi^{(2, 0)}_{\alpha_1(\mathbb{W}_n)}, d, C^{(2, 0)}_{\alpha_1(\mathbb{W}_n)}],
\\[4pt]
[\Phi_2, g, W_n \cap V(\Phi_2)] = [\Psi^{(2, 0)}_{\alpha_2(\mathbb{W}_n)}, d, C^{(2, 0)}_{\alpha_2(\mathbb{W}_n)}], \hbox{ for } n  \hbox{ even},
\\[4pt]
[\Phi_2, g, W_n \cap V(\Phi_2)] = [r(\Psi^{(2, 0)}_{\alpha_2(\mathbb{W}_n)}), d, r(C^{(2, 0)}_{\alpha_2(\mathbb{W}_n)})], \hbox{ for } n  \hbox{ odd}, \end{array}
 \right.$
\end{enumerate}
and
\begin{enumerate}
\item[(5)]
 $\left\{
 \begin{array}{ll}
[\Phi_1, g, Z_n \cap V(\Phi_1)] = [\Psi^{(2, 0)}_{\alpha_1(\mathbb{Z}_n)}, d, C^{(2, 0)}_{\alpha_1(\mathbb{Z}_n)}],
\\[4pt]
[\Phi_2, g, Z_n \cap V(\Phi_2)] = [\Psi^{(2, 0)}_{\alpha_2(\mathbb{Z}_n)}, d, C^{(2, 0)}_{\alpha_2(\mathbb{Z}_n)}], \hbox{ for } n  \hbox{ even},
\\[4pt]
[\Phi_2, g, Z_n \cap V(\Phi_2)] = [r(\Psi^{(2, 0)}_{\alpha_2(\mathbb{Z}_n)}), d, r(C^{(2, 0)}_{\alpha_2(\mathbb{Z}_n)})], \hbox{ for } n \hbox{ odd}, \end{array}
 \right.$
\end{enumerate}
where $r(\Psi^{(2, 0)}_{\alpha_2(\mathbb{W}_n)})$ (or $r(\Psi^{(2, 0)}_{\alpha_2(\mathbb{Z}_n)})$) is a mirror reflection of $\Psi^{(2, 0)}_{\alpha_2(\mathbb{W}_n)}$ (or $\Psi^{(2, 0)}_{\alpha_2(\mathbb{Z}_n)}$, respectively).

Since $n \geqslant 6$, we obtain:
\begin{enumerate}
\item[(6)]
proper regions of $\Phi_1$ and $\Phi_2$ are disjoint.
\end{enumerate}

Let $\Phi_{W}(n)$ be the set of all functions $\bar{w} :  \{1,2\} \to \{ 0,\ldots, 5\}$ satisfying the following conditions:
\begin{align*}
&\bar{w}(1) \in \{ 0, 5\} \hbox{ and } \bar{w}(2)\in \{0, 1\},&\hbox{for } n\equiv 1 \pmod5,
\cr
&\bar{w}(1) = \bar{w}(2) = 0, &\hbox{for } n \equiv 2 \pmod5,
\cr
&\bar{w}(1) \in \{ 1, 2, 4\} \hbox{ and } \bar{w}(2) \in \{0, 1, 4\},&\hbox{for } n\equiv 3\pmod5,
\cr
&\bar{w}(1) \in \{1, 4\} \hbox{ and } \bar{w}(2) \in \{1, 4\},&\hbox{for } n\equiv 4\pmod5,
\cr
&\bar{w}(1) = \bar{w}(2)= 4, &\hbox{for } n\equiv 0 \pmod5.
\end{align*}
Similarly, let $\Phi_{Z}(n)$ be the set of all functions $\bar{z}:  \{1,2\} \to \{ 0,\ldots, 5\}$ satisfying the following conditions:
\begin{align*}
&\bar{z}(1) \in \{1, 2, 4\} \hbox{ and }  \bar{z}(2) = 0, &\hbox{for } &n\equiv 1, \hbox{ or } n\equiv 2\pmod5,
\cr
&\bar{z}(1) \in \{0, 3, 5\} \hbox{ and }  \bar{z}(2) \in \{0, 1, 5\}, &\hbox{for } &n\equiv 3 \pmod5,
\cr
&\bar{z}(1)= \bar{z}(2) = 0, &\hbox{for } &n\equiv 4, \hbox{ or } n\equiv 0\pmod5.
\end{align*}

Fix $\bar{w} \in \Phi_{W}(n)$ and  $\bar{z} \in \Phi_{Z}(n)$. By (4)--(6) and Lemma \ref{lem4.4}(1), we can define replacement operations
\begin{align*}
w:  &[\Phi_1, g, W_n \cap V(\Phi_1)] \xrightarrow
\
[\Psi^{(1, \bar{w}(1))}_{\alpha_1(\mathbb{W}_n)}, d, C^{(1, \bar{w}(1))}_{\alpha_1(\mathbb{W}_n)}],
\cr
&[\Phi_2, g, W_n \cap V(\Phi_2)] \xrightarrow
\
[\Psi^{(2, \bar{w}(2))}_{\alpha_2(\mathbb{W}_n)}, d, C^{(2, \bar{w}(2))}_{\alpha_2(\mathbb{W}_n)}], \hbox{ for } n  \hbox{ even},
\cr
&[\Phi_2, g, W_n \cap V(\Phi_2)] \xrightarrow
\
[r(\Psi^{(2, \bar{w}(2))}_{\alpha_2(\mathbb{W}_n)}), d, r(C^{(2, \bar{w}(2))}_{\alpha_2(\mathbb{W}_n)})], \hbox{ for } n \hbox{ odd},
\end{align*}
and
\begin{align*}
z:  &[\Phi_1, g, Z_n \cap V(\Phi_1)] \xrightarrow
\
[\Psi^{(1, \bar{z}(1))}_{\alpha_1(\mathbb{Z}_n)}, d, C^{(1, \bar{z}(1))}_{\alpha_1(\mathbb{Z}_n)}],
\cr
&[\Phi_2, g, Z_n \cap V(\Phi_2)] \xrightarrow
\
[\Psi^{(2, \bar{z}(2))}_{\alpha_2(\mathbb{Z}_n)}, d, C^{(2, \bar{z}(2))}_{\alpha_2(\mathbb{Z}_n)}], \hbox{ for } n  \hbox{ even},
\cr
&[\Phi_2, g, Z_n \cap V(\Phi_2)] \xrightarrow
\
[r(\Psi^{(2, \bar{z}(2))}_{\alpha_2(\mathbb{Z}_n)}), d, r(C^{(2, \bar{z}(2))}_{\alpha_2(\mathbb{Z}_n)})], \hbox{ for } n  \hbox{ odd},
\end{align*}
to obtain plane triangulations $(w(A(G_n)),g)$ and $(z(A(G_n)),g) \in \mathcal{G}$,  and the following sets $w(W_n)$,  $z(Z_n) \subset V(w(A(G_n)))$
\begin{align*}
w(W_n) &= (W_n \setminus \bigcup_{i = 1, 2} V(\Phi_i)) \cup  \bigcup_{i = 1, 2}C^{(i, \bar{w}(i))}_{\alpha_i(\mathbb{W}_n)}, \hbox{ for } n  \hbox{ even},
\cr
w(W_n) &= (W_n \setminus \bigcup_{i = 1, 2} V(\Phi_i)) \cup  C^{(1, \bar{w}(1))}_{\alpha_1(\mathbb{W}_n)} \cup  r(C^{(2, \bar{w}(2))}_{\alpha_2(\mathbb{W}_n)}), \hbox{ for } n  \hbox{ odd},
 \end{align*}
 and
 \begin{align*}
 z(Z_n) &= (Z_n \setminus \bigcup_{i = 1, 2} V(\Phi_i)) \cup  \bigcup_{i = 1, 2}C^{(i, \bar{z}(i))}_{\alpha_i(\mathbb{Z}_n)},\hbox{ for } n  \hbox{ even},
 \cr
z(Z_n) &= (Z_n \setminus \bigcup_{i = 1, 2} V(\Phi_i)) \cup  C^{(1, \bar{z}(1))}_{\alpha_1(\mathbb{Z}_n)} \cup  r(C^{(2, \bar{z}(2))}_{\alpha_2(\mathbb{Z}_n)}), \hbox{ for } n  \hbox{ odd}.
\end{align*}

Recall that $\mathbb{W}_n$ and $\mathbb{Z}_n$ have recurrence constructions modulo $5$, and $\alpha_1(\mathbb{W}_n)$, $\alpha_2(\mathbb{W}_n)$,  $\alpha_1(\mathbb{Z}_n)$, $\alpha_2(\mathbb{Z}_n)$ are periodic functions with period $5$. Hence, it follows that
 \begin{enumerate}
\item [(7)]
$(w(A(G_n)),g)$ and $(z(A(G_n)), g)$ (similarly, the sets $w(W_n)$ and $z(Z_n)$) have recurrence constructions modulo $5$.
\end{enumerate}

%Using the definitions of $w$ and $z$, we obtain
%\begin{enumerate}
%\item [(8)]
%$(w(A(G_n)),g)$ and $(z(A(G_n)), g)$ are generated by operations of type $C$ and $B$ from $(A(G_n), g)$.
%\end{enumerate}
By (3), (4) and Lemma~\ref{lem4.4}(1), $w(W_n)$  is $(\pm)$compatible with the orientation of $c(w(A(G_n))$.  By (1), (4) and Lemma~\ref{lem4.4}(2), $w(W_n)$ dominates all faces of $w(A(G_n))$. By (2), (4) and Lemma \ref{lem4.4}(3), $w(W_n)$ induces a tree in $w(A(G_n))$. %Hence, $w(W_n)$  is a hamiltonian set of vertices in $w(A(G_n))$ which is $(\pm)$compatible with the orientation of $c(w(A(G_n))$.
Similarly, $z(Z_n)$ is $(\pm)$compatible with the orientation of $c(z(A(G_n))$, dominates all faces, and induces a tree in $z(A(G_n))$. Hence, we obtain
\begin{enumerate}
\item [(8)]
$w(W_n)$ (or $z(Z_n)$) is a hamiltonian set which is $(\pm)$compatible with the orientation of $c(w(A(G_n)))$ (or $c(z(A(G_n)))$, respectively).
\end{enumerate}

Let  $(H_n, g) \in {\cal{G}}_0$, for $n = k + 5j$, where $k = 6, 7, 8$ and $j = 0, 1, 2$, be a graph defined by the graph $H_n - g \in \cal{F}$ in Fig.~20.  Notice that $(H_n, g)$, for $n \neq 7$, has a hamiltonian set of vertices, marked by circles in Fig.~20, which is $(\pm)$compatible with the orientation of $c(H_n)$. Hence, we obtain a recurrence construction of $(H_n, g) \in {\cal{G}}_0$, for $n \geqslant 6$, $n\equiv1 \pmod5$ ($n\equiv2,3 \pmod5$, respectively), as well as a recurrence construction of a hamiltonian set in $(H_n, g)$ which is $(\pm)$compatible (for $n \neq 7$), or compatible (for $n = 7$) with the orientation of $c(H_n)$. 

%Remark that $(H_7, g)$ has a hamiltonian set of vertices, marked by circles in Fig.20, which is compatible with the orientation of $c(H_7)$.

%***********************************************************

%******************************************************

Put $(H, g) = (w(A(G_n)), g)$ and $X = w(W_n)$ (or $(H, g) = (z(A(G_n)),g)$ and $X = z(Z_n)$). As in Definition \ref{def3.2} we define the set $\Omega^*(H, X)$,  the replacement operation $\omega$, and the plane triangulation $(\omega(H), g)$   for $\bar{\omega} \in \Omega^*(H, X)$.

Let  ${\cal{S}}_n \subset {\cal{G}}_0$ be the family of all graphs which are generated by operations  of type $C$ and $B$ from $(A(G_n), g)$. By the definition of the replacement operation $w$ and $z$, $(w(A(G_n)),g)$ and $(z(A(G_n)), g) \in {\cal{S}}_n$.

Observe that, for an even $n$, there exists exactly $\binom{8}{2} + 8 = 36$ different  graphs in ${\cal{S}}_n$. Similarly, for an odd $n$, there exists exactly $2\binom{8}{2} +8 = 64$ different graphs in ${\cal{S}}_n$. Hence, by the definition of $\omega$, it is not difficult to check that every graph $(G, g) \in {\cal{S}}_n$, where $6 \leqslant n \leqslant 10$, is op-equivalent to one of the following graphs (or to a mirror reflection of one of them, for $n$ odd):
\begin{align*}
&(\omega(w(A(G_n))), g), \hbox{ for } \bar{w} \in \Phi_{W}(n) \hbox{ and }\bar{\omega} \in \Omega^*(w(A(G_n)), w(W_n)),
 \cr
&(\omega(z(A(G_n))), g),  \hbox{ for } \bar{z} \in \Phi_{Z}(n) \hbox{ and } \bar{\omega} \in \Omega^*(z(A(G_n)), z(Z_n)),
\cr
\hbox{or}
\cr
&(H_n, g), \hbox{ for } n \equiv 1, 2,  3 \pmod5.
\end{align*}
From (7), it follows that every graph $(G, g) \in {\cal{S}}_n$, for $n > 10$, is also op-equivalent to one of the above graphs (or to a mirror reflection of one of them, for $n$ odd).
Finally, by (8) and Lemma \ref{lem3.2}, every graph $(G, g) \in {\cal{S}}_n$,  different than $ (H_7, g)$, has a hamiltonian set  which is ($\pm$)com\-pat\-i\-ble with the orientation of $c(G)$.
 \qed \end{proof}
%%%%%%%%%%%%%%%%%%%%%%%%%%%%%%%%%%%%%%%%%%%%%%%%%%%%%%%%%%%%%%%%
\begin{lemma}\label{lem4.6}
Every graph $(G, g) \in  {\cal{G}}_n$, for $n \geqslant 1$, of height $2$ has a hamiltonian set of vertices which is $(\pm)$compatible with the orientation of $c(G)$.
\end{lemma}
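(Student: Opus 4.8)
The plan is to reduce $(G,g)$ to the base graphs $(F_n,g)$ and then transport an explicit $(\pm)$compatible hamiltonian set through the operations of type $C$ and $B$, using the replacement operation $\omega$ of Definition~\ref{def3.2}, in the same way as in the proofs of Lemma~\ref{lemma4.2} and Lemma~\ref{lem4.5}. First I would note that condition (1) of Theorem~\ref{theo2.1} cannot hold for a height-$2$ graph $(G,g)\in{\cal{G}}_n$ with $n\geqslant 1$: by Corollary~\ref{coro2.2}(2) the set $V^2(G,g)$ induces a path in $G$, so $G^{-1}-g$ is a path and $(G^{-1},g)\notin\cal{G}$; condition (3) fails because $h(G)=2>1$. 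Hence, by Theorem~\ref{theo2.1}, $(G,g)$ is generated by operations of type $C$ and $B$ from $(F_n,g)$, and it suffices to produce, for each $n\geqslant 1$, a hamiltonian set $X_n$ in $(F_n,g)$ which is $(\pm)$compatible with $c(F_n)$, and then to check that every graph generated from $(F_n,g)$ by operations of type $C$, $B$ is op-equivalent to $(\omega(F_n),g)$ for some $\bar{\omega}\in\Omega^*(F_n,X_n)$.

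For the set $X_n$ I would treat the small indices (the $(F_n,g)$ drawn explicitly in Fig.~5) by inspection, marking a hamiltonian set and verifying $(\pm)$compatibility directly, as in Lemma~\ref{lem4.1}. For the remaining $n$ I would use the description of $F_n$ implicit in the proof of Lemma~\ref{lem2.1}: every vertex of $c(F_n)$ has degree $5$, and $c^2(F_n)=V^2(F_n)$ is an induced path all of whose vertices have degree $3$ or $6$. Along this path one builds $X_n$ by a recursion, in the spirit of (but simpler than) the snake paths of Lemma~\ref{lem4.3}; for these $n$ the set $X_n$ is automatically $(\pm)$compatible, since $c(F_n)$ has no vertex of degree $\leqslant 4$ and the defining implication of $(\pm)$compatibility is then vacuous.

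The core of the proof is the reduction of arbitrary sequences of $C$- and $B$-operations to a single $\omega$. Fixing $X_n$ and forming $M_1,\ldots,M_4$, the set $\Omega^*(F_n,X_n)$ and the operation $\omega$ as in Definition~\ref{def3.2}, the decisive observation is that every vertex of $c^2(F_n)$ has degree $3$ or $6$ (checked directly for the small cases): hence no index of $M$ has $d_{F_n}(t_i)\geqslant 4$, so $M_1=M_3=M_4=\emptyset$ and $\Omega^*(F_n,X_n)=\Omega(F_n,X_n)$. This is exactly what removes the exceptional behaviour that produces the graphs without $(\pm)$compatible sets in the ${\cal{G}}_0$-case (such as $(P,g)$ and $(H_7,g)$): here every admissible $\bar{\omega}$ already lies in $\Omega^*$. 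One then verifies, exactly as in Lemma~\ref{lem4.5}, that the patterns $\Upsilon^j_2$, $0\leqslant j\leqslant 6$, realise every sequence of $C$- and $B$-modifications that can be performed around a single vertex of $c^2(F_n)$, and that the corresponding regions are pairwise disjoint; consequently every graph generated from $(F_n,g)$ by operations of type $C$, $B$ is op-equivalent to $(\omega(F_n),g)$ with $\bar{\omega}\in\Omega^*(F_n,X_n)$. By Lemma~\ref{lem3.2}, $\omega(X_n)$ is then a hamiltonian set in $(\omega(F_n),g)$ which is $(\pm)$compatible with the orientation of $c(\omega(F_n))$, and the statement follows.

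The main obstacle is twofold. First, the set $X_n$ must be produced uniformly for all $n$, which means setting up the recursion along $c^2(F_n)$ with its (small) period and checking it really dominates all faces and induces a tree; this is the technical analogue of Lemma~\ref{lem4.3}. Second, one must do the bookkeeping showing that the finitely many combinatorial possibilities for $C$- and $B$-modifications around one vertex of $c^2(F_n)$ are precisely the patterns $\Upsilon^j_2$, so that no graph escapes the normal form $(\omega(F_n),g)$ with $\bar{\omega}\in\Omega^*$; this is where the rigidity of $F_n$ (degrees $5$ on $c(F_n)$, degrees $3$ or $6$ on $c^2(F_n)$) is used in an essential way and is what guarantees that, unlike the ${\cal{G}}_0$-lemmas, the present statement admits no exceptional graphs.
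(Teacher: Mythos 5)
Your reduction via Theorem~\ref{theo2.1}(2) is correct, and the overall strategy (transporting a $(\pm)$compatible hamiltonian set through the $C$- and $B$-operations by means of the replacement operation $\omega$) is indeed the paper's. The gap is the central claim that every graph generated from $(F_n,g)$ by operations of type $C$ and $B$ is op-equivalent to $(\omega(F_n),g)$ for one fixed hamiltonian set $X_n$ and some $\bar{\omega}\in\Omega^*(F_n,X_n)$. The operation $\omega$ only alters the graph inside the proper regions of the configurations $\Omega_i$ attached to indices $i\in M_1\cup\cdots\cup M_4$, and a vertex $t_i$ of $c^2$ enters $M_2$ only if $t_i\notin X$ \emph{and} both vertices of the unique face $z_jz_{j+1}t_i$ lie in $X$. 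Take $F_3$: by the structure forced in Lemma~\ref{lem2.1}, $c(F_3)$ is a $4$-cycle $z_1z_2z_3z_4$ of degree-$5$ vertices and $V^2(F_3,g)$ is a path $y_1y_2y_3$ with $y_1$ adjacent to $z_1,z_2$ and $y_3$ to $z_3,z_4$, both of degree $3$. Having both $y_1,y_3\in M_2$ would force $z_1,z_2,z_3,z_4\in X_3$, so $X_3$ would contain a $4$-cycle and could not induce a tree; hence for every admissible $X_3$ the operation $\omega$ leaves at least one of the quadrilaterals $gz_1y_1z_2$, $gz_3y_3z_4$ untouched, and the corresponding $y_i$ keeps degree $3$. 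But a type-$C$ operation is legal at each of the faces $z_1z_2y_1$ and $z_3z_4y_3$, and the descendant obtained by performing both has $y_1$ and $y_3$ of degree $4$ and no degree-$3$ vertex at all, so it is not op-equivalent to any $(\omega(F_3),g)$. The same obstruction recurs for every $n$ (and for $n=1$ with the three faces around the single interior vertex of $F_1$), so no choice of $X_n$ rescues the single-$\omega$ normal form.

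This is precisely why the paper's proof does not work with $\omega$ based at $(F_n,Y_n)$ alone: it introduces the auxiliary descendants $(B^1_1,g),(B^2_1,g),(B^3_1,g)$ and $(B_n,g)$, $n\geqslant 2$, of Fig.~21, each equipped with a freshly chosen $(\pm)$compatible hamiltonian set positioned so that the still-modifiable vertices of $c^2$ fall into $M_2\cup M_3$ (so $M_3$ may be nonempty, contrary to your assertion that only $M_2$ occurs; note also that ``every vertex of $c^2(F_n)$ has degree $3$ or $6$'' already fails for $F_2$, whose two interior vertices have degree $4$). It then verifies that every height-$2$ graph of ${\cal G}_n$ is op-equivalent to $(F_n,g)$, to one of the sporadic graphs $(B^2_1,g),(B^3_1,g)$, or to an $\omega$-image of $(F_1,g)$, $(B^1_1,g)$ or $(B_n,g)$, and concludes with Lemma~\ref{lem3.2}. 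To repair your argument you would have to identify these intermediate normal forms and exhibit their hamiltonian sets explicitly; the shortcut of a single $\omega$ applied to $(F_n,g)$ cannot be made to work.
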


\begin{proof}
We know that every graph $(F_n, g)$, $n \geqslant 1$, has a hamiltonian set of vertices, say  $Y_n$, marked by circles in Fig.~5, which is $(\pm)$compatible with the orientation of $c(F_n)$

We define a graph $(B^j_1, g)\in  {\cal{G}}_1$,  where $j = 1, 2, 3$, as the graph $B^j_1 - g \in  \cal{F}$ shown in Fig.~21. $(B^j_1, g)$ has a hamiltonian set of vertices, say $X^j_1$, marked by circles in Fig.~21, which is $(\pm)$compatible with the orientation of $c(B^j_1)$.

We also define a graph $(B_n, g) \in  {\cal{G}}_n$, for $n \geqslant 2$, as the graph $B_n - g \in  \cal{F}$ shown in Fig.~21. $(B_n, g)$ has a hamiltonian set of vertices , say $X_n$, marked by circles in Fig.~21, which is $(\pm)$compatible with the orientation of $c(B_n)$. %Notice that $(B_2, g) = (F_2, g)$.

%***********************
Let us consider the graph $(B_n, g)$, for $n \geqslant 2$, ($(F_1, g)$, or $(B_1^1, g)$) with the hamiltonian set $X_n$ ($Y_1$, or $X_1^1$, respectively). As in Definition~\ref{def3.2} we define sets $M_1, \ldots , M_4$.  Notice that $M_1 = M_4 = \emptyset$ and $M_2 \cap M_3 = \emptyset$. As in Definition~\ref{def3.2} we define  $\Omega^*(B_n, X_n)$ ($\Omega^*(F_1, Y_1)$, or $\Omega^*(B_1^1, X_1^1)$), the replacement operation $\omega$, and the plane triangulation $(\omega(B_n), g)$ ($(\omega(F_1), g)$, or $(\omega(B_1^1), g)$, respectively).

Let $(G, g) \in  {\cal{G}}_n$, for $n \geqslant 1$, be a graph of height $2$. By Theorem \ref{theo2.1} it is generated by operations of type $C$ and $B$ from $(F_n, g)$. Hence, by the definition of $\omega$, it is easy to check that if $n = 1$, then the graph $(G, g)$ is op-equivalent to one of the following graphs:
\begin{align*}
&(\omega(F_1), g),  \hbox{ for } \bar{\omega} \in \Omega^*(F_1, Y_1),
\cr
&(\omega(B_1^1), g), \hbox{ for } \bar{\omega} \in \Omega^*(B_1^1, X_1^1),
\cr
&(B^2_1, g),  \hbox{ or } (B^3_1, g).
\end{align*}
By analogy, we check that if $n  \geqslant 2$, then the graph $(G, g)$ is op-equivalent to one of the following graphs:
\begin{align*}
&(F_n, g), 
\cr
\hbox{or}
\cr
&(\omega(B_n), g), \hbox{ for }  \bar{\omega} \in \Omega^*(B_n, X_n).
\end{align*}
Thus, by Lemma \ref{lem3.2}, $(G, g)$ has  a hamiltonian set of vertices which is $(\pm)$com\-pat\-i\-ble with the orientation of the cycle $c(G)$.
\qed \end{proof}

 \begin{definition}\label{def4.3}
For $k = 5$, $6$ and $j = 0$, $1$, we define a pattern
$[\Delta^j_k, d, D^j_k]$, where $\Delta^j_k$ is a plane semi-triangulation in Fig.~22, $d$ is a fixed vertex in $\sigma(\Delta^j_k)$, and $D^j_k$ is a fixed set of vertices marked by circles in the graph $\Delta^j_k$.

% A pattern $[\Delta^j_k, d, D^j_k]$, where $k = 5, 6$ and $j = 0, 1$ *******************

\end{definition}

The following lemma follows directly from Definition \ref{def4.3}.

%%%%%%%%%%%%%%%%%%%%%%%%%%%%%%%%%%%%%%%%%%%%%%%%%%%%%%%%%%%%%%%%%%%%%%%%%%%
\begin{lemma}\label{lem4.7}
For $k = 5, 6$ and $j = 0, 1$ the following conditions are satisfied:
%\begin{description}
\begin{enumerate}[\upshape(1)]
%\item[(1)]
\item
$[\Delta^1_k, d, D^1_k]$ is $\sigma$-compatible with $[\Delta^0_k, d, D^0_k]$,
%\item[(2)]
\item
$D^j_k$ dominates all faces of $\Delta^j_k$, and $d \notin D^j_k$.
%\item[(3)]
%$(\Delta^1_k, d) = (\bar B(\Delta^0_k), d)$.
%\end{description}
\end{enumerate}
\end{lemma}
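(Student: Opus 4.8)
The plan is to verify both assertions by direct inspection of the plane semi-triangulations $\Delta^0_k$ and $\Delta^1_k$, $k = 5, 6$, drawn in Fig.~22, in exactly the same spirit as the analogous statements were established in Lemma~\ref{lem3.3} and Lemma~\ref{lem4.4}. Since the excerpt already records that the lemma ``follows directly from Definition~\ref{def4.3}'', the task is essentially to read the required data off the four pictures and to check them against the definitions of $\sigma$-compatibility and of domination.

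For part~(1), I would exhibit, for each $k \in \{5, 6\}$, the isomorphism $\varphi \colon \sigma(\Delta^0_k) \to \sigma(\Delta^1_k)$ of boundary cycles, chosen so that it preserves the orientation that Section~2 fixes on $\sigma$ (counterclockwise if the proper region is bounded, clockwise otherwise) and so that $\varphi(d) = d$. One then checks the two remaining conditions of Definition~\ref{def2.1}: that $\varphi$ carries $D^0_k \cap V(\sigma(\Delta^0_k))$ onto $D^1_k \cap V(\sigma(\Delta^1_k))$, and that every chord $xy$ of $\sigma(\Delta^0_k)$ is sent to a chord $\varphi(x)\varphi(y)$ of $\sigma(\Delta^1_k)$. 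Because in Fig.~22 the step from $\Delta^0_k$ to $\Delta^1_k$ alters only the interior of the proper region, leaving the boundary cycle together with its circled vertices and its chords unchanged, this reduces to reading off the boundary data from the figure.

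For part~(2), for each of the four pairs $(k, j)$ I would list the proper faces of $\Delta^j_k$ and attach to each of them a circled vertex of $D^j_k$ lying on it, which shows that $D^j_k$ dominates every proper face; the distinguished face is dominated as well, since $\sigma(\Delta^j_k)$ meets $D^j_k$. Finally one observes from Fig.~22 that $d$ is not circled, i.e.\ $d \notin D^j_k$.

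The only delicate point — and it is a bookkeeping matter rather than a real obstacle — is to apply the orientation convention for $\sigma$ consistently to $\Delta^0_k$ and $\Delta^1_k$, so that the isomorphism $\varphi$ in part~(1) genuinely preserves orientation; once this is done, the whole lemma is immediate from Definition~\ref{def4.3}.
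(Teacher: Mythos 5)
Your proposal is correct and matches the paper's treatment: the paper offers no written argument beyond the remark that the lemma ``follows directly from Definition~\ref{def4.3}'', i.e.\ it is verified exactly by the kind of inspection of Fig.~22 that you describe. Your explicit checklist (boundary isomorphism preserving orientation, circled boundary vertices and chords, face-by-face domination, $d$ not circled) is simply a fuller write-up of the same routine verification.
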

\begin{lemma}\label{lem4.8}
Every graph $(G, g) \in {\mathcal{G}}_n$, for $n \geqslant 1$, of height $3$ and different than $(Q, g)$,  has a hamiltonian set of vertices which is compatible with the orientation of $c(G)$.
\end{lemma}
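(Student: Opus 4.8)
The plan is to combine the structural description of $\mathcal{G}_n$ in Theorem~\ref{theo2.2} with the replacement-operation machinery of Theorem~\ref{theo3.1} and Lemma~\ref{lem3.2}, and then to pass from $(-)$compatibility to compatibility by Lemma~\ref{lem3.9}. First I would apply Theorem~\ref{theo2.2} to $(G,g)\in\mathcal{G}_n$ with $n\geqslant1$ and $h(G)=3$. Its case~(1) cannot occur: it would require $(G^{-2},g)\in\mathcal{G}_n$ with $h(G^{-2})=1$, whereas by Corollary~\ref{coro2.2}(1) every graph of height $1$ lies in $\mathcal{G}_0$, and $\{\mathcal{G}_0,\mathcal{G}_1,\dots\}$ is a partition of $\mathcal{G}$ (Corollary~\ref{coro2.3}). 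Hence case~(2) holds and $(G,g)$ is generated by operations of type $\bar C,\bar B,C,B$ from $(A(F_n),g)$. I would then distinguish the values of $n$. For $n\geqslant3$ all vertices of $c(F_n)$ have degree $5$ (this is the case shown in Fig.~5, and it is what makes $F_n$ appear in Lemma~\ref{lem2.1}); consequently, in $(A(F_n),g)$ the cycle $c(A(F_n))$ consists of vertices of degree $5$ and $c^2(A(F_n))=c(F_n)$ of vertices of degree $6$, so none of the operations $\bar C,\bar B,C,B$ can be applied and $(G,g)=(A(F_n),g)$. For these $n$ it therefore suffices to exhibit one hamiltonian set of $(A(F_n),g)$ that is $(\pm)$compatible — hence compatible — with the orientation of $c(A(F_n))$; I would build it by a recursion in $n$ (as in the proof of Lemma~\ref{lem4.5}), starting from the hamiltonian set $Y_n$ of $(F_n,g)$ given by Lemma~\ref{lem4.6} and adjoining suitable vertices of the new outer cycle, with the local gadgets supplied by the patterns of Definitions~\ref{def3.1}, \ref{def3.3} and~\ref{def4.3}.

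For $n=1$ and $n=2$ the cycle $c(F_n)$ carries vertices of degree $4$, so the operations $\bar C,\bar B,C,B$ genuinely act; here I would mimic the proofs of Theorem~\ref{theo3.1} and Lemmas~\ref{lem4.5}--\ref{lem4.6}. Using the identities $A(B_i(H))=\bar B_i(A(H))$ and $A(C_i(H))=\bar C_i(A(H))$ from the proof of Theorem~\ref{theo2.2}, the operations $\bar B,\bar C$ applied to $(A(F_n),g)$ correspond to $B,C$ applied to $(F_n,g)$, so the graphs reached by $\bar B,\bar C$ alone are exactly the images under $A$ of the members of $\mathcal{G}_n$ of height $2$ treated in Lemma~\ref{lem4.6}. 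I would fix a short list of template graphs — $(A(F_n),g)$ together with these $A$-images, each equipped with an explicit hamiltonian set marked in a figure and checked to be $(\pm)$compatible — and verify that every graph of the family different from $(Q,g)$ is op-equivalent to $(\omega(H),g)$ for one of these templates $H$ and some $\bar\omega\in\Omega(H,X)$, where $\omega$ is the replacement operation of Definition~\ref{def3.2}, the exceptional graph $(Q,g)$ being the single member of the $n=1$ family not covered by this description. By Lemma~\ref{lem3.2} each such $(\omega(H),g)$ then has a hamiltonian set that is $(-)$compatible with the orientation of $c(\omega(H))$, and, since $(r(F_n),g)=(F_n,g)$ for all $n\geqslant1$, Lemma~\ref{lem3.9} upgrades this to a hamiltonian set compatible with the orientation — which, together with the case $n\geqslant3$, is the assertion.

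The step I expect to be the main obstacle is the verification, for $n=1,2$, that the list of templates is complete and that $(Q,g)$ is precisely the one graph left out: this amounts to enumerating all graphs reachable from $(A(F_1),g)$ (respectively $(A(F_2),g)$) under $\bar C,\bar B,C,B$ and matching each of them — except $(Q,g)$ — to an $\omega$-modification of a template for which Lemma~\ref{lem3.2} applies, exactly the kind of bounded but intricate case analysis carried out in the proofs of Theorem~\ref{theo3.1} and Lemma~\ref{lem4.5}. A secondary difficulty is arranging the recursion in $n$ so that the hamiltonian set produced in $(A(F_n),g)$ for $n\geqslant3$ is genuinely $(\pm)$compatible along the whole family, and not merely a dominating induced tree.
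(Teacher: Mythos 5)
Your overall architecture for $n=1,2$ --- rule out case (1) of Theorem~\ref{theo2.2} via Corollaries~\ref{coro2.2} and \ref{coro2.3}, enumerate a finite list of templates reachable from $(A(F_n),g)$, each carrying an explicitly marked $(\pm)$compatible hamiltonian set, close the list under the replacement operation $\omega$ of Definition~\ref{def3.2}, and then invoke Lemma~\ref{lem3.2} for $(-)$compatibility and Lemma~\ref{lem3.9} for the upgrade to compatibility --- is the paper's strategy. But your treatment of $n\geqslant 3$ contains a genuine error. You claim that, since all vertices of $c(F_n)$ have degree $5$, none of the operations $\bar C,\bar B, C, B$ can act on $(A(F_n),g)$, so that $(G,g)=(A(F_n),g)$. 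This is false: the operation $\bar C_i$ of Definition~\ref{def2.3}(d) places its degree hypothesis on a vertex $v_i\in V^3(G,g)$, not on the vertices $y_i,y_{i+1}\in V^2(G,g)$. In $(A(F_n),g)$ one has $V^3(A(F_n),g)=V^2(F_n,g)$, which by Corollary~\ref{coro2.2}(2) induces a path of length $n-1$ whose end-vertices have degree at most $5$ (degree $3$, by condition $(i_5)$ in the proof of Lemma~\ref{lem2.1}); hence $\bar C$ is applicable there, and once it has acted the modified outer boundary acquires vertices of small degree at which $C$ and $B$ become applicable in turn. So for every $n\geqslant 2$ the family generated from $(A(F_n),g)$ is genuinely nontrivial --- this is precisely why the paper introduces the templates $(F^j_n,g)$ for $1\leqslant j\leqslant 11$, the auxiliary patterns $[\Delta^j_5,d,D^j_5]$ and $[\Delta^j_6,d,D^j_6]$ of Definition~\ref{def4.3}, and the intermediate replacement operation $\theta$, before applying $\omega$.

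Consequently your plan for $n\geqslant 3$ --- a recursion in $n$ producing a single $(\pm)$compatible hamiltonian set in $(A(F_n),g)$ --- does not suffice: you must carry out, for all $n\geqslant 2$ and not only for $n=2$, the same bounded but intricate enumeration that you reserved for $n=1,2$, matching every graph of the family to an $\omega$-modification of a template with a marked $(\pm)$compatible hamiltonian set (and verifying that the exceptional graph $(Q,g)$ arises only in the $n=1$ family). The restriction of the nontrivial case analysis to $n\leqslant 2$ is the gap in your argument; the remaining steps, including the final appeal to $(r(F_n),g)=(F_n,g)$ and Lemma~\ref{lem3.9}, agree with the paper.
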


\begin{proof}
Notice that every graph $(A(F_n), g)$, for $n \geqslant 3$, has a hamiltonian set of vertices, marked by circles in Fig.~23 (for $n = 3$), which is $(\pm)$compatible with the orientation of $c(A(F_n))$.

We define a graph $(F^j_1, g) \in  {\cal{G}}_1$, where $1 \leqslant  j \leqslant 6$, as the graph $F^j_1 - g \in  \cal{F}$ shown in Fig.~24. We define also a graph $(F^j_n, g) \in  {\cal{G}}_n$, for $n \geqslant 2$ and $1 \leqslant  j \leqslant 11$, as the graph $F^j_n - g \in  \cal{F}$ shown in Fig.~25--28 (for $n = 2, 3$). Remark that $(F^1_1, g) = (A(F_1), g)$ and $(F^1_2, g) = (A(F_2), g)$.

%********************************************************************

%*****************************************************

%%%%%%%%%%%%%%%%%%%%%%%%%%%%%%%%%%%%%%%%%%%%%%%%%%%%%%%%%%%%%%%%%%%%%%%%%%%%%%%%%%%%%%%%***************************************************************

% Certainly,
%\begin{enumerate}
%\item[(1)]
% $\left\{
% \begin{array}{ll}
%(F^j_1, g), \hbox{ where } 2 \leqslant  j \leqslant 6, \hbox{ is generated by
%operations of type } \bar{C}
%\\
%\hbox{and } C \hbox{ from  } (A(F_1), g),
%\\[4pt]
%(F^j_2, g), \hbox{ where } 2 \leqslant j \leqslant 11, \hbox{ is generated by
%operations of type } \bar{C}
%\\
%\hbox{and } C \hbox{ from } (A(F_2), g),
%\\[4pt]
%(F^j_n, g), \hbox{ where }  n \geqslant 3  \hbox{ and }  1 \leqslant  j \leqslant 11, \hbox{ is generated by operations}
%\\
%\hbox{of type } \bar{C} \hbox{ and } C \hbox{ from } (A(F_n), g).
%\end{array}
% \right.$
% \end{enumerate}
Notice that every graph $(F^j_n, g)$ has a hamiltonian set of vertices, say $W^j_n$, marked by circles in Fig.~$24-28$, which is ($\pm$)compatible with the orientation of $c(F^j_n)$.

Let $\Theta_0 \in \cal{F}$ be a configuration of $F^1_1$, or $F^j_n$, for $n \geqslant 2$ and $1 \leqslant j \leqslant 6$, which has just four proper faces $gx_1t_1$, $x_1t_1y_1$, $t_1y_1t_2$, $y_1t_2t_3$ (see Fig.~$24-26$). Let $\Theta_i \in \cal{F}$ be a configuration of $F^j_1$, where $1\leqslant i \leqslant j \leqslant 3$, which has just three proper faces $gx_iv_i$, $x_iv_iz_i$, $x_iz_iy_i$ (see Fig.~24). Furthermore, let $\Theta_2 \in \cal{F}$ be a configuration of $F^j_n$, for $n \geqslant 2$ and $2 \leqslant j \leqslant 5$, which has just three proper faces $gx_2v_2$, $x_2v_2z_2$, $x_2z_2y_2$ (see Fig.~$25-26$). By Definition \ref{def3.3} of $[\Delta^0_2, d, D^0_2]$, and, by  Definition \ref{def4.3} of $[\Delta^0_5, d, D^0_5]$ and  $[\Delta^0_6, d, D^0_6]$, we obtain
\begin{enumerate}
\item[(1)]
 $\left\{
 \begin{array}{ll}
 [\Theta_0, g, W^j_n \cap V(\Theta_0)] = [\Delta^0_2, d, D^0_2],  &\hbox{for } \ n = 1, j = 1, \hbox{ and }
 \\
 &\hbox{for } \ n \geqslant 2,\ 1 \leqslant j \leqslant 6,
\\[4pt]
 [\Theta_i, g, W^j_1 \cap V(\Theta_i)] = [\Delta^0_5, d, D^0_5], &\hbox{for } \ 1\leqslant i \leqslant j \leqslant 3,
    \\[4pt]
 [\Theta_2, g, W^j_n \cap V(\Theta_2)] = [\Delta^0_5, d, D^0_5], &\hbox{for } \ n \geqslant 2,\ j = 2, 4,
    \\[4pt]
[\Theta_2, g, W^j_n \cap V(\Theta_2)] = [\Delta^0_6, d, D^0_6], &\hbox{for } \ n \geqslant 2,\ j  = 3, 5.
\end{array}
 \right.$
\end{enumerate}
Certainly, by the definition of $\Theta_i$, $i = 0, 1, 2, 3$, we have
\begin{enumerate}
\item[(2)]
 $\left\{
 \begin{array}{l}
\hbox{proper regions of } \Theta_0 \hbox{ and } \Theta_1  \hbox{ are disjoint in the graph } F^1_1,
 \\[4pt]
 \hbox{proper regions of } \Theta_1 \hbox{ and } \Theta_2  \hbox{ are disjoint in the graph } F^2_1,
 \\[4pt]
   \hbox{proper regions of } \Theta_1, \Theta_2 \hbox{ and } \Theta_3 \hbox{ are paiwise disjoint}
\\
 \hbox{in the graph }  F^3_1,
 \\[4pt]
\hbox{proper regions of } \Theta_0 \hbox{ and } \Theta_2 \hbox{ are disjoint in the graph } F^j_n,
\\
\hbox{for } n \geqslant 2 \hbox{ and }  2 \leqslant j \leqslant 5.
  \end{array}
 \right.$
 \end{enumerate}

Let $\Theta$ be the set of all functions $\bar{\theta} \colon \{0,1,2,3 \} \to \{ 0, 1, \ldots , 6 \}$ such that $\bar{\theta}(i) \leqslant 1$, for $i \neq 0$.

Fix $\bar{\theta} \in \Theta$. Let us consider $(F^j_1, g)$, where $1 \leqslant j \leqslant 3$ (or $(F^j_n, g)$,  for $n \geqslant 2$ and $1 \leqslant j \leqslant 6$). By (1), (2) and Lemma~\ref{lem4.7}(1), we can define the following replacement operation $\theta = \theta^j_n$ to obtain a plane triangulation $(\theta(F^j_n), g)$ from $(F^j_n, g)$, and the following set $\theta(W^j_n) \subset V(\theta(F^j_n))$ from $W^j_n$:
\\[4pt]
For $n = 1$ and $j = 1$,
\begin{align*}
\theta:  &[\Theta_0, g, W^1_1 \cap V(\Theta_0)] \xrightarrow
 \
[\Delta^{\bar\theta(0)}_2, d, D^{\bar\theta(0)}_2 ],
\cr
&[\Theta_1, g, W^1_1 \cap V(\Theta_1)] \xrightarrow
 \
[\Delta^{\bar\theta(1)}_5, d, D^{\bar\theta(1)}_5 ],
\end{align*}
$$\theta(W^1_1) = (W^1_1 \setminus (V(\Theta_0)\cup V(\Theta_1)) \cup D^{\bar\theta(0)}_2 \cup D^{\bar\theta(1)}_5 \subset V(\theta(F^1_1)).$$
For $n = 1$ and $j = 2$,
\begin{align*}
\theta:  &[\Theta_i, g, W^2_1 \cap V(\Theta_k)] \xrightarrow {i = 1, 2}
 \
[\Delta^{\bar\theta(i)}_5, d, D^{\bar\theta(i)}_5 ],
\end{align*}
$$\theta(W^2_1) = (W^2_1 \setminus (V(\Theta_1)\cup V(\Theta_2)) \cup D^{\bar\theta(1)}_5 \cup D^{\bar\theta(2)}_5 \subset V(\theta(F^2_1)).$$
For $n = 1$ and $j = 3$,
\begin{align*}
\theta:  &[\Theta_i, g, W^3_1 \cap V(\Theta_i)] \xrightarrow {i = 1, 2, 3}
 \
[\Delta^{\bar\theta(i)}_5, d, D^{\bar\theta(i)}_5 ],
\end{align*}
$$\theta(W^3_1) = (W^3_1 \setminus \bigcup_{i = 1, 2, 3} V(\Theta_i)) \cup \bigcup_{i = 1, 2, 3}D^{\bar\theta(i)}_5 \subset V(\theta(F^3_1)).$$
For $n \geqslant 2$ and $j = 1, 6$,
\begin{align*}
\theta:  &[\Theta_0, g, W^j_n \cap V(\Theta_0)] \xrightarrow
 \
[\Delta^{\bar\theta(0)}_2, d, D^{\bar\theta(0)}_2 ],
\end{align*}
$$\theta(W^j_n) = (W^j_n \setminus V(\Theta_0)) \cup D^{\bar\theta(0)}_2 \subset V(\theta(F^j_n)).$$
For $n \geqslant 2$ and $j = 2, 4$,
\begin{align*}
\theta:  &[\Theta_0, g, W^j_n \cap V(\Theta_0)] \xrightarrow
 \
[\Delta^{\bar\theta(0)}_2, d, D^{\bar\theta(0)}_2 ],
\cr
&[\Theta_2, g, W^j_n \cap V(\Theta_2)] \xrightarrow
 \
[\Delta^{\bar\theta(2)}_5, d, D^{\bar\theta(2)}_5 ],
\end{align*}
$$\theta(W^j_n) = (W^j_n \setminus (V(\Theta_0)\cup V(\Theta_2)) \cup (D^{\bar\theta(0)}_2 \cup D^{\bar\theta(2)}_5) \subset V(\theta(F^j_n)).$$
For $n \geqslant 2$ and $j = 3, 5$,
\begin{align*}
\theta:  &[\Theta_0, g, W^j_n \cap V(\Theta_0)] \xrightarrow
 \
[\Delta^{\bar\theta(0)}_2, d, D^{\bar\theta(0)}_2 ],
\cr
&[\Theta_2, g, W^j_n \cap V(\Theta_2)] \xrightarrow
 \
[\Delta^{\bar\theta(2)}_6, d, D^{\bar\theta(2)}_6 ],
\end{align*}
$$\theta(W^j_n) = (W^j_n \setminus (V(\Theta_0)\cup V(\Theta_2)) \cup (D^{\bar\theta(0)}_2 \cup D^{\bar\theta(2)}_6) \subset V(\theta(F^j_n)).$$
By the definition of $\theta = \theta^j_n$,  $(\theta(F^j_n),g) \in {\cal{G}}_n$. %\begin{enumerate}
%\item[(4)]
% $(\theta(F^j_n),g)$  is generated by operations of type $\bar{C}$, $\bar{B}$ and $C$  from $(A(F_n), g)$.
% \end{enumerate}
Since $W^j_n$  is a hamiltonian set in $(F^j_n, g)$ which is $(\pm)$compatible with the orientation of $c(F^j_n)$, by Lemma~\ref{lem3.3} and Lemma \ref{lem4.7}, we obtain the following:
\begin{enumerate}
\item[(3)]
$\theta(W^j_n)$  is a hamiltonian set in $(\theta(F^j_n), g)$ which is $(\pm)$compatible with the orientation of $c(\theta(F^j_n))$.
\end{enumerate}

% Suppose that $(H, g) = (\theta(F^j_n), g)$. Analogically as in Definition \ref{def3.2} we can define a set of functions $\Omega(H, \theta(W^{j}_n))$,  the replacement operation $\omega$, and the plane triangulation $(\omega(H), g)$, for $\bar{\omega} \in \Omega(H, \theta(W^{j}_n))$.
Put $(H, g) = \Omega(\theta(F^j_n))$ and $X = \theta(W^{j}_n))$ in Definition \ref{def3.2}. As in Definition~\ref{def3.2}, we define the set of functions $\Omega(H, X)$,  the replacement operation $\omega$, and the plane triangulation $(\omega(H, g)$, for $\bar{\omega} \in \Omega(H, X)$.

%Notice that every vertex belonging to the  cycle $c^2(\theta(F^j_n))$ has degree at least~$4$. Hence,
Let $(G, g) \in {\mathcal{G}}_n$, for $n \geqslant 1$, be a graph of height $3$. By Theorem \ref{theo2.2}, it is generated by operations of type $\bar{C}$, $\bar{B}$, $C$, $B$ from $(A(F_n), g)$.
Hence, by the definitions of $\theta = \theta^j_1$ and $\omega$, it is easy to check that if $n = 1$, then the graph $(G, g)$ is op-equivalent to one of the following graphs:
%If $(G, g) \in {\cal{G}}_1$, then, by (4) and the definitions of $\theta = \theta^j_1$ and $\omega$, it is op-equivalent to one of the following graphs:
\begin{align*}
%&(\theta(F^j_1), g),  \hbox{ for } 1 \leqslant j \leqslant 3 \ \hbox{ and } \ %\bar{\theta} \in \Theta,
%\cr
&(\omega (\theta(F^j_1)), g),  \hbox{ for } \bar{\theta} \in \Theta,\ \bar{\omega} \in \Omega(\theta(F^j_1), \theta(W^j_1)), \hbox{ and }1 \leqslant j \leqslant 3,
\cr
&(F^j_1, g), \hbox{ where } 4 \leqslant j \leqslant 6,
\cr
\hbox{or}
\cr
&(Q, g).
\end{align*}
 Similarly, by the definitions of $\theta = \theta^j_n$ and $\omega$, it is easy to check that if $n \geqslant 2$, then the graph $(G, g)$ is op-equivalent to one of the following graphs:
\begin{align*}
%&(\theta(F^j_n), g),  \hbox{ for } 1 \leqslant j \leqslant 6 \ \hbox{ and } 1 \leqslant j \leqslant 6 \ \bar{\theta} \in \Theta
&(A(F_n), g), \hbox{ for } n \geqslant 3, 
\cr
&(\omega (\theta(F^j_n)), g), \hbox{ for } \bar{\theta} \in \Theta,\ \bar{\omega} \in \Omega(\theta(F^j_n), \theta(W^j_n)), \hbox{ and }1 \leqslant j \leqslant 6,
\cr
\hbox{or}
\cr
&(F^j_n, g), \hbox{ where } 7 \leqslant j \leqslant 11.
\end{align*}
Thus, by (3) and Lemma \ref{lem3.2}, the graph $(G, g)$, different than $(Q, g)$, has a hamiltonian set of vertices which is ($-$)compatible with the orientation of $c(G)$. Finally, by Lemma~\ref{lem3.9}, it has a hamiltonian set which is compatible with the orientation of $c(G)$, because $(r(F_n), g) = (F_n, g)$, for $n \geqslant 1$.
%Hence, by Lemma \ref{lem3.9}, it suffices to prove that  if $(G, g)$ is different than $(Q, g)$, then it has a hamiltonian set of vertices which is $(-)$compatible with the orientation of $c(G)$.
\qed \end{proof}

\begin{theorem}\label{theo4.1}
Every graph $(G, g) \in {\cal {G}}_n$ of height $h(G) \leqslant 3$, for $n \geqslant 1$ (or $h(G) \leqslant 2$, for $n = 0$), and different than $(P, g)$ and $(Q, g)$, has a hamiltonian set of vertices which is compatible with the orientation of $c(G)$.
\end{theorem}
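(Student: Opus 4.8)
The plan is to derive Theorem \ref{theo4.1} from Lemmas \ref{lem4.1}, \ref{lemma4.2}, \ref{lem4.5}, \ref{lem4.6} and \ref{lem4.8} by a case split on the pair $(n, h(G))$, using that $\{\mathcal{G}_0, \mathcal{G}_1, \ldots\}$ is a partition of $\mathcal{G}$ (Corollary \ref{coro2.3}) together with the structural descriptions of low-height graphs in Corollaries \ref{coro2.1} and \ref{coro2.2}. Two preliminary remarks shorten the argument. First, a $(\pm)$compatible hamiltonian set is by definition compatible, so any lemma producing the former immediately yields the conclusion. Second, if $(G,g) \in \mathcal{G}_n$ with $n \geqslant 1$, then $h(G) \geqslant 2$: were $h(G) = 1$, the set $V^{h(G)}(G,g) = V(c(G))$ would be the vertex set of a cycle, forcing $(G,g) \in \mathcal{G}_0$ by Corollary \ref{coro2.2}, a contradiction. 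Hence only the pairs $(0,1)$, $(0,2)$, $(n,2)$ and $(n,3)$ with $n \geqslant 1$ need to be considered.

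I would first settle $\mathcal{G}_0$. If $h(G)=1$, Lemma \ref{lem4.1} applies verbatim. If $h(G)=2$, Corollary \ref{coro2.1}(2) shows that $(G,g)$ is generated by operations of type $C$ and $B$ from $(A(J),g)$, from $(A(G_k),g)$ for some $k\geqslant 3$, or from $(r(A(G_{2k})),g)$ for some $k\geqslant 3$. When the starting graph is $(A(J),g)$ or $(A(G_k),g)$ with $3\leqslant k\leqslant 5$, Lemma \ref{lemma4.2} produces a compatible hamiltonian set (here we use $(G,g)\neq(P,g)$). When it is $(A(G_k),g)$ with $k\geqslant 6$, Lemma \ref{lem4.5} produces a $(\pm)$compatible hamiltonian set, except for the single graph $(G,g)=(H_7,g)$, for which the same lemma supplies a compatible one. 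When the starting graph is $(r(A(G_{2k})),g)$, the graph $(r(G),g)$ is generated by operations of type $C$ and $B$ from $(A(G_{2k}),g)$ with $2k\geqslant 6$; since the operations $C$ and $B$ preserve $|V^{h(G)}(G,g)|$, the families built over $(A(G_m),g)$ for distinct $m$ are disjoint, so $(r(G),g)\neq(H_7,g)$, and Lemma \ref{lem4.5} gives a $(\pm)$compatible hamiltonian set $Y$ of $(r(G),g)$. As a mirror reflection interchanges ``compatible'' and ``$(-)$compatible'', $r(Y)$ is then a $(\pm)$compatible hamiltonian set of $(G,g)$; this is precisely the reflection device isolated in Lemma \ref{lem3.9}.

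It then remains to treat $\mathcal{G}_n$ for $n\geqslant 1$, where only $h(G)\in\{2,3\}$ occurs. For $h(G)=2$, Lemma \ref{lem4.6} yields a $(\pm)$compatible hamiltonian set directly, and for $h(G)=3$, Lemma \ref{lem4.8} yields a compatible one, using $(G,g)\neq(Q,g)$. Since these cases exhaust all possibilities, the theorem follows. At the level of Theorem \ref{theo4.1} there is no substantial difficulty --- the real work is inside Lemmas \ref{lemma4.2}, \ref{lem4.5}, \ref{lem4.6} and \ref{lem4.8}, with their recurrence constructions of starting graphs and of hamiltonian sets --- so the only delicate points are the bookkeeping of the two exceptional graphs (checking that $(P,g)$ appears only among the graphs generated from $(A(G_3),g)$ and $(Q,g)$ only among the height-$3$ graphs over $(F_n,g)$, so that the exclusions in the hypothesis match exactly what the lemmas require), the disposal by reflection of the mirror-image base graphs $(r(A(G_{2k})),g)$ of Corollary \ref{coro2.1}(2) that are not covered verbatim by Lemma \ref{lem4.5}, and the verification that the case $n\geqslant 1$, $h(G)=1$ is vacuous.
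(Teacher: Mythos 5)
Your proof is correct and follows essentially the same route as the paper, whose own argument is simply the observation that the theorem follows from Corollary \ref{coro2.1}(2) together with Lemmas \ref{lem4.1}, \ref{lemma4.2}, \ref{lem4.5}, \ref{lem4.6} and \ref{lem4.8}. Your write-up merely makes explicit the bookkeeping the paper leaves implicit (the vacuity of the case $n\geqslant 1$, $h(G)=1$, the reflection handling of the base graphs $(r(A(G_{2k})),g)$, and the location of the exceptional graphs $(P,g)$ and $(Q,g)$), all of which checks out.
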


\begin{proof}
Theorem \ref{theo4.1} follows from Corollary \ref{coro2.1}(2) and from Lemmas \ref{lem4.1}, \ref{lemma4.2}, \ref{lem4.5}, \ref{lem4.6} and \ref{lem4.8}.
\qed \end{proof}

\section*{References}

\end{document}